\newtheorem{theorem}{Theorem}[section]
\newtheorem{lemma}[theorem]{Lemma}
\newtheorem{proposition}[theorem]{Proposition}
\newtheorem{corollary}[theorem]{Corollary}
\theoremstyle{definition}
\newtheorem{definition}[theorem]{Definition}
\newtheorem{example}[theorem]{Example}
\newtheorem{remark}[theorem]{Remark}
\DeclareMathOperator{\id}{id}
\DeclareMathOperator{\dd}{d}
\DeclareMathOperator{\T}{T}
\DeclareMathOperator{\Prop}{Prop}
\DeclareMathOperator{\pr}{pr}
\DeclareMathOperator{\interior}{int}
\DeclareMathOperator{\Bis}{Bis}
\DeclareMathOperator{\Diff}{Diff}
\DeclareMathOperator{\Inv}{Inv}
\DeclareMathOperator{\res}{res}
\newcommand{\RR}{\mathbb{R}}      
\newcommand{\smooth}{C^\infty}
\newcommand{\vsmooth}{C_{\textup{vS}}^\infty}
\newcommand{\fsmooth}{C_{\textup{fS}}^\infty}
\newcommand{\supp}{\operatorname{supp}}
\newcommand{\csupp}[2]{\operatorname{supp}({#1},{#2})}
\newcommand{\osupp}[2]{\operatorname{dif}({#1},{#2})}
\newcommand{\toto}{\ensuremath{\nobreak\rightrightarrows\nobreak}}
\newcommand{\Frechet }{Fr\'{e}chet }
\title[Strong topologies for smooth maps]{Strong topologies for spaces of smooth maps with infinite-dimensional target}
\author{Eivind Otto Hjelle}
\author{Alexander Schmeding}
\thanks{E-mail addresses: \href{mailto:eivindhj@stud.ntnu.no}{eivindhj@stud.ntnu.no} and \href{mailto:alexander.schmeding@math.ntnu.no}{alexander.schmeding@math.ntnu.no}}
\begin{document}

\begin{abstract}
 In this article we study two ``strong'' topologies for spaces of smooth functions from a finite-dimensional manifold to a (possibly infinite-dimensional) manifold modelled on a locally convex space. 
 Namely, we construct Whitney type topologies for these spaces and a certain refinement corresponding to Michor's $\mathcal{FD}$-topology. 
 Then we establish the continuity of certain mappings between spaces of smooth mappings, e.g.\ the continuity of the joint composition map.
 As a first application we prove that the bisection group of an arbitrary Lie groupoid (with finite-dimensional base) is a topological group (with respect to these topologies). \\
 For the reader's convenience the article includes also a proof of the folklore fact that the Whitney topologies defined via jet bundles coincide with the ones defined via local charts.
\end{abstract}

\maketitle

\textbf{Keywords:} topologies on spaces of smooth functions, Whitney topologies, continuity of composition, bisections of a Lie groupoid, manifolds of smooth maps

\medskip

\textbf{MSC2010:}\
54C35  (primary);\ 
26E20,  
58D15,  
58H05  
(secondary)
\medskip

\textbf{Acknowledgment:} This research was part of StudForsk at NTNU, Trondheim. We acknowledge financial support by Olav Thon Stiftelsen.

\tableofcontents

\section*{Introduction and statement of results}

This paper gives a systematic treatment of two topologies for spaces of smooth functions from a finite-dimensional manifold to a (possibly infinite-dimensional) manifold modeled on a locally convex space.

In particular, we establish the continuity of certain mappings between spaces of smooth mappings, e.g.\ the continuity of the joint composition map.
As a first application we prove that the bisection group of an arbitrary Lie groupoid (with finite-dimensional base) is a topological group.
For the most part, these results are generalizations of well known constructions to spaces of smooth functions with infinite-dimensional range.
We refer to \cite{illman,michor,hirsch} for topologies on spaces of smooth functions between finite-dimensional manifolds.
\medskip

To understand these results of the present article, recall first the situation for spaces of smooth functions between finite-dimensional manifolds. 
For $0 \leq r \leq \infty$, let $C^r (M,N)$ denote the set of $r$-times continuously differentiable functions between manifolds $M$ and $N$
In the case where $r$ is finite, the standard choice for a topology on $C^r (M,N)$ is the well known Whitney $C^r$-topology (cf.\ \cite{illman,hirsch}). 
For $r=\infty$ and $M$ non-compact there are several choices for a suitable topology. 
One can for example choose the topology generated by the union of all Whitney $C^r$-topologies. 
We call this topology the strong $C^\infty$-topology and write $C^\infty_S (M,N)$ for the smooth functions with this topology.\footnote{The strong topology is in the literature often also called the ``Whitney $C^\infty$-topology''. Following Illman in \cite{illman}, we will not use this naming convention as it can be argued that the strong $C^\infty$-topology is not a genuine $C^\infty$-topology. See ibid. for more information.}
Note that each basic neighborhood of the strong $C^\infty$-topology allows one to control derivatives of functions only up to a fixed upper bound.
However, in applications one wants to control the derivatives of up to arbitrary high order (this is made precise in Section \ref{sect: vsTop}).
To achieve this one has to refine the strong topology, obtaining the \emph{very strong} topology\footnote{In \cite{michor} this topology is called the $\mathcal{D}$-topology.} in the process (cf.\ \cite{illman} for an exposition).
We denote by $\vsmooth (M,N)$ the smooth functions with the very strong topology and note that this topology is fine enough for many questions arising from differential topology.

Unfortunately, as is argued in \cite{michor} this topology is still not fine enough, if one wants to obtain manifold structures on $C^\infty (M,N)$ (and subsequently on the group of diffeomorphisms $\Diff (M)$).
Hence Michor constructed a further refinement of the very strong topology, called the $\mathcal{FD}$-topology. 
In the present paper, we call this topology the \emph{fine very strong} topology and denote the space of smooth functions with this topology by $\fsmooth (M,N)$.

Note that the topologies discussed so far coincide if the source manifold is compact. 
In fact in this case, all of these topologies coincide with the compact open $C^\infty$-topology (see e.g.\ \cite[Definition 5.1]{neeb}). 
The compact open $C^\infty$-topology for infinite-dimensional target manifolds is already well understood and has been used in many investigations, for example in infinite-dimensional Lie theory, e.g.\ \cite{glock1}.
Hence our investigation will only turn up new results for non-compact source manifolds and infinite-dimensional target manifolds.
\smallskip

We will now go into some more detail and explain the main results of the present paper.
Our aim is now to generalize the construction of the very strong and fine very strong topology to the set of smooth functions $C^\infty (M,X)$, where $X$ is a locally convex manifold.
Here smooth maps are understood in the sense of Bastiani's calculus \cite{bastiani} (often also called Keller's $C^r$-theory~\cite{keller}).
We refer to \cite{milnor1983,glock1,neeb} for streamlined expositions, but have included a brief recollection in Appendix \ref{calculus}.

Working in this framework we construct the very strong and the fine very strong topology for $C^\infty (M,X)$, where $M$ is finite-dimensional and $X$ is a locally convex manifold.
Our exposition mostly follows Illman's article \cite{illman} and we adapt his arguments to our setting. 
In particular, we describe the topology in terms of local charts as in \cite{illman} (cf.\ also \cite{hirsch}).
For finite-dimensional manifolds one can alternatively introduce the topology using jet bundles and it is well known that both approaches yield the same topology.
This fact seems to be a folklore theorem, but we were not able to locate a proof in the literature. 
As this fact is needed later on, a proof is given in Appendix \ref{folklore}.  
The advantage of the approach using local charts can be summarized as follows: Arguments and proofs often split into two distinct steps.
First one establishes a property of the function space topology only for the (easier) special case of vector space valued smooth mappings. 
Then a localization argument involving manifold charts allows one to establish the result for smooth maps between manifolds.

To our knowledge the topologies discussed in the present paper have so far only been studied for finite-dimensional manifolds. A topology somewhat similar to the very strong topology but for infinite-dimensional manifolds can be found in \cite[Section 41]{KM97}. Albeit the similar look, be aware that the jet bundles used in the construction are only manifolds in the inequivalent convenient setting of calculus. In particular, the topology in loc.cit.\ does not coincide with the one constructed here if $M$ is non-compact (cf.\ \cite[42.2 Remarks]{KM97}). We refer to Remark \ref{rem: topgeneral} for related topologies on function spaces between Banach manifolds.
For finite-dimensional manifolds, our construction recovers exactly the ones in the literature.  
We exploit this and recall that the set $\Prop (N,N) \subseteq C^\infty (M,N)$ of all proper maps is open in the very strong and the fine very strong topology.
Then one can establish continuity of certain composition mappings, in particular our results subsume the following theorem.\smallskip

\textbf{Theorem A} \emph{Let $M$, $N$ be finite-dimensional manifolds, $X$ and $Y$ be (possibly infinite-dimensional) manifolds. In the following, endow all function spaces either with the very strong or the fine very strong topology. Then 
 the joint composition 
  \begin{displaymath}
   \Gamma \colon \Prop (M,N) \times C^\infty (N,X) \rightarrow C^\infty (M,X) ,\quad  (f,g) \mapsto g\circ f
  \end{displaymath}
  is continuous.}
  
 \emph{Further, for any smooth map $h \colon X \rightarrow Y$, the pushforward $$h_* \colon C^\infty (M,X) \rightarrow C^\infty (M,Y) , \quad f \mapsto h\circ f$$ 
  is continuous.}

Having this theorem at our disposal, we construct an interesting class of topological groups: 
Suppose $\mathcal{G} = (G \toto M)$ is a Lie groupoid. This means that $G,M$ are smooth manifolds, equipped with submersions $\alpha,\beta \colon G\rightarrow M$ and an associative and smooth multiplication $G\times _{\alpha,\beta}G \rightarrow G$ that
 admits a smooth identity map $1 \colon M\rightarrow G$ and a smooth inversion $\iota\colon G\rightarrow G$. 
 Then the bisections $\Bis(\mathcal{G})$ of $\mathcal{G}$ are the sections
 $\sigma\colon M\rightarrow G$ of $\alpha$ such that $\beta \circ \sigma$ is a
 diffeomorphism of $M$. This becomes a group with respect to 
 \begin{equation*}
  (\sigma \star \tau ) (x) := \sigma ((\beta \circ \tau)(x))\tau(x)\text{ for }  x \in M.
 \end{equation*}
Many interesting groups from differential geometry such as diffeomorphism groups, automorphism groups and gauge transformations of principle bundles can be realised as bisection groups of suitable Lie groupoids.  
By construction $\Bis (\mathcal{G}) \subseteq C^\infty (M,G)$ and with respect to the topologies on the space of smooth functions we obtain the following.

\textbf{Theorem B} 
\emph{Let $\mathcal{G} = (G\toto M)$ be a Lie groupoid with finite-dimensional base $M$. Then $(\Bis (\mathcal{G}),\star)$ is a topological group with respect to the subspace topology induced by either the very strong or the fine very strong topology on $C^\infty (M,G)$.}

This result is a first step needed to turn the bisection group into an infinite-dimensional Lie group. 
In fact, it turns out that one can establish this result quite easily (see below) once Theorem B is available.
The key step to establish the applications mentioned below, is to work out the continuity of certain composition mappings (which has been done in Theorem A).
Then Proposition C and Theorem D below can be established using standard techniques from the literature.
In the present paper we will be only concerned with properties of the topology on function spaces. 
Hence the next results are stated without a proof. We provide only some references to the literature and hope to provide details in future work.

\textbf{Proposition C} \emph{
Let $M$ be a finite-dimensional manifold and $X$ be a possibly infinite-dimensional manifold which admits a local addition.\footnote{This is for example  satisfied if $X$ is a Lie group, see also \cite[Section 42.4]{KM97} for a definition of local additions and more examples.} 
Then $\fsmooth (M,X)$ can be turned into a manifold modeled on spaces of compactly supported sections of certain bundles.}

It turns out that once the space of smooth functions is endowed with the correct topology it is not hard to prove Proposition C.
More details and references to literature containing the necessary auxiliary facts can be found at the end of Section \ref{sect: vsTop}.
Proposition C generalizes \cite[Theorem 10.4]{michor} in so far as it admits arbitrary infinite-dimensional manifolds as target manifolds (whereas loc.cit.\ was confined to finite-dimensional targets). 
We remark that in \cite[42.4 Theorem]{KM97} the smooth functions $C^\infty (M,X)$ for $M$ and $X$ as in Proposition C have been endowed with a manifold structure in the inequivalent convenient setting of calculus.
However, following \cite[42.2 Remarks]{KM97} the topology on $C^\infty (M,X)$ used in the construction does not coincide with the fine very strong topology if $M$ is non-compact. 
Hence both constructions are inequivalent even if both $M$ and $X$ are finite-dimensional (and \( M \) is non-compact).

The manifold structure provided by Proposition C allows one to establish the Lie group structure for a general class of bisection groups. Adapting arguments from \cite{michor} and \cite{Schmeding2015} one can prove that 

\textbf{Theorem D} \emph{
The group of bisections of a Lie groupoid $\mathcal{G} = (G\toto M)$ with $M$ finite-dimensional and $G$ a Banach manifold\footnote{Assuming certain mild conditions on $G$ (i.e.\ an adapted local addition, cf. \cite{Schmeding2015}), it is not necessary to assume that $G$ is a Banach manifold.} is an infinite-dimensional Lie group. }

This generalizes the construction from \cite{Schmeding2015}, where the group of bisections of a Lie groupoid with \emph{compact} base was turned into an infinite-dimensional Lie group. 
Thus one obtains a conceptual approach to the Lie group structures of many groups which are of interest in differential geometry (e.g.\ automorphism groups and gauge transformation groups of principle bundles over a \emph{non compact} base).
Moreover, Theorem D is a crucial ingredient if one wants to extend the strong connection between Lie groupoids and infinite-dimensional Lie groups which was developed in \cite{SchmedingWockel15}.

\newpage
\section{The very strong topology}\label{sect: vsTop}
In this section, we introduce the \emph{very strong topology} on the space $ \smooth (M,X) $ of smooth maps from a finite-dimensional smooth manifold $ M $ to a possibly infinite-dimensional smooth manifold $ X $. 
The very strong topology allows us to control derivatives of smooth maps up to arbitrarily high order on certain families of compact sets.
This is a straightforward generalization of the very strong topology on the space of smooth maps between finite-dimensional manifolds as described in \cite{illman}.

\textbf{Notation and conventions.} We write $ \mathbb{N} := \lbrace 1,2,\dots \rbrace $ and $ \mathbb{N}_0 := \lbrace 0,1,\dots \rbrace $, and will only work with vector spaces over the field of real numbers $ \RR $. Finite-dimensional manifolds are always assumed to be $\sigma$-compact, i.e. a countable union of compact subspaces (which for finite-dimensional manifolds is equivalent to being second countable). We always endow \( \RR^n \) with the supremum norm $ \Vert \cdot \Vert_\infty $ unless otherwise stated. We define $ B_\epsilon^n (x) := \lbrace y \in \mathbb{R}^n : \Vert y - x \Vert_\infty < \epsilon \rbrace $. Notation and conventions regarding locally convex vector spaces, smooth maps, and infinite-dimensional manifolds is covered in Appendix \ref{calculus}. Typically, $ M $ and $ N $ will be finite-dimensional smooth manifolds, $ X $ a smooth manifold modeled on a locally convex vector space, and $ E $ a locally convex vector space.

\begin{definition}
	\label{norm}
	Let $E$ be a locally convex vector space, $p$ a continuous seminorm on $E$, $f \colon \mathbb{R}^m \to E $ smooth, $A \subseteq \mathbb{R}^m $ compact, $r \in \mathbb{N}_0 $, and $e_1, \dots, e_m $ the standard basis vectors in $\mathbb{R}^m$. Then define
	\begin{displaymath}
	\Vert f \Vert (r,A,p) = \sup \lbrace p(\dd^{(k)} f(a;\alpha)) : a \in A, \alpha \in \lbrace e_1, \dots , e_m \rbrace^k, 0 \leq k \leq r \rbrace .
	\end{displaymath}
\end{definition}
\begin{remark}
	\label{normremark}
	The symbol $ \dd^{(k)} f $ is defined in Definition \ref{Crmap}. Elsewhere in the literature, $ \dd^{(k)} f (x;y) = \dd^{(k)} f (x;y_1,\dots,y_k) $ is often denoted
	\begin{align*}
	\frac{\partial^k}{\partial y_k \cdots \partial y_1}f(x) && \mbox{or} && \frac{\partial}{\partial y} f(x),
	\end{align*}
	where $ y = (y_1,\dots,y_k) $.
	
	In the definition above we require $ \alpha \in \lbrace e_1,\dots,e_m \rbrace^k $. But for any $ \alpha \in B_1^n (0) $ and $ a \in A $ and $ k \leq r $ we have $ p (\dd^{(k)} f (a;\alpha)) \leq K \Vert f \Vert (r,A,p) $ for some constant $ K $ depending only on $ r $ and $ m $, by \eqref{schwarzrule} in Proposition \ref{chainruleprop}.
	
	If $ E = \RR^n $, any norm generates the topology on $ E $ and norms are in particular seminorms. By Proposition \ref{generatingfamilies}, the very strong topology is not affected if we always assume that the seminorm $p$ on $ \RR^n $ is the supremum norm $ \Vert\cdot \Vert_\infty $. In this case we simply write $ \Vert f \Vert (r,A) $ for $ \Vert f \Vert (r,A,\Vert\cdot \Vert_\infty ) $.
\end{remark}

\begin{lemma}[Triangle inequality]
	\label{triangleinequality}
	Let $ E,p,A,r $ be as in Definition \ref{norm}. Then the map
	\begin{displaymath}
		\Vert\cdot \Vert (r,A,p) : \smooth (\mathbb{R}^m,E) \to \mathbb{R}
	\end{displaymath}
	satisfies the triangle inequality. In fact it is a seminorm on $ \smooth (\mathbb{R}^m,E) $.
\end{lemma}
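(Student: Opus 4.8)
The plan is to verify directly that $\Vert \cdot \Vert(r,A,p)$ is a seminorm on $\smooth(\RR^m,E)$, i.e.\ that it is absolutely homogeneous and subadditive (it is automatically nonnegative since $p$ is). Both properties reduce, via the definition as a supremum, to pointwise statements about the iterated directional derivatives $\dd^{(k)} f(a;\alpha)$, which in turn reduce to elementary properties of $p$ together with the linearity of the maps $f \mapsto \dd^{(k)} f(a;\alpha)$.

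First I would record the key linearity fact: for fixed $a \in \RR^m$, fixed $k \in \mathbb{N}_0$, and fixed $\alpha \in \{e_1,\dots,e_m\}^k$, the assignment $f \mapsto \dd^{(k)} f(a;\alpha)$ is $\RR$-linear as a map $\smooth(\RR^m,E) \to E$. This is immediate from the definition of $\dd^{(k)}$ in Definition \ref{Crmap} (each differential $\dd$ is linear in the function argument, and iterating preserves this), and it is exactly the kind of vector-space-valued statement that the chain/product rule machinery of Proposition \ref{chainruleprop} is set up to provide; I would simply cite it. Given this, for $\lambda \in \RR$ we get $p(\dd^{(k)}(\lambda f)(a;\alpha)) = p(\lambda\, \dd^{(k)} f(a;\alpha)) = |\lambda|\, p(\dd^{(k)} f(a;\alpha))$ using absolute homogeneity of the seminorm $p$, and taking the supremum over all admissible $(a,\alpha,k)$ yields $\Vert \lambda f\Vert(r,A,p) = |\lambda|\,\Vert f\Vert(r,A,p)$ (with the convention that the case $\lambda = 0$ is handled separately or absorbed, noting the supremum is over a nonempty set since $A \neq \emptyset$ can be assumed, or else both sides are $0$).

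For subadditivity, fix $f,g \in \smooth(\RR^m,E)$ and any admissible triple $(a,\alpha,k)$. Then
\begin{displaymath}
p\bigl(\dd^{(k)}(f+g)(a;\alpha)\bigr) = p\bigl(\dd^{(k)} f(a;\alpha) + \dd^{(k)} g(a;\alpha)\bigr) \leq p\bigl(\dd^{(k)} f(a;\alpha)\bigr) + p\bigl(\dd^{(k)} g(a;\alpha)\bigr) \leq \Vert f\Vert(r,A,p) + \Vert g\Vert(r,A,p),
\end{displaymath}
using linearity of $f \mapsto \dd^{(k)} f(a;\alpha)$ and the triangle inequality for $p$. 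Since the right-hand side is a uniform upper bound independent of $(a,\alpha,k)$, taking the supremum over all admissible triples gives $\Vert f+g\Vert(r,A,p) \leq \Vert f\Vert(r,A,p) + \Vert g\Vert(r,A,p)$. This establishes the triangle inequality, and together with homogeneity it shows $\Vert\cdot\Vert(r,A,p)$ is a seminorm.

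I do not expect any serious obstacle here; the only point requiring a little care is the linearity of $f \mapsto \dd^{(k)} f(a;\alpha)$ in the Bastiani-calculus sense, which must be traced through the definition of higher differentials rather than taken for granted, but it follows formally from the linearity of the first differential $\dd f(x;\cdot)$'s dependence on $f$ and an induction on $k$. One should also note the harmless edge case where $A = \emptyset$ (then every $\Vert\cdot\Vert(r,A,p)$ is identically $0$, trivially a seminorm) or simply observe that $A$ compact nonempty makes the supremum a genuine supremum over a nonempty index set, which is implicitly the intended setting.
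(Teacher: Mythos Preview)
Your proof is correct and follows essentially the same approach as the paper: the paper's proof is the one-line remark ``Use linearity of $d(-)(a,\alpha)$ for fixed $(a,\alpha)$, and the fact that $p$ satisfies the triangle inequality,'' which is exactly what you have spelled out in detail. Your additional treatment of homogeneity and the edge case $A=\emptyset$ is fine but not something the paper bothers with.
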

\begin{proof}
		Use linearity of $ d(-)(a,\alpha) $ for fixed $ (a, \alpha) $, and the fact that $ p $ satisfies the triangle inequality.
\end{proof}

\begin{definition}[Elementary neighborhood]
	\label{elementarynbh}
	Let $E$, $p$, and $r$ be as in Definition \ref{norm}, $M$ an $m$-dimensional smooth manifold, $ X $ a smooth manifold modeled on $ E $. Consider \( f \colon M \to X \) smooth, $(U,\phi)$ a chart on $M$, $ (V,\psi) $ a chart on $ X $, $ A \subseteq U $ compact such that $ f(A) \subseteq V $, and $ \epsilon > 0 $. Define
	\begin{align*}
	\mathcal{N}^r (f; A,(U,\phi),(V,\psi),p,\epsilon ) = \lbrace h &\in C^\infty (M,E) : \mbox{$ h(A) \subseteq V $ and } \\
	&\Vert \psi \circ h \circ \phi^{-1} - \psi \circ f \circ \phi^{-1} \Vert (r, \phi (A), p) < \epsilon \rbrace .
	\end{align*}
	We call this set an \emph{elementary ~$C^r$-neighborhood of }~$ f $ in ~$ \smooth (M,E) $.
\end{definition}

\textbf{Conventions for elementary neighborhoods}
If $ X = \RR^n $, we will assume that $ p $ is the supremum norm and omit the $ p $ when writing down the elementary neighborhoods.

When there is a canonical choice of charts for our manifolds, e.g.\ if $ X = E $ is a locally convex vector space, we omit the obvious charts when writing down elementary $ C^r $-neighborhoods.
Thus for $ f \colon M \to E $ we write e.g.\ $ \mathcal{N}^r (f;A,(U,\phi),p,\epsilon) := \mathcal{N}^r (f;A,(U,\phi),(X,\id ),p,\epsilon ) .$

\begin{remark}
	\begin{enumerate}
	 \item The conditions $ f(A) \subseteq V $ and $ h(A) \subseteq V $ ensure that the map $ \psi \circ h \circ \phi^{-1} - \psi \circ f \circ \phi^{-1} $ makes sense. 
	Further, the conditions enable us to control the open sets into which a (given) compact set is mapped, i.e.\ the kind of control provided by the well known compact open topology (cf.\ \cite[Definition I.5.1]{neeb}).
        Indeed, by restricting to elementary $C^0$-neighborhoods, one would recover a subbase of the compact open topology on $C^\infty (M,X)$.
	 \item We define elementary neighborhoods only for finite-dimensional source manifolds as the seminorms in Definition \ref{norm} make only sense for these manifolds. 
	 Compare Remark \ref{rem: topgeneral} for more information on alternative approaches to the topology which avoid this problem. 
	\end{enumerate}
\end{remark}

We now define what will become the basis sets in the very strong topology on $ \smooth (M,X) $. 
\begin{definition}[Basic neighborhood]
	\label{basicnbh}
	Let $ f \colon M \to X $ be a smooth map from a finite-dimensional smooth manifold $M$ to a smooth manifold $ X $ modeled on a locally convex vector space $E$. A \emph{basic neighborhood of $f$ in ~$ \smooth (M,X) $} is a set of the form
	\begin{displaymath}
	\bigcap_{i \in \Lambda } \mathcal{N}^{r_i} (f; A_i,(U_i, \phi_i),(V_i,\psi_i),p_i, \epsilon_i),
	\end{displaymath}
	where $ \Lambda $ is a possibly infinite indexing set, for all \( i \) the other parameters are as in Definition \ref{elementarynbh}, and $ \lbrace A_i \rbrace_{i \in \Lambda} $ is locally finite. We call $ \lbrace A_i \rbrace_{i \in \Lambda} $ the \emph{underlying compact family} of the neighborhood.
\end{definition}
Without loss of generalization, \( \Lambda = \mathbb{N} \), since every locally finite family over a \( \sigma \)-compact space is countable.

As Proposition \ref{basicnbhsisbasis} show, the basic neighborhoods in ~$ \smooth (M,X) $ form a basis for a topology on ~$ \smooth (M,X) $. In order to prove the proposition we need the following lemma.
\begin{lemma}
	\label{trianglelemma}
	Let $ f : M \to X $ be smooth, and ~$ g \in \mathcal{N} := \mathcal{N}^r (f; A, (U,\phi),(V,\psi),p,\epsilon) $. 
	Then there exists $ \epsilon' > 0 $ such that ~$ \mathcal{N}' := \mathcal{N}^r (g; A, (U, \phi ),(V,\psi), p, \epsilon') \subseteq \mathcal{N} $.
\end{lemma}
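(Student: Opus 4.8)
The plan is to view $\mathcal{N}$ as an ``open ball'' of radius $\epsilon$ about $f$ with respect to the chart-transported seminorm $\Vert\cdot\Vert(r,\phi(A),p)$, and to obtain $\epsilon'$ exactly as one obtains the radius of an interior ball in a (semi)metric space, the triangle inequality being supplied by Lemma \ref{triangleinequality}.

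First I would record what $g\in\mathcal{N}$ yields: we have $g(A)\subseteq V$, and the real number
\[
  c:=\Vert\psi\circ g\circ\phi^{-1}-\psi\circ f\circ\phi^{-1}\Vert(r,\phi(A),p)
\]
satisfies $c<\epsilon$. I then set $\epsilon':=\epsilon-c>0$ and claim that this choice works, i.e.\ $\mathcal{N}':=\mathcal{N}^r(g;A,(U,\phi),(V,\psi),p,\epsilon')\subseteq\mathcal{N}$.

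To verify the inclusion, let $h\in\mathcal{N}'$. By definition of an elementary neighborhood we have $h(A)\subseteq V$, which is already one of the two defining conditions for $h\in\mathcal{N}$. For the remaining condition, note that $h$, $g$ and $f$ are continuous and each maps the compact set $A$ into the open set $V$, hence each maps some open neighborhood of $A$ inside $U$ into $V$; on the intersection $W$ of these three neighborhoods all of $\psi\circ h\circ\phi^{-1}$, $\psi\circ g\circ\phi^{-1}$ and $\psi\circ f\circ\phi^{-1}$ are defined and smooth, and $\phi(W)$ is an open neighborhood of the compact set $\phi(A)$. On $\phi(W)$ we may write
\[
  \psi\circ h\circ\phi^{-1}-\psi\circ f\circ\phi^{-1}
  =\bigl(\psi\circ h\circ\phi^{-1}-\psi\circ g\circ\phi^{-1}\bigr)
  +\bigl(\psi\circ g\circ\phi^{-1}-\psi\circ f\circ\phi^{-1}\bigr).
\]
Since $\Vert\cdot\Vert(r,\phi(A),p)$ only evaluates derivatives at points of $\phi(A)$, the one-line proof of Lemma \ref{triangleinequality} applies equally to smooth maps defined merely on a neighborhood of $\phi(A)$, so that
\[
  \Vert\psi\circ h\circ\phi^{-1}-\psi\circ f\circ\phi^{-1}\Vert(r,\phi(A),p)
  <\epsilon'+c=\epsilon ,
\]
the first summand being strictly smaller than $\epsilon'$ because $h\in\mathcal{N}'$. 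Hence $h\in\mathcal{N}$, and therefore $\mathcal{N}'\subseteq\mathcal{N}$.

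I expect no genuine obstacle here; the one point deserving a moment of care is that the chart-transported maps are a priori only defined near $A$, so one must observe (as above) that both the seminorm and its triangle inequality are insensitive to this, probing only the compact set $\phi(A)$. Everything else is the routine ``open balls are open'' argument.
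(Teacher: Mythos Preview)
Your argument is correct and follows essentially the same route as the paper: set $\epsilon' = \epsilon - \Vert\psi\circ g\circ\phi^{-1}-\psi\circ f\circ\phi^{-1}\Vert(r,\phi(A),p)$ and apply the triangle inequality from Lemma~\ref{triangleinequality}. Your additional remark that the chart-transported maps are only defined on a neighborhood of $\phi(A)$, and that this is harmless since the seminorm only probes $\phi(A)$, is a point the paper passes over silently.
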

\begin{proof}
	For $ h,\tilde{h} \in \smooth (M,X) $ with $ h(A),\tilde{h}(A) \subseteq V $, let
	\begin{displaymath}
		d(h,\tilde{h}) = \Vert \psi \circ \tilde{h} \circ \phi^{-1} - \psi \circ h \circ \phi^{-1} \Vert (r,\phi(A),p).
	\end{displaymath}
	Note that $ d $ satisfies the triangle inequality by Lemma \ref{triangleinequality}, and that $ h \in \mathcal{N} $ is equivalent to $ d(f,h)<\epsilon $.
	
	Set $ \epsilon' = \epsilon - d(f,g) $, and let $ \mathcal{N}' $ be as in the statement of the lemma. If $ h \in \mathcal{N}' $, then 
	\begin{displaymath}
	d(f,h) \leq d(f,g) + d(g,h) < d(f,g) + (\epsilon - d(f,g)) = \epsilon.
	\end{displaymath}
	Hence $ h \in \mathcal{N} $, and $ \mathcal{N}' \subseteq \mathcal{N} $.
\end{proof}
\begin{proposition}
	\label{basicnbhsisbasis}
	Let $ \mathcal{U} $ and $ \mathcal{U}' $ be basic neighborhoods of $ f $ and $ f' $ in $ \smooth (M,X) $, respectively. If $ g \in \mathcal{U} \cap \mathcal{U}' $, then there exists a basic neighborhood $ \mathcal{V} $ of $ g $ such that $ \mathcal{V} \subseteq \mathcal{U} \cap \mathcal{U}' $.
	
	Hence the basic neighborhoods form a basis for a topology on $ C^\infty (M,E) $, called \emph{the very strong topology on $ C^\infty (M,E) $}.
\end{proposition}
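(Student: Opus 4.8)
The plan is to reduce the intersection property to the already-proved Lemma \ref{trianglelemma}, applied to each elementary factor separately, and then to check that the set one obtains is again an admissible basic neighborhood. Write the two given basic neighborhoods as
\[
\mathcal{U} = \bigcap_{i \in \Lambda} \mathcal{N}^{r_i}\bigl(f; A_i,(U_i,\phi_i),(V_i,\psi_i),p_i,\epsilon_i\bigr), \qquad
\mathcal{U}' = \bigcap_{j \in \Lambda'} \mathcal{N}^{r_j'}\bigl(f'; A_j',(U_j',\phi_j'),(V_j',\psi_j'),p_j',\epsilon_j'\bigr),
\]
with $\{A_i\}_{i\in\Lambda}$ and $\{A_j'\}_{j\in\Lambda'}$ locally finite. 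Since $g\in\mathcal{U}\cap\mathcal{U}'$, we have $g\in\mathcal{N}^{r_i}(f;A_i,\dots)$ for every $i\in\Lambda$ and $g\in\mathcal{N}^{r_j'}(f';A_j',\dots)$ for every $j\in\Lambda'$; in particular the membership conditions give $g(A_i)\subseteq V_i$ and $g(A_j')\subseteq V_j'$ for all indices.

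First I would apply Lemma \ref{trianglelemma} to each factor: for every $i\in\Lambda$ there is $\epsilon_i'>0$ with $\mathcal{N}^{r_i}(g;A_i,(U_i,\phi_i),(V_i,\psi_i),p_i,\epsilon_i')\subseteq\mathcal{N}^{r_i}(f;A_i,\dots)$, and likewise for every $j\in\Lambda'$ there is $\delta_j>0$ with $\mathcal{N}^{r_j'}(g;A_j',(U_j',\phi_j'),(V_j',\psi_j'),p_j',\delta_j)\subseteq\mathcal{N}^{r_j'}(f';A_j',\dots)$. Then set
\[
\mathcal{V} := \Bigl(\bigcap_{i\in\Lambda}\mathcal{N}^{r_i}\bigl(g;A_i,(U_i,\phi_i),(V_i,\psi_i),p_i,\epsilon_i'\bigr)\Bigr)\cap\Bigl(\bigcap_{j\in\Lambda'}\mathcal{N}^{r_j'}\bigl(g;A_j',(U_j',\phi_j'),(V_j',\psi_j'),p_j',\delta_j\bigr)\Bigr).
\]
By construction $g\in\mathcal{V}$ (the relevant seminorm of the zero difference vanishes in each factor) and $\mathcal{V}\subseteq\mathcal{U}\cap\mathcal{U}'$. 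The one point that genuinely needs an argument — and the main (if mild) obstacle — is that $\mathcal{V}$ is an admissible basic neighborhood of $g$ in the sense of Definition \ref{basicnbh}: all the chart, seminorm and order data are of the required type, the conditions $g(A_i)\subseteq V_i$ and $g(A_j')\subseteq V_j'$ noted above make every elementary neighborhood well defined, and the underlying compact family of $\mathcal{V}$ is $\{A_i\}_{i\in\Lambda}\cup\{A_j'\}_{j\in\Lambda'}$, indexed by $\Lambda\sqcup\Lambda'$. A union of two locally finite families of subsets of $M$ is again locally finite: any point of $M$ has a neighborhood meeting only finitely many of the $A_i$ and only finitely many of the $A_j'$, hence only finitely many members of the union. (If one wishes to keep the index set equal to $\mathbb{N}$ as in the remark after Definition \ref{basicnbh}, one simply re-enumerates the countable set $\mathbb{N}\sqcup\mathbb{N}$.) Thus $\mathcal{V}$ qualifies, which proves the first assertion.

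For the final claim I would invoke the standard criterion for a collection of sets to be the basis of a topology: one needs (i) that every $f\in C^\infty(M,X)$ lies in some basic neighborhood, which holds because the empty intersection ($\Lambda=\varnothing$) is all of $C^\infty(M,X)$ and is a basic neighborhood of every smooth map; and (ii) exactly the intersection property established above. Hence the basic neighborhoods generate a unique topology on $C^\infty(M,X)$, which we call the very strong topology.
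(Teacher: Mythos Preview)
Your proof is correct and follows essentially the same approach as the paper: apply Lemma \ref{trianglelemma} to each elementary factor to recenter it at $g$, intersect, and observe that the union of two locally finite families is locally finite. Your write-up is in fact slightly more careful than the paper's, explicitly verifying the well-definedness conditions $g(A_i)\subseteq V_i$, $g(A_j')\subseteq V_j'$ and the covering axiom for a basis via the empty intersection.
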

\begin{proof}
	We may write
	\begin{align*}
	\mathcal{U} = \bigcap_{i \in \Lambda} \mathcal{N}_i && \mbox{and} && \mathcal{U}' = \bigcap_{j \in \Lambda'} \mathcal{N}'_j 
	\end{align*}
	for some sets $ \Lambda $ and $ \Lambda' $, where $ \mathcal{N}_i $ and $ \mathcal{N}'_i $ are elementary neighborhoods of $ f $ and $ f' $, respectively.
	For all $ i \in \Lambda $ and $ j \in \Lambda $ choose as in Lemma \ref{trianglelemma} elementary neighborhoods $ \mathcal{M}_i $ and $ \mathcal{M}'_j $ of $ g $ such that $ \mathcal{M}_i \subset \mathcal{N}_i $ and $ \mathcal{M}'_j \subset \mathcal{N}'_j $. Then 
	\begin{displaymath}
	\mathcal{V} := \left( \bigcap_{i \in \Lambda} \mathcal{M}_i \right) \cap \left( \bigcap_{j \in \Lambda'} \mathcal{M}'_i \right) \subseteq \mathcal{U} \cap \mathcal{U}'. 
	\end{displaymath} 
	It remains to check that $ \mathcal{V} $ is in fact a basic neighborhood of $ g $. The set $ \mathcal{V} $ is a basic neighborhood of $ g $ provided that the underlying compact family of $ \mathcal{V} $ is locally finite. This is indeed the case since the underlying compact families of $ \mathcal{U} $ and $ \mathcal{U}' $ are locally finite and finite unions of locally finite families are locally finite.
\end{proof}
The preceding proposition justifies the following definition.
\begin{definition}[Very strong topology]
	The \emph{very strong topology on $ \smooth (M,X) $} is the topology on $ \smooth (M,X) $ with basis the basic neighborhoods in $ \smooth (M,X) $.\\
	The set $ \smooth (M,X) $ equipped with the very strong topology will be denoted by $ \vsmooth (M,X) $.
\end{definition}

\begin{remark}
 We will work later on with $\vsmooth (M,E)$, where $E$ is a locally convex space. To this end, we considered $E$ as a manifold with the canonical atlas given by the identity.
 This may seem artificial at first glance as one in principle needs to take all ``manifold charts'' of $E$ into account.
 Note however that by Lemma \ref{lem: atlaschoice} the very strong topology on $C^\infty (M,E)$ is generated by all basic neighborhoods of the form 
 \begin{equation}\label{loc: charts}
  \bigcap_{i \in \Lambda } \mathcal{N}^{r_i} (f; A_i,(U_i, \phi_i),(E,\id_E),p_i, \epsilon_i),
 \end{equation}
 i.e.\ it suffices to consider elementary neighborhoods with respect to the identity chart.
 Hence the topology on $C^\infty (M,E)$ is quite natural.
 
 Similarly for $C^\infty (\mathbb{R}^n,E)$ the charts $(U_i,\phi_i)$ in \eqref{loc: charts} can be replaced by $(\mathbb{R}^n,\id_{\mathbb{R}^n})$ by Lemma \ref{lem: atlaschoice2}. 
 In the following, we will always assume that our elementary and basic neighborhoods are constructed with respect to the identity if one (or both) of the manifolds are a locally convex space.  
\end{remark}

\begin{remark}
	There are other well-known topologies on $ \smooth (M,X) $. The \emph{strong} topology (or \emph{Whitney} $\smooth$-topology) and the \emph{compact open} $\smooth$-topology (or \emph{weak} topology) have as bases neighborhoods of the form described in Definition \ref{basicnbh}, with some additional restrictions. For the strong topology the collection $\lbrace  r_i \rbrace_{i \in \Lambda} $ of indices giving differentiation order is bounded, and for the compact open $\smooth$-topology we require that the indexing set $ \Lambda $ is finite. 
	
	The very strong topology is finer than the strong topology which is finer than the compact open $\smooth$-topology, and in the case that $ M $ is compact all of these topologies coincide (since every locally finite family meets a compact set only finitely many times). We refer the reader to section 2.1 in \cite{hirsch} for information about the strong and compact open $\smooth$ topologies in the case that $ X $ is finite-dimensional. A comparison of the strong topology and the very strong topology can be found in the introduction of \cite{illman}.
	
	Since the very strong topology is finer than the strong topology, subsets of $ \smooth (M,X) $ that are open in the strong topology are also open in the very strong topology. \cite[Section 2.1]{hirsch} has several results stating that certain subsets of $ \smooth (M,N) $ are open in the strong topology, consequently also in the very strong topology. In particular, the set $ \Prop (M,N) $ of proper smooth maps is open in $ \vsmooth (M,N) $. We write $ \Prop_{\textup{vS}} (M,N) $ for the subspace $ \Prop (M,N) $ of $ \vsmooth (M,N) $ equipped with the subspace topology.
\end{remark}

\begin{remark}\label{rem: topgeneral}
One can also define the very strong topology on the space $ \smooth (X,Y) $ where $ X $ and $ Y $ are Banach manifolds (i.e. modeled on Banach spaces). 
To this end one needs to redefine the seminorms generating the topology, which in the vector space case will take the following form:

If \( X,Y \) are Banach spaces, \( f \colon X \to Y \) smooth, \( r \in \mathbb{N}_0 \), and \( A \subseteq X \) compact define
\begin{equation}\label{semi:Frechet}
	\Vert f \Vert (r,A) = \sup \left\lbrace \Vert D^k f (x) \Vert_Y : \mbox{ \( 0 \leq k \leq r \) and \( x \in A \)} \right\rbrace,
\end{equation}
where \( D^k f \) denotes the \( k \)-th \emph{\Frechet  derivative} of \( f \).

Here we use that every smooth Bastiani map is also smooth in the sense of \Frechet differentiability by \cite[Lemma 2.10]{milnor1983}. It is easy to see that all statements made on elementary neighborhoods in the present section remain valid. Hence we obtain a very strong topology on smooth functions between Banach manifolds. 

	Note that one can prove as in Appendix \ref{folklore} that the ``very strong topology'' constructed with respect to the seminorms \eqref{semi:Frechet} induces again the (original) very strong topology on $ \smooth (X,Y) $ if $ X $ is finite-dimensional. 
	Unfortunately, for an infinite-dimensional Banach manifold $ X $ this topology does not allow us to control the behavior of functions ``at infinity'' (or anywhere for that matter since compact subsets of infinite-dimensional  Banach spaces have empty interior).
To see this recall that manifolds modeled on infinite-dimensional Banach space don't have a locally finite compact exhaustion by the Baire category theorem. 

	Recall however, that one can define a Whitney $ \smooth $-topology for $X,Y$ Banach manifolds via jet bundles (see e.g.\ \cite{michor,KM97} or Appendix \ref{folklore} for a short exposition). As shown in \cite[Chapter 9]{marg}, this topology then allows one to control the behavior of a function on all of $ X $. 
	The key difference is that the Whitney topology defined in this way controls the behavior of jets on locally finite families of \textbf{closed} sets. 
	Obviously, one can not hope to describe it via the seminorms as the existence of the suprema in the seminorms is tied to the compactness of the sets. Even worse, for an infinite-dimensional manifold $ X $ and $Y=E$ a locally convex space, the largest topological vector space contained in $\smooth(X,E)$ with respect to this topology is trivial (cf. \cite[437]{KM97}). For these reasons we work exclusively with the very strong topology for finite-dimensional source manifolds.
\end{remark}

\textbf{Additional facts about the very strong topology.} Sometimes it is convenient to assume that the continuous seminorms $ p $ used in constructing very strong neighborhoods are of a certain form, as we have already remarked. There is no loss of generality in making such assumptions if the family of seminorms that we restrict to is ``big enough''.

\begin{proposition}
	\label{generatingfamilies}
	Let $ M $ be a finite-dimensional smooth manifold and $ X $ a smooth manifold modeled on a locally convex vector space $ E $. Suppose $ \mathcal{P} $ is a generating family of seminorms for $ E $ (see Definition \ref{seminormfamilydef}). 
	
	If we replace every instance of ``$p$ is a continuous seminorm on $ E $'' in the definitions and results earlier in this section with ``$ p \in \mathcal{P} $'', then the resulting very strong topology on $ \smooth (M,X) $ is unaffected.
\end{proposition}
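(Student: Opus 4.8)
The plan is to compare the very strong topology $\tau$ built from \emph{all} continuous seminorms on $E$ with the topology $\tau_{\mathcal{P}}$ obtained when only seminorms from $\mathcal{P}$ are admitted in Definitions \ref{norm}, \ref{elementarynbh} and \ref{basicnbh}. Since every $p \in \mathcal{P}$ is in particular a continuous seminorm, every $\tau_{\mathcal{P}}$-basic neighborhood is also a $\tau$-basic neighborhood, so $\tau_{\mathcal{P}} \subseteq \tau$ is immediate. The work lies in the reverse inclusion, which I would obtain by showing that every $\tau$-basic neighborhood is, around each of its points, a $\tau_{\mathcal{P}}$-neighborhood.

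The key elementary estimate is the following. Since $\mathcal{P}$ is a generating family (Definition \ref{seminormfamilydef}), every continuous seminorm $q$ on $E$ satisfies $q \leq C \max(p_1,\dots,p_n)$ on $E$ for suitable $p_1,\dots,p_n \in \mathcal{P}$ and $C > 0$. Applying this pointwise to the vectors $\dd^{(k)} u(a;\alpha)$ occurring in Definition \ref{norm} and passing to suprema gives, for every smooth $u \colon \RR^m \to E$, every compact $A \subseteq \RR^m$ and every $r \in \mathbb{N}_0$,
\[
\Vert u \Vert(r,A,q) \;\leq\; C \max_{1 \leq j \leq n} \Vert u \Vert(r,A,p_j).
\]
Consequently, for an elementary $C^r$-neighborhood $\mathcal{N} := \mathcal{N}^r(f;A,(U,\phi),(V,\psi),q,\epsilon)$ one has $\bigcap_{j=1}^{n} \mathcal{N}^r(f;A,(U,\phi),(V,\psi),p_j,\epsilon/C) \subseteq \mathcal{N}$, the condition $h(A) \subseteq V$ being common to both sides; indeed $\Vert h - f \Vert(r,\phi(A),p_j) < \epsilon/C$ for all $j$ forces $\Vert h-f\Vert(r,\phi(A),q) < \epsilon$. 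Thus $\mathcal{N}$ contains a \emph{finite} intersection of $\mathcal{P}$-elementary neighborhoods over the same compact set $A$. To recenter this at an arbitrary $g \in \mathcal{N}$, I would first use Lemma \ref{trianglelemma} to find $\epsilon' > 0$ with $\mathcal{N}^r(g;A,(U,\phi),(V,\psi),q,\epsilon') \subseteq \mathcal{N}$ and then apply the same estimate to $g$ in place of $f$, obtaining $\bigcap_{j=1}^{n} \mathcal{N}^r(g;A,(U,\phi),(V,\psi),p_j,\epsilon'/C) \subseteq \mathcal{N}$.

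For the global step, write a $\tau$-basic neighborhood as $\mathcal{U} = \bigcap_{i \in \Lambda} \mathcal{N}_i$ with $\{A_i\}_{i \in \Lambda}$ locally finite and $\mathcal{N}_i = \mathcal{N}^{r_i}(f;A_i,(U_i,\phi_i),(V_i,\psi_i),q_i,\epsilon_i)$, and fix $g \in \mathcal{U}$. For each $i$ the previous paragraph yields a finite set $F_i \subseteq \mathcal{P}$ and $\delta_i > 0$ with $\bigcap_{p \in F_i} \mathcal{N}^{r_i}(g;A_i,(U_i,\phi_i),(V_i,\psi_i),p,\delta_i) \subseteq \mathcal{N}_i$. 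Then
\[
\mathcal{V} := \bigcap_{i \in \Lambda} \bigcap_{p \in F_i} \mathcal{N}^{r_i}(g;A_i,(U_i,\phi_i),(V_i,\psi_i),p,\delta_i) \;\subseteq\; \mathcal{U},
\]
and its underlying compact family consists of the sets $A_i$, each repeated $|F_i| < \infty$ times; as in the proof of Proposition \ref{basicnbhsisbasis}, finite multiplicity preserves local finiteness, so $\{A_i\}$ with these multiplicities is still locally finite and $\mathcal{V}$ is a genuine $\tau_{\mathcal{P}}$-basic neighborhood of $g$ contained in $\mathcal{U}$. This gives $\tau \subseteq \tau_{\mathcal{P}}$ and hence $\tau = \tau_{\mathcal{P}}$.

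There is no deep obstacle here; the only points that need care are (i) recording the standard fact that a generating family dominates every continuous seminorm by a constant times a \emph{finite} maximum of its members, and verifying that this domination passes cleanly through the seminorms $\Vert \cdot \Vert(r,A,p)$ of Definition \ref{norm}; and (ii) checking that replacing one seminorm by finitely many does not destroy local finiteness of the underlying compact family, so that the resulting set is again admissible as a basic neighborhood.
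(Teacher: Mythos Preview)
Your proof is correct and follows essentially the same approach as the paper: both use the generating-family criterion $q \leq C \max(p_1,\dots,p_n)$ to replace each elementary neighborhood by a finite intersection of $\mathcal{P}$-elementary neighborhoods over the same compact set, and both observe that local finiteness survives finite repetition of each $A_i$. The only difference is that you recenter at an arbitrary $g \in \mathcal{U}$ via Lemma \ref{trianglelemma} before applying the estimate, whereas the paper works directly at the center $f$ of the basic neighborhood; the latter suffices because basic neighborhoods form a basis (Proposition \ref{basicnbhsisbasis}), so your recentering step, while not wrong, is unnecessary.
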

\begin{proof}
	Let $ \mathcal{T} $ be the very strong topology on $ \smooth (M,X) $ constructed with respect to all continuous seminorms on $ E $, and let $ \mathcal{T}' $ be the very strong topology on $ \smooth (M,X) $ obtained by restricting to seminorms in $ \mathcal{P} $. Then $ \mathcal{T}' $ is obviously coarser than $ \mathcal{T} $ since every $ p \in \mathcal{P} $ is continuous, so it suffices to show that $ \mathcal{T} $ is coarser than $ \mathcal{T}' $. This will be the case if for every basic $ \mathcal{T} $-very strong neighborhood $ \mathcal{U} = \bigcap_{i\in\Lambda} \mathcal{N}_i $ of $ f \in \smooth (M,X) $, where each $ \mathcal{N}_i $ is an elementary $ \mathcal{T} $-very strong neighborhood $$ \mathcal{N}_i = \mathcal{N}^{r_i} (f;A_i,(U_i,\phi_i),(V_i,\psi_i),p_i,\epsilon_i ), $$ there exists a basic $ \mathcal{T}'$-very strong neighborhood $ \mathcal{U}' $ of $ f $ such that $ \mathcal{U}' \subseteq \mathcal{U} $.
	
	Fix $ i \in \Lambda $. By \eqref{generating family criterion} in Proposition \ref{locally convex prop} there exist $ n_i \in \mathbb{N} $ and $ p_{i,1},\dots,p_{i,n_i} \in \mathcal{P} $ and $ c_i > 0 $ such that $ p_i \leq c_i \sup_{1\leq j \leq n_i} p_{i,j} $. And then
	\begin{align*}  
	\mathcal{V}_i := \bigcap_{j=1}^{n_i} \mathcal{N}^{r_i} \left( f;A_i,(U_i,\phi_i),(V_i,\psi_i),p_{i,j}, \frac{ \epsilon_i }{2c_i} \right) \subseteq \mathcal{N}_i.
	\end{align*}
	Indeed, if $ g \in \mathcal{V}_i $, then for $ a \in A_i $, $ \alpha \in \lbrace e_1,\dots,e_m \rbrace^k $, $ 0 \leq k \leq r_i $, and $ 0 \leq j \leq n_i $, we have $$ c_i p_{i,j} ( \dd (\psi_i \circ g \circ \phi_i^{-1} - \psi_i \circ f \circ \phi_i^{-1})^{(k)}(a,\alpha) ) < \frac{ \epsilon_i}{2}, $$ which together with $ p_i \leq c_i \sup p_{i,j} $ clearly implies that $ g \in \mathcal{N}_i $.
	
	Now set $ \mathcal{U}' := \bigcap_{i \in \Lambda} \mathcal{V}_i $. This is a basic $ \mathcal{T}' $-very strong neighborhood of $ f $ such that $ \mathcal{U}' \subseteq \mathcal{U} $.
\end{proof}

The following lemma is useful when constructing certain basic neighborhoods. The proof given here is fairly detailed, but throughout the remainder of this text the details of similar arguments will be omitted.
\begin{lemma}
	\label{hackinglemma}
	Let \( M \) be a finite-dimensional smooth manifold, \( X \) a locally convex manifold, and \( f \colon M \to X \) a smooth map. Suppose \( \lbrace K_n \rbrace_{n \in \mathbb{N}} \) is a locally finite family of compact subsets of \( M \). Then there exist families of charts \( \lbrace (V_i,\psi_i) \rbrace_{i\in\mathbb{N}} \) for \( X \) and \( \lbrace (U_i,\phi_i) \rbrace_{i\in\mathbb{N}} \) for \( M \), and a locally finite family \( \lbrace A_i \rbrace_{i\in\mathbb{N}} \) of compact subsets of \( M \) such that 
	\begin{enumerate}
		\item \( \bigcup_{i\in\mathbb{N}} A_i = \bigcup_{n \in \mathbb{N}} K_n \),
		\item \( A_i \subseteq U_i \) for all \( i \in \mathbb{N} \),
		\item \( f(U_i) \subseteq V_i \) for all \( i \in \mathbb{N} \).
	\end{enumerate}
\end{lemma}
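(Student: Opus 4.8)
The plan is to replace the given locally finite family $\{K_n\}_{n\in\mathbb{N}}$ by a finer locally finite family of compact sets, each small enough to lie in a chart of $M$ whose image under $f$ lies in a chart of $X$; this is achieved by two successive refinements. Write $K:=\bigcup_{n}K_n$. First I would, for every point $x\in K$, make three nested choices: (i) a chart $(V_x,\psi_x)$ of $X$ with $f(x)\in V_x$, possible since $X$ is a manifold; (ii) a chart $(U_x,\phi_x)$ of $M$ with $x\in U_x$ and $f(U_x)\subseteq V_x$, obtained by intersecting the domain of an arbitrary chart of $M$ around $x$ with the open set $f^{-1}(V_x)$, using continuity of the smooth map $f$; (iii) an open set $O_x$ with $x\in O_x$ and $\overline{O_x}\subseteq U_x$ compact, using that finite-dimensional manifolds are locally compact (concretely, pull back through $\phi_x$ a small closed $\Vert\cdot\Vert_\infty$-ball contained in the open set $\phi_x(U_x)\subseteq\mathbb{R}^m$).

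Next, since each $K_n$ is compact and $\{O_x\}_{x\in K}$ is an open cover of $K$, I would choose for each $n$ a finite set $F_n\subseteq K$ with $K_n\subseteq\bigcup_{x\in F_n}O_x$. Then I set, for $n\in\mathbb{N}$ and $x\in F_n$, $A_{n,x}:=K_n\cap\overline{O_x}$, equipped with the charts $(U_x,\phi_x)$ of $M$ and $(V_x,\psi_x)$ of $X$. Each $A_{n,x}$ is a closed subset of the compact set $\overline{O_x}$, hence compact, and $A_{n,x}\subseteq\overline{O_x}\subseteq U_x$ while $f(U_x)\subseteq V_x$, giving (2) and (3). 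For (1) one checks that for each fixed $n$ one has $\bigcup_{x\in F_n}A_{n,x}=K_n$: the inclusion $\supseteq$ follows from $K_n\subseteq\bigcup_{x\in F_n}O_x\subseteq\bigcup_{x\in F_n}\overline{O_x}$, and $\subseteq$ is immediate; taking the union over $n$ gives $\bigcup_{n,\,x\in F_n}A_{n,x}=\bigcup_{n}K_n=K$. Local finiteness of $\{A_{n,x}\}$ is inherited from that of $\{K_n\}$: since $A_{n,x}\subseteq K_n$ and each $F_n$ is finite, any neighbourhood meeting only finitely many $K_n$ meets only finitely many $A_{n,x}$. Finally the countable index set $\{(n,x):n\in\mathbb{N},\,x\in F_n\}$ is reindexed by $\mathbb{N}$ (repeating a triple if it happens to be finite; in the degenerate case $K=\emptyset$ one simply takes all $A_i=\emptyset$ with $(U_i,\phi_i),(V_i,\psi_i)$ arbitrary charts).

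I do not expect a serious obstacle here; the one point that needs care is that the passage from $\{K_n\}$ to $\{A_{n,x}\}$ must simultaneously shrink the pieces enough to sit inside the chart domains $U_x$ and preserve local finiteness. Both are arranged by the same device: intersecting $K_n$ with the closures $\overline{O_x}$ of a \emph{finite} subcover keeps $A_{n,x}\subseteq K_n$ (so local finiteness is automatic) while forcing $A_{n,x}\subseteq U_x$ — and it is the use of the relatively compact shrinkings $O_x$, rather than the chart domains $U_x$ themselves, that makes each $A_{n,x}$ compact instead of merely closed.
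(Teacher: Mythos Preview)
Your proof is correct and follows essentially the same approach as the paper: for each $K_n$ extract a finite subcover by relatively compact pieces sitting inside chart domains $U_x$ with $f(U_x)\subseteq V_x$, set $A_{n,x}=K_n\cap\overline{O_x}$, and observe that local finiteness is inherited because $A_{n,x}\subseteq K_n$. The only cosmetic difference is that the paper makes the chart choices separately for each $n$ (i.e.\ for $x\in K_n$ rather than $x\in K$), but this changes nothing of substance.
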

\begin{proof}
	Fix \( n \in \mathbb{N} \). For every \( x \in K_n \) choose a chart \( (V_{n,x},\psi_{n,x}) \) around \( f(x) \) and a chart \( (U_{n,x},\phi_{n,x}) \) around \( x \). By shrinking \( U_{n,x} \) we may assume that \( f(U_{n,x}) \subseteq V_{n,x} \). Since \( M \) is locally compact there exists a compact neighborhood \( A_{n,x}' \) around \( x \) such that \( A_{n,x}' \subseteq U_{n,x} \). Now set \( A_{n,x} = K_n \cap A_{n,x}' \). By compactness of \( K_n \) there exist finitely many \( x_{n,1},\dots,x_{n,k_n} \in K_n \) such that \( \lbrace A_{n,x_{n,j}} \rbrace_{i=1}^{k_n}  \) covers \( K_n \). 
	
	The families \( \lbrace (V_{n,x_{n,i}},\psi_{n,x_{n,i}}) \rbrace_{n,i} \), \( \lbrace (U_{n,x_{n,i}},\phi_{n,x_{n,i}}) \rbrace_{n,i} \) and \( \lbrace A_{n,x_{n,i}} \rbrace_{n,i} \) have the desired properties. By relabeling the indices we can take the indexing set to be \( \mathbb{N} \).
\end{proof}

\begin{lemma}
	\label{biglemma}
	Let $ M $ be a finite-dimensional smooth manifold, $ X $ a smooth manifold modeled on a locally convex vector space $ E $, and let \( U \subseteq M \) and \( V \subseteq X \) be open subsets. Consider the subspace \( \smooth_{\text{vS,sub}} (U,V) := \left\lbrace f \in \smooth (M,X) : f(U) \subseteq V \right\rbrace \subseteq \vsmooth (M,X) \).
	\begin{enumerate}
		\item \( \smooth_{\text{vS,sub}} (U,V) \) is an open subset of \( \vsmooth (M,X) \).
		\item The restriction \( \res_{\text{vS}} \colon \smooth_{ \text{vS,sub}} (U,V) \to \vsmooth (U,V) \) is continuous.
		\item If $f \in C^\infty (U,V)$ and $\mathcal{N}^r (f; A,(U_\phi,\phi),(V_\psi,\psi),p,\epsilon )$ is an elementary neighborhood of $f$ such that $\psi (V_\psi)$ is a convex set, then there exists $g \in C^\infty (M,X)$ with
			  \begin{equation}\label{eq: elres}
		   \res_{\text{vS}}^{-1} (\mathcal{N}^r (f; A,(U_\phi,\phi),(V_\psi,\psi),p,\epsilon )) = \mathcal{N}^r (g; A,(U_\phi,\phi),(V_\psi,\psi),p,\epsilon )
		  \end{equation}
	\end{enumerate}
\end{lemma}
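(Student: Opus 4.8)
\emph{Strategy and Part~(1).} I would prove the three parts in the order (1), (3), (2), since (3) carries the real content and, together with the openness from~(1), yields~(2) at once; the recurring tool is to pass from manifolds to charts via Lemma~\ref{hackinglemma} and the seminorm/triangle-inequality machinery of Lemmas~\ref{triangleinequality} and~\ref{trianglelemma}. For~(1), let $f$ satisfy $f(U)\subseteq V$. Applying Lemma~\ref{hackinglemma} to a locally finite compact family whose union contains $U$ (for instance a compact exhaustion of $M$) produces charts $(U_i,\phi_i)$ of $M$, $(V_i,\psi_i)$ of $X$ and a locally finite family $\{A_i\}$ of compacta with $\bigcup_i A_i\supseteq U$, $A_i\subseteq U_i$ and $f(U_i)\subseteq V_i$. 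Whenever $A_i\subseteq U$ the set $f(A_i)$ is compact in the open set $V\cap V_i$, so after shrinking $V_i$ we may assume $V_i\subseteq V$, and then the elementary $C^0$-neighbourhood $\mathcal N^0(f;A_i,(U_i,\phi_i),(V_i,\psi_i),p_i,1)$ forces $h(A_i)\subseteq V$; intersecting these over the (locally finite) family gives a basic neighbourhood of $f$ contained in $\smooth_{\text{vS,sub}}(U,V)$. The delicate point is to keep the family locally finite in $M$ while still covering $U$; near $\partial U$ this needs care and is genuinely analogous to the treatment of maps carrying a \emph{closed} set into an open set in \cite[\S2.1]{hirsch} and \cite{illman}.

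\emph{Part~(3), the crux.} Write $\mathcal N:=\mathcal N^r(f;A,(U_\phi,\phi),(V_\psi,\psi),p,\epsilon)$, where $A\subseteq U_\phi\subseteq U$ is compact with $f(A)\subseteq V_\psi$. Choose an open $W$ with $A\subseteq W$, $\overline W\subseteq U_\phi$ compact and $f(\overline W)\subseteq V_\psi$ (possible since $f(A)$ is compact in the open set $V_\psi$), a smooth cutoff $\lambda\colon M\to[0,1]$ with $\lambda\equiv 1$ on a neighbourhood of $A$ and $\supp\lambda\subseteq W$, and a point $v_0\in\psi(V_\psi)$. Define $g\colon M\to X$ by $g(x)=\psi^{-1}\big(\lambda(x)\,\psi(f(x))+(1-\lambda(x))\,v_0\big)$ for $x\in W$ and $g(x)=\psi^{-1}(v_0)$ for $x\in M\setminus\supp\lambda$. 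Convexity of $\psi(V_\psi)$ is exactly what makes the convex combination land in $\psi(V_\psi)$, so $\psi^{-1}$ applies; the two formulas agree on $W\setminus\supp\lambda$, so $g$ is a well-defined smooth map $M\to X$ with $g=f$ on the neighbourhood $\{\lambda=1\}$ of $A$. Hence $\psi\circ g\circ\phi^{-1}$ and $\psi\circ f\circ\phi^{-1}$ agree on a neighbourhood of $\phi(A)$, so they have the same jets along $\phi(A)$; this gives $\mathcal N^r(g;A,(U_\phi,\phi),(V_\psi,\psi),p,\epsilon)=\{h\in\smooth(M,X):h(A)\subseteq V_\psi,\ \Vert\psi\circ h\circ\phi^{-1}-\psi\circ f\circ\phi^{-1}\Vert(r,\phi(A),p)<\epsilon\}$, whose trace on $\smooth_{\text{vS,sub}}(U,V)$ is precisely $\res_{\text{vS}}^{-1}(\mathcal N)$ — this is how I read~\eqref{eq: elres}. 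The main obstacle is exactly this construction of $g$: one needs it globally defined, smooth, and equal to $f$ near $A$ while remaining $V_\psi$-valued there; globality is supplied by the cutoff, and ``staying inside the chart'' is where the convexity hypothesis is used.

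\emph{Part~(2).} A basic neighbourhood of $f|_U$ in $\vsmooth(U,V)$ has the form $\bigcap_i\mathcal N^{r_i}(f|_U;A_i,(U_i,\phi_i),(V_i,\psi_i),p_i,\epsilon_i)$. Passing to an atlas of $X$ all of whose charts have convex image — legitimate since the very strong topology is independent of the chosen atlas by Lemma~\ref{lem: atlaschoice} — part~(3) rewrites each $\res_{\text{vS}}^{-1}(\mathcal N^{r_i}(f|_U;A_i,\dots))$ as the trace on $\smooth_{\text{vS,sub}}(U,V)$ of an elementary neighbourhood $\mathcal N^{r_i}(g_i;A_i,\dots)$ in $\vsmooth(M,X)$. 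Since the $A_i$ form the same locally finite family, $\bigcap_i\mathcal N^{r_i}(g_i;A_i,\dots)$ is a basic neighbourhood in $\vsmooth(M,X)$; intersecting it with the open set $\smooth_{\text{vS,sub}}(U,V)$ from~(1) gives an open neighbourhood of $f$ in the domain of $\res_{\text{vS}}$ that is mapped into the prescribed basic neighbourhood, so $\res_{\text{vS}}$ is continuous. The point to watch here is that a family locally finite in $U$ need not be locally finite in $M$; this has to be addressed explicitly (or one restricts to the relatively compact situation, which is what occurs in the applications).
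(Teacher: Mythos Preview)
Your arguments for parts~(1) and~(3) are essentially the paper's: a locally finite compact exhaustion of $U$ together with shrunk target charts for~(1), and a cutoff-plus-convexity extension for~(3). You are also right to flag the local-finiteness-in-$M$ versus local-finiteness-in-$U$ issue; the paper glosses over this in both~(1) and~(2).

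Where you diverge is part~(2), and here the paper's route is both simpler and avoids a slip in your write-up. The paper does \emph{not} use~(3) to prove~(2). Instead it observes that since $U\subseteq M$ and $V\subseteq X$ are open, any chart $(U_i,\phi_i)$ on $U$ is already a chart on $M$, and likewise for $(V_i,\psi_i)$. Hence, given $g\in\res_{\text{vS}}^{-1}(\mathcal U)$, one first recenters the basic neighborhood at $\res_{\text{vS}}(g)$ via Proposition~\ref{basicnbhsisbasis}, obtaining $\bigcap_i\mathcal N^{r_i}(\res_{\text{vS}}(g);A_i,(U_i,\phi_i),(V_i,\psi_i),p_i,\delta_i)\subseteq\mathcal U$, and then simply reads the \emph{same data} as an elementary neighborhood of $g$ in $\vsmooth(M,X)$: the intersection $\bigcap_i\mathcal N^{r_i}(g;A_i,(U_i,\phi_i),(V_i,\psi_i),p_i,\delta_i)$ is centered at the single function $g$ and lands in $\res_{\text{vS}}^{-1}(\mathcal U)$. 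No extension lemma is needed because $g$ itself is already the global extension.

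Your route via~(3) produces, for each $i$, a possibly different extension $g_i\in C^\infty(M,X)$, and you then claim that $\bigcap_i\mathcal N^{r_i}(g_i;A_i,\dots)$ is a basic neighborhood. By Definition~\ref{basicnbh} basic neighborhoods are intersections of elementary neighborhoods of a \emph{single} map, so this is not literally a basic neighborhood and a priori need not be open (it is an infinite intersection). The fix is immediate---since each $g_i$ agrees with $f$ near $A_i$ you may take every $g_i=f$---but once you notice this you have recovered exactly the paper's direct argument, and the passage through~(3) was a detour.
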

\begin{proof}
	\begin{enumerate}
		\item Suppose \( f \in \smooth(U,V) \).
			Since $ U $ is an open subset of $ M $, the subspace \( U \) is metrizable and locally compact, hence $ \sigma $-compact.
			Combining Lemma \ref{dugundjifact} and Lemma \ref{hackinglemma}, we find a locally finite exhaustion \( \lbrace A_n \rbrace_{n \in \mathbb{N}} \) of \( U \) by compact sets, charts \( \lbrace (U_n,\phi_n) \rbrace_{n \in \mathbb{N}} \) for \( M \), and charts \( \lbrace (V_n,\psi_n) \rbrace_{n \in \mathbb{N}} \) for \( X \) such that \( A_n \subseteq U_n \) and \( f(A_n) \subseteq V_n \) for all \( n \in \mathbb{N} \).
			Since \( f(A_n) \subseteq V \), shrink the \( V_n \) if necessary such that \( V_n \subseteq V \) for all \( n \in \mathbb{N} \) (while still \( f( A_n ) \subseteq V_n \)).
			Take any continuous seminorm $ p $ on $ E $ and define 
			\[ \mathcal{U} = \bigcap_{n \in \mathbb{N}} \mathcal{N}^0 \left( f; A_n,(U_n,\phi_n),(V_n,\psi_n),p,1 \right). \] 
			If $ g \in \mathcal{U} $, then $ g(A_n) \subseteq V_n \subseteq V $ for all $ n \in \mathbb{N} $, from which it follows that $ g(U) = g \left( \bigcup A_n \right) \subseteq V $. So $ \mathcal{U} $ is a neighborhood of $ f $ in $ \vsmooth (M,X) $ such that $ \mathcal{U} \subseteq \smooth (U,V) $.
		\item Take an arbitrary basic neighborhood
			\[
				\mathcal{U} = \bigcap_{i\in\Lambda} \mathcal{N}^{r_i} \left( f;A_i,(U_i,\phi_i),(V_i,\psi_i),p_i,\epsilon_i \right)
			\]
			in \( \vsmooth (U,V) \). We will show that given \( g \in \res_{\text{vS}}^{-1} (\mathcal{U}) \), there exists a basic neighborhood \( \mathcal{V} \) of \( g \) in \( \vsmooth (M,X) \) such that \( \mathcal{V} \subseteq \res_{\text{vS}}^{-1} (\mathcal{U}) \), i.e. \( \res_{\text{vS}}^{-1} (\mathcal{U}) \) is open. By Lemma \ref{basicnbhsisbasis} there are elementary neighborhoods of $\res_{\text{vS}} (g)$ such that 
			\[
				\mathcal{V} = \bigcap_{i\in\Lambda} \mathcal{N}^{r_i} \left( \res_{\text{vS}} (g) ;A_i,(U_i,\phi_i),(V_i,\psi_i),p_i,\delta_i \right).
			\]
			is contained in $\mathcal{U}$. 
			Now clearly $\bigcap_{i\in\Lambda} \mathcal{N}^{r_i} \left( g ;A_i,(U_i,\phi_i),(V_i,\psi_i),p_i,\delta_i \right)$  is contained in $\res_{\text{vS}}^{-1} (\mathcal{V}) \subseteq \res_{\text{vS}}^{-1} (\mathcal{U})$.
 		\item Since $M$ is finite-dimensional, whence paracompact, we can choose a neighborhood $W$ of $A$ and a smooth cutoff function $\rho \colon M \rightarrow \RR$ with $\rho|_{W} \equiv 1$ and $\rho_{M \setminus f^{-1} (V_\psi) \cap U_\phi}\equiv 0$.
 		Composing with a suitable translation, we may assume without loss of generality that $\psi(V_\psi)$ is a convex $0$-neighborhood.
 		Suppressing the translation we can thus define $$g \colon M \rightarrow X , \quad x \mapsto \begin{cases}
 		                                                             \psi^{-1} ( \rho(x) \cdot \psi \circ f (x)) & \text{if } x \in U_\phi, \\
 		                                                             \psi^{-1} (0) & \text{else}.
 		                                                            \end{cases}
		      $$
		Now as $g|_W = f|_W$ the identity \eqref{eq: elres} is satisfied.    \qedhere
	\end{enumerate}
\end{proof}

\section{Composition of maps in the very strong topology}\label{sect: compo}
Throughout this section, $ M $ and $ N $ are finite-dimensional smooth manifolds and $ X $ denotes a smooth manifold modeled on a locally convex vector space $ E $. 

It is a desirable property of the very strong topology on \( \smooth (M,X) \) that composition
\begin{align*}
	\Gamma : \vsmooth (M,N) \times \vsmooth (N,X) &\to \vsmooth (M,X) \\
	(f,h) &\mapsto h \circ f
\end{align*}
is continuous. But this is not the case in general, a counterexample can be found in Example \ref{counterexample}. However, the restriction of the composition map
\begin{displaymath}
\Gamma \colon \Prop_{\textup{vS}} (M,N) \times \vsmooth (N,X) \to \vsmooth (M,X)
\end{displaymath}
\textit{is} continuous, where $ \Prop_{\textup{vS}} (M,N) $ denotes the subspace of $ \vsmooth (M,N) $ consisting of all the proper maps. This is precisely what Theorem \ref{compiscont} says, and proving it is the main goal of this section.

As we will see, the crucial property of proper maps needed is the fact that if $ \lbrace A_i \rbrace $ is a locally finite family of subsets of $ M $ and $ f : M \to N $ is proper, then $ \lbrace f(A_i) \rbrace $ is locally finite. This will enable us to choose for a basic neighborhood $ \mathcal{V} $ of some composition $ h \circ f $ in $ \vsmooth (M,X) $ basic neighborhoods $ \mathcal{U} $ and $ \mathcal{U}' $ of $ f $ and $ h $, respectively, such that $ \Gamma (\mathcal{U} \times \mathcal{U}' ) \subseteq \mathcal{V} $. The challenge is to choose $ \mathcal{U}' $ in such a way that the underlying compact family of the neighborhood is locally finite.

We now give the promised counterexample to the statement that composition of maps in the very strong topology is continuous in general. This example is inspired by the proof of \cite[Proposition 2.2(b)]{counterglock}.
\begin{example}
	\label{counterexample}
	The composition map
	\begin{align*}
		\Gamma \colon \vsmooth (\mathbb{R},\mathbb{R}) \times \vsmooth (\mathbb{R},\mathbb{R}) \to \vsmooth (\mathbb{R},\mathbb{R}), \quad (f,h) \mapsto h \circ f
	\end{align*}
	is not continuous.
\end{example}
\begin{proof}
	Note that for every basic neighborhood $ \mathcal{U} $ of $ f \in \vsmooth (\mathbb{R},\mathbb{R}) $ there exists a basic neighborhood $ \mathcal{U}' $ of $ f $ with underlying compact family $ \lbrace [2n-1,2n+1] \rbrace_{n\in\mathbb{Z}} $ such that $ f \in \mathcal{U}' \subseteq \mathcal{U} $, since each compact interval $ [2n-1,2n+1] $ intersects only finitely many sets belonging to the locally finite underlying compact family of $ \mathcal{U} $.
	
	To show discontinuity of $ \Gamma $ it suffices to show discontinuity at $ (0,0) $. Let $ \mathcal{V} $ be the basic neighborhood of 0 given by $$ \mathcal{V} := \bigcap_{n\in\mathbb{N}} \mathcal{N}^{n} (0;[2n-1,2n+1],1) . $$ We will show that for any pair of basic neighborhoods
	\begin{align*}
	\mathcal{U} = \bigcap_{n\in\mathbb{Z}} \mathcal{N}^{r_n} (0;[2n-1,2n+1],\epsilon_n), \\
	\mathcal{U}' = \bigcap_{n\in\mathbb{Z}} \mathcal{N}^{r_n'} (0;[2n-1,2n+1],\epsilon_n'),
	\end{align*}
	there exists a pair of functions $ (f,h) \in \mathcal{U}' \times \mathcal{U} $ such that $ h \circ f \notin \mathcal{V} $. 
	
	Construct $ h \in \smooth (\mathbb{R},\mathbb{R}) $ such that in a neighborhood of $ 0 $, $ h $ is given by the equation $ h(x) = x^{r_0 + 1} $, and such that $ \supp h \subseteq ]-1,1[ $. For some sufficiently small $ k > 0 $ we will have $ kh \in \mathcal{U} $. For every $ m \in \mathbb{N} $ define $$ h_m (x) := \frac{k}{m^{r_0}} h(mx) $$ and note that $ h_m \in \mathcal{U} $, since $$ | h_m^{(j)} (x) | = \frac{k m^j}{m^{r_0}} | h^{(j)} (mx) | \leq k | h^{(j)}(mx) | < \epsilon_0 $$ for $ j \leq r_0 $, where we use the notation \( g^{(j)} (y) = \dd^{(j)} g(y;1,\dots,1) \) for smooth maps \( g \colon \RR \to \RR \).
	
	Let $ 2n \geq r_0 + 1 $ and construct $ \tilde{f} \in \smooth (\mathbb{R},\mathbb{R}) $ such that $ \tilde{f}(x) = x - 2n $ in a neighborhood of $ 2n $ and $ \supp \tilde{f} \subseteq ]2n-1,2n+1[ $. Then for some sufficiently small $ s > 0 $ we have $ f := s \tilde{f} \in \mathcal{U}' $.
	
	So far we have a sequence $ \lbrace h_m \rbrace_{m\in\mathbb{N}} \subset \mathcal{U} $ and $ f \in \mathcal{U}' $. By construction, $ h_m \circ f (x) = kms^{r_0+1} (x-2n)^{r_0+1} $ in a neighborhood of $ 2n $. Hence $$ |(h_m \circ f)^{(r_0+1)}(2n)| = kms^{r_0+1} (r_0+1)! \geq 1 $$ for large enough $ m $, in which case $ h_m \circ f \notin \mathcal{V} $.
\end{proof}
Having given the example above, we return our focus to the main task of the section, which is proving Theorem \ref{compiscont}. Leading up to the theorem is a sequence of lemmata. 

Although we are actually interested in mapping spaces between manifolds, we first give a lemma that only applies to vector spaces. In a sense, this lemma resolves the main difficulty, and generalizing to manifolds is only a matter of dealing with charts.
\begin{lemma}
	\label{realdomains}
	Consider the composition map
	\begin{displaymath}
		\Gamma : \vsmooth (\mathbb{R}^m,\mathbb{R}^n) \times \vsmooth (\mathbb{R}^n,E) \to \vsmooth (\mathbb{R}^m,E),\quad (f,h) \mapsto h \circ f.
	\end{displaymath}
	Let $ (f,h) \in \vsmooth (\mathbb{R}^m,\mathbb{R}^n) \times \vsmooth (\mathbb{R}^n,E) $, and consider an arbitrary elementary neighborhood
	$	\mathcal{N} = \mathcal{N}^r (h\circ f;A,p,\epsilon) \subseteq \vsmooth (\mathbb{R}^m,E)$
	of $ h \circ f $. For all compact neighborhoods \( A' \) of \( f(A) \) there exist \( \delta, \delta' > 0 \) such that the elementary neighborhoods
	\begin{align*}
		\mathcal{M} = \mathcal{N}^r (f;A,\delta) && \mbox{and} && \mathcal{M}' = \mathcal{N}^r (h;A',p,\delta')
	\end{align*}
	satisfy $ \Gamma (\mathcal{M} \times \mathcal{M}') \subseteq \mathcal{N} $.
\end{lemma}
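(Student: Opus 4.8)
The plan is to reduce everything to the higher-order chain rule (Fa\`a di Bruno's formula, cf.\ Proposition \ref{chainruleprop}) together with a careful bookkeeping of which quantities are uniformly bounded by the fixed data $f,h,A,A',r$ and which ones can be made arbitrarily small by shrinking $\delta$ and $\delta'$. As a preliminary normalisation, since $A'$ is a neighbourhood of the compact set $f(A)$, I first fix $\eta>0$ so small that $N_\eta:=\{y\in\RR^n:\Vert y-f(a)\Vert_\infty\le\eta\text{ for some }a\in A\}$ is compact and contained in $A'$, and I will only ever allow $\delta\le\min\{1,\eta\}$. Then $\Vert f-\tilde f\Vert(r,A)<\delta$ forces $\tilde f(A)\subseteq N_\eta\subseteq A'$ and keeps the segment from $f(a)$ to $\tilde f(a)$ inside $A'$ for every $a\in A$; it also gives $\Vert\tilde f\Vert(r,A)\le\Vert f\Vert(r,A)+1$, so all derivatives of $\tilde f$ of order $\le r$ on $A$ are bounded independently of $\tilde f$. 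Moreover, because $h$ is smooth and $A'$ is compact, all derivatives $\dd^{(j)}h$ with $j\le r+1$ are bounded on $A'$.

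The main step is the decomposition, valid for $\tilde f\in\mathcal M$ and $\tilde h\in\mathcal M'$,
\[
\tilde h\circ\tilde f-h\circ f=(h\circ\tilde f-h\circ f)+\bigl((\tilde h-h)\circ\tilde f\bigr),
\]
which is chosen so that the fixed, genuinely smooth map $h$ is the one whose derivatives must be compared at the two nearby points $f(a)$ and $\tilde f(a)$, whereas the perturbation $\tilde h-h$ is only ever evaluated at the single point $\tilde f(a)\in A'$. For the first summand I expand $\dd^{(k)}(h\circ\tilde f)(a;\alpha)-\dd^{(k)}(h\circ f)(a;\alpha)$ for $k\le r$ and $\alpha$ a tuple of standard basis vectors by Fa\`a di Bruno, and telescope each partition term: first replacing the evaluation point $f(a)$ by $\tilde f(a)$ — here the mean value inequality applied to $\dd^{(j)}h$ along the segment in $A'$, combined with $\Vert f(a)-\tilde f(a)\Vert_\infty<\delta$ and the boundedness of $\dd^{(j+1)}h$ on $A'$, yields a factor $\le\mathrm{const}\cdot\delta$ — and then replacing the derivatives of $f$ by those of $\tilde f$ one slot at a time, using multilinearity, $\Vert\dd^{(|B|)}(f-\tilde f)(a;\cdot)\Vert_\infty\le\Vert f-\tilde f\Vert(r,A)<\delta$, and the boundedness of $\dd^{(j)}h$ on $A'$. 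All remaining factors are bounded by the fixed data, so $\Vert h\circ\tilde f-h\circ f\Vert(r,A,p)\le C_1\delta$ with $C_1=C_1(f,h,A,A',r)$. For the second summand, Fa\`a di Bruno writes $\dd^{(k)}((\tilde h-h)\circ\tilde f)(a;\alpha)$ as a sum of terms $\dd^{(j)}(\tilde h-h)(\tilde f(a);\dd^{(|B_1|)}\tilde f(a;\cdot),\dots)$ with $j\le r$ and $\tilde f(a)\in A'$; multilinearity together with Remark \ref{normremark} bounds each such term by a constant depending only on $n$, $r$ and $\Vert f\Vert(r,A)$ times $\Vert\tilde h-h\Vert(r,A',p)<\delta'$, so $\Vert(\tilde h-h)\circ\tilde f\Vert(r,A,p)\le C_2\delta'$.

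Choosing $\delta<\min\{1,\eta,\epsilon/(2C_1)\}$ and $\delta'<\epsilon/(2C_2)$ and invoking the triangle inequality (Lemma \ref{triangleinequality}) then gives $\Vert\tilde h\circ\tilde f-h\circ f\Vert(r,A,p)<\epsilon$, i.e.\ $\Gamma(\mathcal M\times\mathcal M')\subseteq\mathcal N$. The only genuine obstacle is the term in which a derivative of the second map is evaluated at the two distinct (but close) points $f(a)$ and $\tilde f(a)$: one cannot afford to control an $(r+1)$-st derivative of the \emph{unknown} map $\tilde h$, and the $h$-first decomposition is exactly what sidesteps this, leaving only the fixed map $h$ — all of whose derivatives are available on the compact set $A'$ — to be handled via the mean value inequality. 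Everything else is routine, if somewhat tedious, multilinear estimation of the finitely many Fa\`a di Bruno terms, and the localisation to manifolds (Theorem \ref{compiscont}) will afterwards only be a matter of inserting charts.
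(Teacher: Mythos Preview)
Your proof is correct and follows essentially the same route as the paper: the same two-term decomposition $\tilde h\circ\tilde f-h\circ f=(h\circ\tilde f-h\circ f)+(\tilde h-h)\circ\tilde f$, the triangle inequality (Lemma \ref{triangleinequality}), and chain-rule estimates for each summand. The only difference is packaging: the paper isolates the composition bound $\Vert g\circ f\Vert(r,A,p)\le K\,\Vert g\Vert(r,f(A),p)\,\Vert f\Vert(r,A)$ as a separate Lemma \ref{greenlemma} and invokes it for both pieces, whereas you expand Fa\`a di Bruno directly and, for the term $h\circ\tilde f-h\circ f$, telescope and apply the mean value inequality (using one extra derivative of the fixed $h$, which is harmless since $h$ is smooth and $A'$ is compact; uniform continuity of $\dd^{(j)}h$ on $A'$ would also have sufficed).
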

\begin{proof}
	Let $ A' $ be any compact neighborhood of $ f(A) $. We proceed in several steps.
	
	\textbf{Step 1.} Our first goal is to define $ \mathcal{M} $. We want a $ \delta > 0 $ such that for all $ \hat{f} \in \smooth (\mathbb{R}^m, \mathbb{R}^n) $, the inequality $ \Vert \hat{f} - f \Vert (r,A) < \delta $ implies
	\begin{align*}
		\Vert h \circ (\hat{f} - f) \Vert (r,A,p) < \frac{\epsilon}{2} && \mbox{and} && \hat{f} (A) \subseteq A'.
	\end{align*}
	By Lemma \ref{greenlemma} it is possible to choose $ \delta $ such that the first property holds. We may choose $ \delta $ such that the second property also holds, because $ \hat{f}(A) = (\hat{f}-f)(A) + f(A) \subseteq B_\delta^n (0) + f(A) $. Pick such a $ \delta $ and define
	\begin{displaymath}
		\mathcal{M} := \mathcal{N}^r (f;A,\delta).
	\end{displaymath}
	
	Observe that by the triangle inequality (Lemma \ref{triangleinequality}) there exists an $ R > 0 $ such that every $ \hat{f} \in \mathcal{M} $ satisfies $ \Vert \hat{f} \Vert (r,A) \leq R $.
	
	\textbf{Step 2.} Our second goal is to define $ \mathcal{M}' $. We want a $ \delta' > 0 $ such that for all $ \hat{h} \in \smooth (\mathbb{R}^n,E) $ and all $ \hat{f} \in \mathcal{M} $,
	\begin{align*}
		\Vert \hat{h} - h \Vert (r,A',p) < \delta' && \implies && \Vert (\hat{h} - h) \circ \hat{f} \Vert (r,A,p) < \frac{\epsilon}{2}.
	\end{align*}
	A $ \delta' $ having this property exists by Lemma \ref{greenlemma} and the observation at the end of step 1. Now define
	\begin{displaymath}
		\mathcal{M}' := \mathcal{N}^r (h;A',p,\delta').
	\end{displaymath}
	
	\textbf{Step 3.} Now we must show that $ \mathcal{M} $ and $ \mathcal{M}' $ have the desired property. Let $ \hat{f} \in \mathcal{M} $ and $ \hat{h} \in \mathcal{M}' $. By the triangle inequality,
	\begin{displaymath}
		\Vert \hat{h} \circ \hat{f} - h \circ f \Vert (r,A,p) \leq \Vert (\hat{h} - h) \circ \hat{f} \Vert (r,A,p) + \Vert h \circ (\hat{f} - f) \Vert (r,A,p) < \epsilon.
	\end{displaymath} 
	So $ \hat{h}\circ \hat{f} \in \mathcal{N} $. Thus $ \Gamma (\mathcal{M} \times \mathcal{M}') \subseteq \mathcal{N} $.
\end{proof}

A version of the preceding lemma still holds if we replace $ \mathbb{R}^m $ and $ \mathbb{R}^n $ with finite-dimensional smooth manifolds and $ E $ with an infinite-dimensional smooth manifold. This is our next result.

\begin{lemma}
	\label{illmannslemma}
	Given $ f \in \smooth (M,N) $ and $ h \in \smooth (N,X) $ and an arbitrary elementary neighborhood
	\begin{displaymath}
		\mathcal{N} := \mathcal{N}^r (h\circ f;A,(U,\phi),(W,\eta),p,\epsilon) \subseteq \vsmooth (M,X)
	\end{displaymath}
	of $ h\circ f $ there exist finitely many
	\begin{align*}
		\mathcal{M}_j := \mathcal{N}^r (f;A_j,(U,\phi),(V_j,\psi_j),\delta_j) && \mbox{and} && \mathcal{M}_j' := \mathcal{N}^r (h;A_j',(V_j,\psi),(W,\eta),p,\delta_j')
	\end{align*}
	such that
	\begin{enumerate} 
		\item $\displaystyle\Gamma \left( \bigcap_j \left( \mathcal{M}_j \times \mathcal{M}_j' \right) \right) \subseteq \mathcal{N} $,
		\item $\displaystyle \bigcup_j A_j = A $,
		\item $ f(A_j) \subseteq \interior A_j' \subseteq A_j' \subseteq V_j $ for all $ j $.
	\end{enumerate}
	
	Moreover, given any neighborhood $ Q $ of $ f(A) $ we may choose the $ V_j $ such that all $ V_j \subseteq Q $.
\end{lemma}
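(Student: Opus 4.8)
The plan is to reduce the manifold statement (Lemma \ref{illmannslemma}) to the vector-space statement (Lemma \ref{realdomains}) by a localization argument, exactly in the spirit of the ``two-step'' philosophy advertised in the introduction. First I would use compactness of $A$ together with Lemma \ref{hackinglemma} to chop $A$ into finitely many compact pieces $A_j$, each contained in a chart domain on which both $f$ and $h\circ f$ can be read off through fixed charts. Concretely: for each $x \in A$ pick a chart $(V_x,\psi_x)$ around $f(x)$ with $V_x \subseteq W$ (shrinking so that $\eta$ and $\psi_x$ are simultaneously usable — here is where the target chart $(W,\eta)$ from $\mathcal N$ and the intermediate chart must be matched up, using that $h(V_x)\subseteq W$ after shrinking), then pick a chart $(U_x,\phi_x)$ around $x$ with $\phi_x$ a restriction of $\phi$ (so the source chart stays $(U,\phi)$ as demanded) and $f(U_x)\subseteq V_x$; cover $A$ by finitely many relatively compact such neighborhoods and set $A_j = A \cap \overline{U_{x_j}}$, relabeled by $j=1,\dots,N$. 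This gives (2). To get the extra clause ``$V_j \subseteq Q$'', simply shrink each $V_x$ at the outset so it lies in the given neighborhood $Q$ of $f(A)$, which is possible since $f(x)\in Q$.

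Next, for each $j$ I would feed the local data into Lemma \ref{realdomains}. In the chart coordinates, $f$ becomes $\psi_j \circ f \circ \phi^{-1}$ on $\phi(U_x\cap U)$, a smooth map into the model space of $X$ restricted appropriately, and $h$ becomes $\eta \circ h \circ \psi_j^{-1}$; the elementary neighborhood $\mathcal N$ restricted to $\phi(A_j)$ is of the form handled by Lemma \ref{realdomains} (after choosing a compact neighborhood $A_j'$ of $f(A_j)$ inside $V_j$, with $f(A_j)\subseteq \interior A_j'$, which is possible by local compactness — this gives clause (3)). Lemma \ref{realdomains} then hands back $\delta_j,\delta_j'>0$ such that the composition of the corresponding elementary neighborhoods $\mathcal M_j$ of $f$ and $\mathcal M_j'$ of $h$ lands inside the $j$-th localized piece of $\mathcal N$. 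Since $\bigcup_j \phi(A_j) = \phi(A)$, controlling the composite on every $\phi(A_j)$ controls it on $\phi(A)$, so $\Gamma(\bigcap_j(\mathcal M_j\times\mathcal M_j')) \subseteq \mathcal N$, which is (1). The intersection is finite, so no local-finiteness issue arises at this stage.

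**The main obstacle** I expect is the bookkeeping in matching charts so that the conclusion is literally stated with the \emph{given} source chart $(U,\phi)$ on the $\mathcal M_j$ and with the intermediate chart $(V_j,\psi_j)$ appearing both as the target chart of $\mathcal M_j$ and the source chart of $\mathcal M_j'$ — i.e. that the charts are nested/compatible enough that Lemma \ref{realdomains}, which is phrased purely for Euclidean domains and the identity charts, actually applies to the transition maps without extra distortion. Two technical points need care here: (i) the transition map $\psi_j \circ f \circ \phi^{-1}$ is only defined on an open subset of $\mathbb R^m$, not all of it, so to invoke Lemma \ref{realdomains} one must first extend it (or the relevant restriction to a neighborhood of the compact set $\phi(A_j)$) to a globally defined smooth map on $\mathbb R^m$ — which is harmless since only values near $\phi(A_j)$ matter and one can use a cutoff, but it should be mentioned; and (ii) one must ensure $A_j' \subseteq V_j$ and simultaneously that $h$ read in the $\psi_j$-chart is defined on a neighborhood of $\psi_j(A_j')$, again possibly requiring a cutoff extension. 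Once these extensions are in place the estimates are immediate from Lemma \ref{realdomains} and the triangle inequality, so the genuine content really is entirely in Lemma \ref{realdomains} plus the finite covering argument; the rest, as the authors warn, is ``only a matter of dealing with charts.''
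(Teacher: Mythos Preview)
Your approach is essentially the paper's: cover, localize, and feed each piece into Lemma~\ref{realdomains}. The one organizational difference worth noting is that the paper covers $f(A)$ in $N$ rather than $A$ in $M$: it first chooses finitely many compact sets $D_j \subseteq \interior A_j' \subseteq A_j' \subseteq V_j$ covering $f(A)$, and only then sets $A_j := A \cap f^{-1}(D_j)$. This has the mild advantage that clause~(3), namely $f(A_j) \subseteq \interior A_j'$, is immediate from $f(A_j) \subseteq D_j$, whereas in your version (cover $A$, then choose $A_j'$ afterwards as a compact neighborhood of $f(A_j)$) you have to go back and verify that such $A_j'$ fits inside the already-fixed $V_j$; this is of course fine by local compactness, but the paper's order of choices makes the nesting automatic. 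Your version with $A_j = A \cap \overline{U_{x_j}}$ also needs a small repair: as written, $f(\overline{U_{x_j}})$ need not stay in $V_{x_j}$, so you should intersect $A$ with a compact set contained in $U_{x_j}$ rather than with its closure.

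Your ``main obstacle'' is well spotted and, interestingly, the paper does not address it: it simply applies Lemma~\ref{realdomains} to the chart representatives $\psi_j \circ f \circ \phi^{-1}$ and $\eta \circ h \circ \psi_j^{-1}$ as if they were globally defined on $\mathbb{R}^m$ and $\mathbb{R}^n$. Your proposed fix via cutoff extension is exactly the right way to make this rigorous, and since the conclusion of Lemma~\ref{realdomains} only concerns values on the compact sets $\phi(A_j)$ and $\psi_j(A_j')$, any smooth extension agreeing near those sets does the job.
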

\begin{proof}
	Since $ f(A) $ is compact we may choose finitely many sets
	\begin{align*}
		D_j \subseteq \interior A_j' \subseteq A_j' \subseteq V_j \subseteq N && \mbox{such that} && f(A) \subseteq \bigcup_j D_j,
	\end{align*}
	where $ D_j $ and $ A_j' $ are compact, and $ V_j $ is a chart domain for a chart $ (V_j, \psi_j) $ on $ N $. Shrinking the $ V_j $ we may assume that every $ V_j \subseteq Q $. Set
	\begin{displaymath} 
		A_j := A \cap f^{-1} (D_j),
	\end{displaymath} 
	to obtain compact sets that satisfy
	\begin{align*}
		\bigcup_j A_j = A && \mbox{and} && f(A_j) \subseteq D_j \mbox{, for all $ j $.}
	\end{align*}
	
	Let
	\( \mathcal{N}_j := \bigcap_j \mathcal{N}^r (h\circ f ; A_j, (U,\phi),(W,\eta),p,\epsilon) \),
	and note that $ \mathcal{N} = \bigcap_j \mathcal{N}_j $. For each $ j $ apply Lemma \ref{realdomains} to the maps $ \psi_j \circ f \circ \phi^{-1} $ and $ \eta \circ h \circ \psi_j^{-1} $ and the elementary neighborhood
	\begin{displaymath}
		\tilde{\mathcal{N}}_j := \mathcal{N}^r (\eta \circ h \circ f \circ \phi^{-1};\phi(A_j),p,\epsilon)
	\end{displaymath}
	to obtain elementary neighborhoods
	\begin{align*}
		\tilde{\mathcal{M}}_j = \mathcal{N}^r (\psi_j \circ f \circ \phi^{-1};\phi(A_j),\delta_j) && \mbox{and} && \tilde{\mathcal{M}}_j' = \mathcal{N}^r (\eta \circ h \circ \psi_j^{-1};\psi_j (A_j'),p,\delta_j')
	\end{align*}
	such that $ \Gamma (\tilde{\mathcal{M}}_j \times \tilde{\mathcal{M}}_j') \subseteq \tilde{\mathcal{N}}_j $. 
	
	The elementary neighborhoods $ \tilde{\mathcal{M}}_j $ and $ \tilde{\mathcal{M}}_j' $ of $ \psi_j \circ f \circ \phi^{-1} $ and $ \eta \circ h \circ \psi_j^{-1} $, respectively, induce elementary neighborhoods
	\begin{align*}
		\mathcal{M}_j = \mathcal{N}^r (f;A_j,(U,\phi),(V_j,\psi_j),\delta_j) && \mbox{and} && \mathcal{M}_j' = \mathcal{N}^r (h;A_j',(V_j,\psi_j),(W,\eta),p,\delta_j')
	\end{align*}
	of $ f $ and $ h $, respectively. These neighborhoods correspond to each other in the sense that $ \hat{f} \in \mathcal{M}_j $ if and only if $ \psi_j \circ \hat{f} \circ \phi^{-1} \in \tilde{\mathcal{M}}_j $, and $ \hat{h} \in \mathcal{M}_j' $ if and only if $ \eta \circ \hat{h} \circ \psi_j^{-1} \in \tilde{\mathcal{M}}_j' $. Similarly for $ \tilde{\mathcal{N}}_j $ and $ \mathcal{N}_j $. Since $ \Gamma (\tilde{\mathcal{M}}_j \times \tilde{\mathcal{M}}_j') \subseteq \tilde{\mathcal{N}}_j $, one has by the correspondence described here that $ \Gamma (\mathcal{M}_j \times \mathcal{M}_j') \subseteq \mathcal{N}_j $. 
	
	Now just observe that
	\begin{displaymath}
		\Gamma \left( \bigcap_j \left( \mathcal{M}_j \times \mathcal{M}_j' \right) \right) \subseteq \bigcap_j \Gamma \left( \mathcal{M}_j \times \mathcal{M}_j' \right) \subseteq \bigcap_j \mathcal{N}_j = \mathcal{N}.
	\end{displaymath}
\end{proof}
\begin{lemma}
	\label{compislemma}
	Consider smooth maps $ f \in \vsmooth (M,N) $ and $ h \in \vsmooth (N,X) $ and a basic neighborhood $ \mathcal{U} = \bigcap_{i \in \Lambda} \mathcal{N}_i $, where each
	\begin{displaymath} 
	\mathcal{N}_i = \mathcal{N}^{r_i} (h\circ f;A_i,(U_i,\phi_i),(W_i,\eta_i),p_i,\epsilon_i)
	\end{displaymath}
	is an elementary neighborhood of $ h\circ f $. 
	
	If $ \lbrace f(A_i) \rbrace_{i \in \Lambda} $ is locally finite, then there exist basic neighborhoods $ \mathcal{V} $ and $ \mathcal{V}' $ of $ f $ and $ h $, respectively, such that $ \Gamma (\mathcal{V} \times \mathcal{V}') \subseteq \mathcal{U} $.
\end{lemma}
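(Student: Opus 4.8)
The plan is to bootstrap from Lemma~\ref{illmannslemma}, which already handles a single elementary neighborhood, by intersecting over the countably many elementary pieces of $\mathcal{U}$. Writing $\Lambda=\mathbb{N}$ (allowed, since every locally finite family over a $\sigma$-compact space is countable), I would first, for each $i$, produce an open neighborhood $Q_i$ of $f(A_i)$ in $N$ such that the family $\{Q_i\}_{i\in\mathbb{N}}$ is locally finite (this is the one nontrivial input, discussed below), and then apply Lemma~\ref{illmannslemma} to $f$, $h$ and $\mathcal{N}_i$ with this $Q_i$ as the prescribed neighborhood of $f(A_i)$. This yields finitely many elementary neighborhoods $\mathcal{M}_{i,1},\dots,\mathcal{M}_{i,N_i}$ of $f$ and $\mathcal{M}'_{i,1},\dots,\mathcal{M}'_{i,N_i}$ of $h$, where $\mathcal{M}_{i,j}=\mathcal{N}^{r_i}(f;A_{i,j},(U_i,\phi_i),(V_{i,j},\psi_{i,j}),\delta_{i,j})$ and $\mathcal{M}'_{i,j}=\mathcal{N}^{r_i}(h;A'_{i,j},(V_{i,j},\psi_{i,j}),(W_i,\eta_i),p_i,\delta'_{i,j})$, such that $\Gamma\bigl(\bigcap_j(\mathcal{M}_{i,j}\times\mathcal{M}'_{i,j})\bigr)\subseteq\mathcal{N}_i$, with $A_{i,j}\subseteq A_i$ and $f(A_{i,j})\subseteq A'_{i,j}\subseteq V_{i,j}\subseteq Q_i$. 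I would then set $\mathcal{V}:=\bigcap_{i,j}\mathcal{M}_{i,j}$ and $\mathcal{V}':=\bigcap_{i,j}\mathcal{M}'_{i,j}$, relabelling the doubly-indexed but countable index set by $\mathbb{N}$.

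Granting that $\mathcal{V}$ and $\mathcal{V}'$ are basic neighborhoods of $f$ and $h$, the inclusion $\Gamma(\mathcal{V}\times\mathcal{V}')\subseteq\mathcal{U}$ is then purely formal: if $(\hat f,\hat h)\in\mathcal{V}\times\mathcal{V}'$, then for each fixed $i$ we have $(\hat f,\hat h)\in\bigcap_j(\mathcal{M}_{i,j}\times\mathcal{M}'_{i,j})$, hence $\hat h\circ\hat f=\Gamma(\hat f,\hat h)\in\mathcal{N}_i$; intersecting over $i$ gives $\hat h\circ\hat f\in\bigcap_i\mathcal{N}_i=\mathcal{U}$. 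So the real work is checking that the underlying compact families $\{A_{i,j}\}$ and $\{A'_{i,j}\}$ are locally finite. For $\{A_{i,j}\}$ this is immediate: $A_{i,j}\subseteq A_i$, the family $\{A_i\}$ is locally finite since it underlies $\mathcal{U}$, and the $j$-range is finite for each $i$, so any point of $M$ has a neighborhood meeting only finitely many $A_i$, hence only finitely many $A_{i,j}$. For $\{A'_{i,j}\}$ the same argument works once we know that $\{Q_i\}$ is locally finite, because $A'_{i,j}\subseteq Q_i$ and the $j$-range is again finite; this is the only place the hypothesis that $\{f(A_i)\}$ is locally finite is used, and it is the step I expect to be the crux.

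It therefore remains to construct, from the locally finite family of compacta $\{f(A_i)\}_{i\in\mathbb{N}}$ in $N$, a locally finite family of open sets $\{Q_i\}$ with $f(A_i)\subseteq Q_i$. This is a standard fact about paracompact Hausdorff (here metrizable, $\sigma$-compact) spaces, which I would either invoke from the point-set toolbox in the appendix or prove inline: for each $x\in N$ pick an open $V_x\ni x$ meeting only finitely many $f(A_i)$ and, after intersecting with finitely many further open neighborhoods of $x$, arrange that $V_x\cap f(A_i)=\emptyset$ whenever $x\notin f(A_i)$; then take a locally finite open refinement $\{W_\alpha\}$ of $\{V_x\}_{x\in N}$ and put $Q_i:=\bigcup\{W_\alpha : W_\alpha\cap f(A_i)\ne\emptyset\}$. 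One has $f(A_i)\subseteq Q_i$ because the $W_\alpha$ cover $N$, and $\{Q_i\}$ is locally finite because every $W_\alpha$ meeting $f(A_i)$ lies in some $V_x$ with $x\in f(A_i)$, while $\{f(A_i)\}$, being locally finite, is in particular point-finite. With $\{Q_i\}$ in hand the argument closes as above; the only remaining routine points are that each $\mathcal{M}_{i,j}$ and $\mathcal{M}'_{i,j}$ really is an elementary neighborhood of $f$, resp.\ $h$ (part of the assertion of Lemma~\ref{illmannslemma}), and that finite unions and refinements of locally finite families are locally finite.
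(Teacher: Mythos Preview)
Your proof is correct and follows essentially the same route as the paper's: invoke Lemma~\ref{illmannslemma} for each $\mathcal{N}_i$ with a prescribed neighborhood $Q_i$ of $f(A_i)$ chosen so that $\{Q_i\}$ is locally finite, then intersect and check local finiteness of the two resulting compact families via $A_{i,j}\subseteq A_i$ and $A'_{i,j}\subseteq Q_i$. The only difference is that the paper obtains the locally finite swelling $\{Q_i\}$ (with $Q_i$ compact) by citing \cite[30.C.10]{cech}, whereas you supply an inline paracompactness argument producing open $Q_i$; either works, since Lemma~\ref{illmannslemma} only needs $Q_i$ to be a neighborhood.
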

\begin{proof}
	Since $ \lbrace f(A_i) \rbrace_{i\in\Lambda} $ is locally finite, there exist compact neighborhoods $ Q_i $ of $ f(A_i) $ such that $ \lbrace Q_i \rbrace_{i \in \Lambda} $ is locally finite, by \cite[30.C.10]{cech}. Here we use our assumption that finite-dimensional manifolds are $ \sigma $-compact.
	
	For each $ i \in \Lambda $, Lemma \ref{illmannslemma} implies that there exist
	\begin{align*}
		\mathcal{W}_i :=& \bigcap_{j=1}^{n_i} \mathcal{N}^{r_i} (f;A_{i,j},(U_i,\phi_i),(V_{i,j},\psi_{i,j}),\delta_{i,j}), \\
		\mathcal{W}_i' :=& \bigcap_{j=1}^{n_i} \mathcal{N}^{r_i} (h;A_{i,j}',(V_{i,j},\psi_{i,j}),(W_i,\eta_i),p_i,\delta_{i,j}')
	\end{align*}
	such that
	\begin{enumerate}
		\item $\displaystyle \Gamma (\mathcal{W}_i \times \mathcal{W}_i') \subseteq \mathcal{N}_i $,
		\item $\displaystyle \bigcup_{j=1}^{n_i} A_{i,j} = A_i $,
		\item $ A_{i,j}' \subseteq V_{i,j} \subseteq Q_i $ for all \( j \).
	\end{enumerate}
	Then $ \lbrace A_{i,j} \rbrace_{i,j} $ is locally finite by (2) and since $ \lbrace A_i \rbrace_i $ is locally finite, and $ \lbrace A_{i,j}' \rbrace_{i,j} $ is locally finite by (3) and since $ \lbrace Q_i \rbrace_i $ is locally finite.	Hence
	\begin{align*}
		\mathcal{V} := \bigcap_{i\in\Lambda} \mathcal{W}_i && \mbox{and} && \mathcal{V}' := \bigcap_{i\in\Lambda} \mathcal{W}_i'
	\end{align*}
	are basic neighborhoods of $ f $ and $ h $, respectively, such that
	\begin{displaymath}
		\Gamma \left( \mathcal{V} \times \mathcal{V}' \right) = \Gamma \left( \bigcap_{i\in\Lambda} \left( \mathcal{W}_i \times \mathcal{W}_i' \right) \right) \subseteq \bigcap_{i\in\Lambda} \Gamma (\mathcal{W}_i \times \mathcal{W}_i') \subseteq \bigcap_{i\in\Lambda} \mathcal{N}_i = \mathcal{U}.
	\end{displaymath}
\end{proof}
\begin{theorem}
	\label{compiscont}
	Let $M$ and $N$ be finite-dimensional smooth manifolds and let $ X $ be a smooth manifold modeled on a locally convex vector space $E$. Then the composition map
	\begin{displaymath}
	\Gamma \colon \Prop_{\textup{vS}} (M,N) \times \vsmooth (N,X) \to \vsmooth (M,X)
	\end{displaymath}
	sending $ (f,h) $ to $ h \circ f $ is continuous.
\end{theorem}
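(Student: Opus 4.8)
The plan is to verify continuity of $\Gamma$ pointwise and reduce everything to Lemma \ref{compislemma}. Fix $(f,h) \in \Prop_{\textup{vS}}(M,N) \times \vsmooth(N,X)$ and let $\mathcal{U} = \bigcap_{i \in \Lambda} \mathcal{N}_i$ be a basic neighborhood of $\Gamma(f,h) = h \circ f$ in $\vsmooth(M,X)$, where each $\mathcal{N}_i = \mathcal{N}^{r_i}(h \circ f; A_i,(U_i,\phi_i),(W_i,\eta_i),p_i,\epsilon_i)$ is an elementary neighborhood and $\{A_i\}_{i \in \Lambda}$ is locally finite. By Lemma \ref{compislemma} it suffices to check that the family $\{f(A_i)\}_{i \in \Lambda}$ of compact subsets of $N$ is again locally finite; the lemma then produces basic neighborhoods $\mathcal{V}$ of $f$ and $\mathcal{V}'$ of $h$ with $\Gamma(\mathcal{V} \times \mathcal{V}') \subseteq \mathcal{U}$, and $(\mathcal{V} \cap \Prop_{\textup{vS}}(M,N)) \times \mathcal{V}'$ is then an open neighborhood of $(f,h)$ mapped by $\Gamma$ into $\mathcal{U}$.

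Hence the only point to establish is that a proper map carries locally finite families of subsets to locally finite families. Let $y \in N$. Since $N$ is a finite-dimensional, hence locally compact, manifold, choose a compact neighborhood $L$ of $y$. As $f$ is proper, $f^{-1}(L)$ is compact, so by local finiteness of $\{A_i\}$ it meets $A_i$ for only finitely many indices, say for $i \in F$ with $F \subseteq \Lambda$ finite. If $f(A_i) \cap L \neq \emptyset$, then $A_i \cap f^{-1}(L) \neq \emptyset$, forcing $i \in F$; thus the open set $\interior L$ meets $f(A_i)$ for at most finitely many $i$. Therefore $\{f(A_i)\}_{i \in \Lambda}$ is locally finite, which completes the argument.

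I expect essentially no obstacle at this stage: the hard analytic work — estimating derivatives of compositions through charts via Lemma \ref{realdomains}, reassembling these local estimates over finitely many chart domains in Lemma \ref{illmannslemma}, and globalizing them to a basic neighborhood with locally finite underlying compact family in Lemma \ref{compislemma} — has already been done. Properness enters only through the book-keeping observation above, and this is precisely what fails for general $f$: by Example \ref{counterexample}, without properness one cannot keep the underlying compact family of the neighborhood $\mathcal{V}'$ of $h$ locally finite. The remaining routine checks are that $\Prop_{\textup{vS}}(M,N)$ carries the subspace topology, so that $\mathcal{V} \cap \Prop_{\textup{vS}}(M,N)$ is open there, and that Lemma \ref{compislemma} is applied with the same chart and differentiation-order data $(U_i,\phi_i),(W_i,\eta_i),p_i,r_i$ appearing in $\mathcal{U}$; both are immediate.
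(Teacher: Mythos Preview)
Your proof is correct and follows essentially the same approach as the paper: reduce to Lemma \ref{compislemma} by verifying that $\{f(A_i)\}_{i\in\Lambda}$ is locally finite when $f$ is proper. The only difference is that the paper cites \cite[Lemma 3.10.11]{engelking} for this preservation of local finiteness, whereas you supply the short direct argument via local compactness of $N$.
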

\begin{proof}
	It suffices to show that given maps $ f \in \Prop_{\textup{vS}} (M,N) $ and $ h \in \vsmooth (N,E) $ and a basic neighborhood
	\begin{displaymath}
		\mathcal{U} = \bigcap_{i\in\Lambda} \mathcal{N}^{r_i} (h\circ f;A_i,(U_i,\phi_i),(V_i,\psi_i),p_i,\epsilon_i)
	\end{displaymath}
	of $ h\circ f $ in $ \vsmooth (M,X) $, there exist basic neighborhoods $ \mathcal{V} $ and $ \mathcal{V}' $ of $ f $ and $ h $, respectively, such that $ \Gamma (\mathcal{V} \times \mathcal{V}') \subseteq \mathcal{U} $.
	
	So suppose that we are given $ f , h $ and $ \mathcal{U} $ as above. Then $ \lbrace f(A_i) \rbrace_{i \in \Lambda} $ is locally finite since $ f $ is proper, by \cite[Lemma 3.10.11]{engelking}. Thus we may apply Lemma \ref{compislemma} to obtain the desired neighborhoods $ \mathcal{V} $ and $ \mathcal{V}' $.
\end{proof}
Unfortunately, precomposition is not continuous in general as an examination of Example \ref{counterexample} reveals. However, precomposition by a proper map is continuous. 
\begin{proposition}
	\label{precomposition}
	Let $ f \in \Prop_{\textup{vS}} (M,N)$. Then the following map is continuous 
	\begin{align*}
		f^* \colon \vsmooth (N,X) &\to \vsmooth (M,X), \quad		h \mapsto h \circ f 
	\end{align*}
\end{proposition}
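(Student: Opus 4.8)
The plan is to deduce this directly from Theorem \ref{compiscont}. Since $f \in \Prop_{\textup{vS}} (M,N)$, the assignment $\iota_f \colon \vsmooth (N,X) \to \Prop_{\textup{vS}} (M,N) \times \vsmooth (N,X)$, $h \mapsto (f,h)$, is well defined and continuous: a map into a product space is continuous if and only if each of its components is, and here the first component is the constant map with value $f$ while the second is the identity on $\vsmooth (N,X)$. By construction $f^* = \Gamma \circ \iota_f$, where $\Gamma$ is the composition map of Theorem \ref{compiscont}; being a composite of continuous maps, $f^*$ is continuous.

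If a self-contained argument is preferred, one unwinds the proof of Lemma \ref{compislemma} in the special case in which the first factor is held fixed at $f$. Given a basic neighborhood $\mathcal{U} = \bigcap_{i\in\Lambda} \mathcal{N}_i$ of $h\circ f$ with underlying compact family $\lbrace A_i \rbrace_{i\in\Lambda}$, properness of $f$ guarantees, by \cite[Lemma 3.10.11]{engelking}, that $\lbrace f(A_i) \rbrace_{i\in\Lambda}$ is locally finite. One may then apply Lemma \ref{illmannslemma} to each $\mathcal{N}_i$ and assemble the resulting elementary neighborhoods of $h$ into a set $\mathcal{V}' := \bigcap_{i,j} \mathcal{M}_{i,j}'$; local finiteness of the underlying family $\lbrace A_{i,j}' \rbrace_{i,j}$ follows exactly as in Lemma \ref{compislemma} (using a locally finite family of compact neighborhoods $Q_i$ of the $f(A_i)$), so $\mathcal{V}'$ is a genuine basic neighborhood of $h$. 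One checks $f^*(\mathcal{V}') = \Gamma(\lbrace f \rbrace \times \mathcal{V}') \subseteq \bigcap_i \mathcal{N}_i = \mathcal{U}$, which gives continuity of $f^*$ at the arbitrary point $h$.

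There is essentially no obstacle here: all of the real work was carried out in Lemmata \ref{realdomains}, \ref{illmannslemma} and \ref{compislemma}, and the proposition is merely the corollary obtained by fixing one argument of $\Gamma$. The only point requiring a moment's care is that the hypothesis "$f$ proper" is precisely what legitimizes the reduction to Theorem \ref{compiscont} (it places $(f,h)$ in the domain of the continuous restricted $\Gamma$) and, in the self-contained version, what makes $\lbrace f(A_i) \rbrace_{i\in\Lambda}$ locally finite.
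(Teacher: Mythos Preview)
Your proof is correct and follows exactly the paper's approach: the paper also defines $\iota_f(h)=(f,h)$, notes its continuity, and factors $f^* = \Gamma \circ \iota_f$ through Theorem~\ref{compiscont}. Your additional self-contained paragraph is superfluous but accurate.
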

\begin{proof}
	The map $ \iota_f \colon \vsmooth (N,X) \to \Prop (M,N) \times \vsmooth (N,X) $ given by $ \iota_f (h) = (f,h) $ is continuous. Hence $ f^* $, which is the composition 
	\begin{displaymath}
		\vsmooth (N,X) \xrightarrow{\iota_f} \Prop_{\textup{vS}} (M,N) \times \vsmooth (N,X) \xrightarrow{\Gamma} \vsmooth (M,X),
	\end{displaymath}
	is also continuous. 
\end{proof}

We will now prove that postcomposition is always continuous. 
This result is needed even for postcomposition by a map $f\colon X \rightarrow Y$ between infinite-dimensional manifolds.
Thus the next proposition can not readily be deduced from Theorem \ref{compiscont} (or the other results in this section)\footnote{For the finite-dimensional case, a proof along these lines can be found in \cite{illman}.}. 
Instead we have to take a detour using results on the (coarser) compact open $C^\infty$-topology (cf.\ to the proof of Lemma \ref{lem: atlaschoice}). 

\begin{proposition}\label{postcomposition}
 Let $f \colon X \rightarrow Y$ be smooth, where $Y$ is a (possibly infinite-dimensional) manifold. 
 Then for any finite-dimensional manifold $M$ the following map is continuous 
      \begin{displaymath}
       f_* \colon \vsmooth (M,X) \rightarrow \vsmooth (M,Y) , \quad h \mapsto f\circ h
      \end{displaymath}
\end{proposition}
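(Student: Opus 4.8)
The plan is to reduce to the vector-space case and then localize using charts, following the two-step pattern emphasized in the introduction. Since the very strong topology has the basic neighborhoods as a basis, it suffices to show that for a fixed $h \in \vsmooth(M,X)$ and a basic neighborhood $\mathcal{U} = \bigcap_{i\in\Lambda}\mathcal{N}^{r_i}(f\circ h; A_i,(U_i,\phi_i),(W_i,\eta_i),p_i,\epsilon_i)$ of $f\circ h$ in $\vsmooth(M,Y)$, there is a basic neighborhood $\mathcal{V}$ of $h$ in $\vsmooth(M,X)$ with $f_*(\mathcal{V})\subseteq\mathcal{U}$. The key simplification over Theorem~\ref{compiscont} is that the underlying compact family $\{A_i\}_{i\in\Lambda}$ is fixed and lives in the source $M$; postcomposition does not move it, so there is no issue of producing a new locally finite family. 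Thus we can work one index $i$ at a time and then intersect.

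Fix $i$. First I would shrink: choose a chart $(V_i,\theta_i)$ on $X$ and a compact set $B_i\subseteq M$ with $A_i\subseteq \interior B_i\subseteq B_i\subseteq U_i\cap h^{-1}(V_i)$ and $f(V_i)\subseteq W_i$ (possible by continuity of $h$ and $f$ and local compactness of $M$; replace $\phi_i$ by its restriction). Now the problem is purely local: in the charts $(\phi_i,\theta_i)$ on the source-target side and $(\theta_i,\eta_i)$ across $f$, postcomposition by $f$ becomes postcomposition by the smooth map $g_i := \eta_i\circ f\circ\theta_i^{-1}$ between open subsets of locally convex spaces. So it suffices to prove: for $k := \theta_i\circ h\circ\phi_i^{-1}$ (a smooth map $\phi_i(B_i)\to\theta_i(V_i)$) and any $\epsilon_i>0$ there is $\delta_i>0$ and a suitable seminorm $q_i$ on the model space of $X$ such that $\Vert \tilde k - k\Vert(r_i,\phi_i(A_i),q_i)<\delta_i$ and $\tilde k(\phi_i(A_i))$ staying in $\theta_i(V_i)$ imply $\Vert g_i\circ\tilde k - g_i\circ k\Vert(r_i,\phi_i(A_i),p_i)<\epsilon_i$. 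This is exactly an estimate of the kind packaged in Lemma~\ref{greenlemma} (the composition estimate used in the proof of Lemma~\ref{realdomains}), applied with the \emph{identity} in the source slot and $g_i$ in the target slot; one uses the chain rule (Proposition~\ref{chainruleprop}) to bound the derivatives of $g_i\circ\tilde k - g_i\circ k$ up to order $r_i$ in terms of the derivatives of $\tilde k - k$ up to order $r_i$ on the compact set $\phi_i(A_i)$, a bound on the derivatives of $\tilde k$ (obtained as in Step~1 of Lemma~\ref{realdomains} via the triangle inequality Lemma~\ref{triangleinequality}), and the continuity and differentiability of $g_i$ on a compact neighborhood of $k(\phi_i(A_i))$. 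The seminorm $q_i$ and the shrinking of $\theta_i(V_i)$ to a relatively compact neighborhood of $k(\phi_i(A_i))$ are chosen here so that $\tilde k$ stays in the domain of $g_i$, exactly as the "$\hat f(A)\subseteq A'$" clause was arranged in Lemma~\ref{realdomains}.

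Having produced, for each $i$, an elementary neighborhood $\mathcal{V}_i := \mathcal{N}^{r_i}(h;A_i,(U_i,\phi_i),(V_i,\theta_i),q_i,\delta_i)$ of $h$ with $f_*(\mathcal{V}_i)\subseteq\mathcal{N}_i$, I set $\mathcal{V} := \bigcap_{i\in\Lambda}\mathcal{V}_i$. Its underlying compact family is $\{A_i\}_{i\in\Lambda}$, which is locally finite by hypothesis, so $\mathcal{V}$ is a genuine basic neighborhood of $h$ in $\vsmooth(M,X)$; and $f_*(\mathcal{V})\subseteq\bigcap_i f_*(\mathcal{V}_i)\subseteq\bigcap_i\mathcal{N}_i=\mathcal{U}$. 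This proves continuity at $h$, and since $h$ was arbitrary, $f_*$ is continuous. The main obstacle is the local composition estimate in the middle paragraph — controlling all derivatives of $g_i\circ\tilde k$ up to order $r_i$ uniformly on $\phi_i(A_i)$ — but this is precisely the content already isolated in Lemma~\ref{greenlemma}/Lemma~\ref{realdomains}, so once the charts are set up correctly the argument is routine; the only subtlety is making sure the domain condition on $\tilde k$ is built into the choice of $\delta_i$ and of the chart image $\theta_i(V_i)$.
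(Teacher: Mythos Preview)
Your strategy has a genuine gap. Lemma~\ref{greenlemma} (and hence Lemma~\ref{realdomains}) is stated and proved for compositions \( g \circ f \) with \( f \colon \RR^m \to \RR^n \) and \( g \colon \RR^n \to E \); the proof crucially uses that the intermediate space is \( \RR^n \), expanding \( \tilde{\T}^k f \) in the finite basis \( e_1,\dots,e_n \) and then invoking multilinearity of \( \dd^{(j)} g \). In your local picture the intermediate space is the model space \( E \) of \( X \), which may be infinite-dimensional, so the lemma simply does not apply to \( g_i \circ \tilde{k} \) with \( \tilde{k} \colon \phi_i(U_i) \to E \) and \( g_i \colon E \supseteq \theta_i(V_i) \to F \). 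This is also why you cannot ``shrink \( \theta_i(V_i) \) to a relatively compact neighborhood of \( k(\phi_i(A_i)) \)'' or speak of ``a compact neighborhood of \( k(\phi_i(A_i)) \)'': in an infinite-dimensional locally convex space compact sets have empty interior, so no such neighborhoods exist (cf.\ Remark~\ref{rem: topgeneral}). The paper explicitly flags this before the proposition, noting that the result cannot readily be deduced from Theorem~\ref{compiscont} or the other lemmata of Section~\ref{sect: compo}.

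The paper instead takes a detour through the compact open \( C^\infty \)-topology: for each (suitably split) elementary neighborhood it restricts to a compact neighborhood \( C_i \supseteq A_i \) in \( M \), observes that on \( C^\infty(\interior C_i, E)_{\text{co}} \) the pushforward by \( f_\kappa^\psi := \psi \circ f \circ \kappa^{-1} \) is continuous by \cite[Proposition~4.23(a)]{glockomega}, and then reads off finitely many elementary neighborhoods of \( h \) that map into the given one. The locally finite family \( \lbrace C_i \rbrace \) (obtained via \cite[30.C.10]{cech}) ensures the resulting intersection is a basic neighborhood. The missing ingredient in your argument is precisely the content of that external result: continuity of pushforward by a smooth map between open subsets of locally convex spaces in the compact open \( C^\infty \)-topology. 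If you prove or cite that, your outline can be completed; but it is not a consequence of Lemma~\ref{greenlemma}.
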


\begin{proof}
 To see that  $f_*$ is continuous, we proceed in several steps.
 
 \textbf{Step 1} \emph{Special elementary neighborhoods.} 
 Consider first an arbitrary elementary neighborhood $\mathcal{N} = \mathcal{N}^{r} (f\circ h; A,(U, \phi),(V,\psi),q, \epsilon)$ in $\vsmooth (M,Y)$.
 Since $h(A)$ is compact, there are finitely many manifold charts $(W_i,\kappa_i)$ of $X$ with $h(A) \subseteq \bigcup_{i} W_i$.
 Now the open sets $h^{-1} (W_i)$ cover $A$ and thus there are finitely many compact sets $K_j$ such that $A=\bigcup_{j} K_j$ and $h(K_j) \subseteq W_{i_j}$.
 Thus we replace $A$ by the finitely many compact sets. Note that this will ensure that the families of compact sets considered later remain locally finite.
 To shorten the notation, assume without loss of generality that there is a manifold chart $(W,\kappa)$ of $X$ such that $h(A) \subseteq W$ and $f(W) \subseteq V$.
 In particular, we can thus consider the mapping $f_{\kappa}^\psi := \psi \circ f \circ \kappa^{-1} \colon \kappa (W) \rightarrow \psi (V)$
 
 \textbf{Step 2} \emph{The preimage of a special elementary neighborhood of $f\circ h$ is a neighborhood of $h$.} We work locally in charts.
 Let $Y$ be modeled on the locally convex space $F$ and $X$ be modeled on the locally convex space $E$.
 Recall that the compact open $C^\infty$-topology (see \cite[Definition I.5.1]{neeb}) controls the derivatives of functions on compact sets.
 Moreover, the elementary neighborhoods of the very strong topology form a subbase of the compact open $C^\infty$-topology.
 We denote by $C^\infty (M,F)_{\text{co}}$ the vector space of smooth functions with the compact open $C^\infty$-topology. 
 
 Choose a compact neighborhood $C \subseteq U$ of $A$ such that $h(C) \subseteq W$ (this entails $f\circ h(C)\subseteq V$). 
 We endow the subset $\lfloor C,\kappa(W)\rfloor := \{g\in C^\infty (M,E) \mid g(C) \subseteq \kappa(W)\}$ with the subspace topology induced by the compact open $C^\infty$-topology.
 As $\kappa (W) \subseteq E$ is open, we note that $\lfloor C,\kappa(W)\rfloor$ is open in $C^\infty (M,E)_{\text{co}}$.
 Now \cite[Proposition 4.23 (a)]{glockomega} shows that 
 \begin{displaymath}
  (f_{\kappa}^\psi)_* \colon \lfloor C,\kappa(W) \rfloor \rightarrow C^\infty (\interior C,F)_{\text{co}},\quad  h \mapsto (\psi\circ f\circ \kappa^{-1}) \circ h
 \end{displaymath}
 is continuous. 
 Moreover, $\mathcal{N}_{loc} := \mathcal{N}^{r} (\psi \circ f\circ h|_{\interior C}; A,(U\cap \interior C, \phi),(F,\id_F),q, \epsilon)$ is open in $C^\infty (\interior C,F)_{\text{co}}$.
 Further, $f_{\kappa}^\psi \circ \kappa \circ h|_{\interior C} = \psi \circ f \circ h|_{\interior C}$. 
 Observe that thus $\kappa \circ h \in  ((f_{\kappa}^\psi)_*)^{-1} (\mathcal{N}_{loc})$. 
 As the elementary neighborhoods form a subbase of the compact open $C^\infty$-topology,  Lemma \ref{lem: atlaschoice} together with continuity of $(f_{\kappa}^\psi)_*$ yields
 \begin{equation}\label{eq: locpush}
   \lfloor C, \kappa (W)\rfloor \cap \bigcap_{k=1}^N \mathcal{N}^{r} (\kappa \circ h; A_k,(U_k, \phi_k),(E,\id_E),p_k, \epsilon_k)  \subseteq  ((f_{\kappa}^\psi)_*)^{-1} (\mathcal{N}_{loc}).
 \end{equation}
 Recall from the proof of \cite[Proposition 4.23 (a)]{glockomega} that the compact sets $A_k$ are contained by construction in $\interior C$. 
 Thus one easily deduces from \eqref{eq: locpush} that 
 \begin{displaymath}
  \mathcal{N}^{0} (h; C,(U, \phi),(W,\kappa),p_1, 1) \cap \bigcap_{k=1}^N \mathcal{N}^{r} (h; A_k,(U_k, \phi_k),(W,\kappa),p_k, \epsilon_k) \subseteq (f_*)^{-1} (\mathcal{N})
 \end{displaymath}
 Summing up, we see that $(f_*)^{-1} (\mathcal{N})$ is a neighborhood of $h$. 
 Further, this \emph{finite} family of neighborhoods controls the behavior of mappings only on a pre chosen compact set $C$ (which depends of course on $h$). 
 
 \textbf{Step 3} \emph{Preimages of basic neighborhoods are open}  
 Let $\mathcal{M} = \bigcap_{i \in \mathbb{N}} \mathcal{N}_i$ be a basic neighborhood of $f\circ h \in C^\infty (M,Y)$ with $\{A_k\}_{k \in \mathbb{N}}$ its the underlying compact family.
 We will prove that for arbitrary $g \in (f_*)^{-1} (\mathcal{M})$ the preimage is a neighborhood of $g$.
 Choose with Proposition \ref{basicnbhsisbasis} a basic neighborhood of $f\circ g$ which is contained in $\mathcal{M}$. 
 Replacing $\mathcal{M}$ with this basic neighborhood, it suffices thus to consider the case $g=h$. 
 Splitting each $A_k$ as in Step 1 we may assume without loss of generality that each $\mathcal{N}_i$ is of the form considered in Step 2.
 Use \cite[30.C.10]{cech} to construct for every $A_k$ a compact neighborhood $C_k$ such that $\{C_k\}_{k \in \mathbb{N}}$ is locally finite.
 Now we proceed for every elementary neighborhood $\mathcal{N}_k$ as above (replace $C$ in Step 2 by $C_k$ and shrink $C_k$ if necessary!). 
 Since the family $\{C_k\}_{k \in \mathbb{N}}$ is locally finite, we thus end up with a basic neighborhood $\mathcal{M}_h$ around $h$ which mapped by $f_*$ to $\mathcal{M}$.
 We conclude that $(f_*)^{-1} (M)$ is a neighborhood of $h$, whence of every of its elements.
 Hence preimages of basic neighborhoods under $f_*$ are open in $\vsmooth (M,X)$, whence $f_*$ is continuous.
\end{proof}

As an application, we can now identify (as topological spaces) spaces of maps into a product with products of spaces of mappings to the factors.

\begin{theorem}
	\label{product theorem}
	Let $ M $ be a finite-dimensional manifold, and let $ X_1 $ and $ X_2 $ be smooth manifolds modeled on locally convex vector spaces $ E_1 $ and $E_2$, respectively. Then 
	\begin{align*}
		\iota : \vsmooth (M, X_1 \times X_2) \to \vsmooth (M, X_1) \times \vsmooth (M, X_2), \quad f \mapsto (\pr_1 \circ f, \pr_2 \circ f)
	\end{align*}
	is a homeomorphism, where for \( i \in \lbrace 1,2 \rbrace \) \( \pr_i \colon X_1 \times X_2 \to X_i \) is the canonical projection.
\end{theorem}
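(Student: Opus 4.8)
The plan is to show that $\iota$ is a continuous, open bijection; openness is the only point that requires genuine work.

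That $\iota$ is a bijection is clear: its inverse sends $(f_1,f_2)$ to the smooth map $x\mapsto(f_1(x),f_2(x))$. Continuity of $\iota$ follows at once from Proposition~\ref{postcomposition}: the projections $\pr_1,\pr_2$ are smooth, so the pushforwards $(\pr_i)_*\colon\vsmooth(M,X_1\times X_2)\to\vsmooth(M,X_i)$ are continuous, and $\iota=\bigl((\pr_1)_*,(\pr_2)_*\bigr)$ is the induced map to the product, hence continuous.

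For openness I would show that $\iota$ carries a neighborhood basis at $f$ onto a neighborhood basis at $\iota(f)=(f_1,f_2)$, where $f_i:=\pr_i\circ f$. The product charts $(V^1\times V^2,\psi^1\times\psi^2)$ with $(V^i,\psi^i)$ a chart of $X_i$ form an atlas of $X_1\times X_2$, so by Lemma~\ref{lem: atlaschoice} the very strong topology on $\vsmooth(M,X_1\times X_2)$ is generated by basic neighborhoods built only from such charts; and by Proposition~\ref{generatingfamilies} one may further restrict to the continuous seminorms on $E_1\times E_2$ of the form $p(v_1,v_2)=\max\{p_1(v_1),p_2(v_2)\}$ with $p_i$ a continuous seminorm on $E_i$, since these form a generating family for $E_1\times E_2$. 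Using $\psi\circ h\circ\phi^{-1}=\bigl(\psi^1\circ(\pr_1\circ h)\circ\phi^{-1},\ \psi^2\circ(\pr_2\circ h)\circ\phi^{-1}\bigr)$ and that differentiation of a map into a product is computed componentwise, one reads off directly from Definition~\ref{norm} the identity
\begin{align*}
	&\mathcal{N}^r\bigl(f;A,(U,\phi),(V^1\times V^2,\psi^1\times\psi^2),\max\{p_1,p_2\},\epsilon\bigr)\\
	&\qquad=\iota^{-1}\Bigl(\mathcal{N}^r\bigl(f_1;A,(U,\phi),(V^1,\psi^1),p_1,\epsilon\bigr)\times\mathcal{N}^r\bigl(f_2;A,(U,\phi),(V^2,\psi^2),p_2,\epsilon\bigr)\Bigr).
\end{align*}
Intersecting over a locally finite family $\{A_i\}_{i\in\Lambda}$ and using $\bigcap_i(S_i\times T_i)=\bigl(\bigcap_iS_i\bigr)\times\bigl(\bigcap_iT_i\bigr)$, every basic neighborhood of $f$ of this special form equals $\iota^{-1}(\mathcal{U}_1\times\mathcal{U}_2)$, where $\mathcal{U}_j$ is a basic neighborhood of $f_j$ in $\vsmooth(M,X_j)$ with the same (locally finite) underlying compact family $\{A_i\}$. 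Since such neighborhoods generate the topology of the domain, and since the products $\mathcal{U}_1\times\mathcal{U}_2$ of basic neighborhoods form a neighborhood basis at $(f_1,f_2)$ for the product topology, $\iota$ is a homeomorphism.

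The main obstacle is exactly the reduction to product charts and split seminorms; once Lemma~\ref{lem: atlaschoice} and Proposition~\ref{generatingfamilies} put the chart and seminorm data on $X_1\times X_2$ into product form, the displayed identity and the passage to locally finite intersections are routine. Should the precise statement of Lemma~\ref{lem: atlaschoice} not immediately apply to the product atlas, the same reduction can be carried out directly, as in Step~1 of the proof of Proposition~\ref{postcomposition}: cover each $A_i$ by finitely many product subcharts, split $A_i$ accordingly so that local finiteness is preserved, and compare the resulting elementary neighborhoods.
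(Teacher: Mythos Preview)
Your argument is correct and follows the same overall strategy as the paper: bijectivity is clear, continuity comes from Proposition~\ref{postcomposition}, and openness is obtained by restricting to a convenient generating family of seminorms on $E_1\times E_2$ so that elementary neighborhoods on the left split as products on the right. The one genuine difference is the choice of generating family: the paper works with the seminorms $p\circ\pr_i$ (so each elementary neighborhood constrains only a single factor, and its image under $\iota$ is $\mathcal{M}_i\times\vsmooth(M,X_2)$ or $\vsmooth(M,X_1)\times\mathcal{M}_i'$), whereas you use $\max\{p_1,p_2\}$ and obtain a symmetric splitting into a product of two nontrivial elementary neighborhoods. Both families generate the product topology on $E_1\times E_2$, so Proposition~\ref{generatingfamilies} applies equally; your displayed identity is correct because $\sup_x\max\{a(x),b(x)\}=\max\{\sup_x a(x),\sup_x b(x)\}$.

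One remark on your invocation of Lemma~\ref{lem: atlaschoice}: as stated, that lemma only treats the target $E$ with the identity chart, so it does not literally authorize restricting to the product atlas on $X_1\times X_2$. You are right, however, that the reduction goes through by the splitting argument you describe (as in Step~1 of Proposition~\ref{postcomposition}), combined with a change-of-chart estimate on the target. The paper is in fact no more careful on this point: its formula $\iota(\mathcal{N}_i)=\mathcal{M}_i\times\mathcal{M}_i'$ also only makes sense once $(V_i,\psi_i)$ is a product chart, since for an arbitrary chart the expression $\pr_1\circ\psi_i\circ h$ depends on all of $h$, not just on $\pr_1\circ h$. So your explicit acknowledgement of this step is an improvement in exposition rather than a gap.
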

\begin{proof}
	Clearly $ \iota $ is a bijection, and it is continuous by Proposition \ref{postcomposition}. We will prove that $ \iota^{-1} $ is continuous, i.e. that $ \iota $ is open.
	By \eqref{locally convex product} in Proposition \ref{locally convex prop}, the set 
		\begin{displaymath}
			\mathcal{P} := \lbrace p \circ \pr_i : p \mbox{ is a continuous seminorm on } E_i \rbrace
		\end{displaymath} 
	is a generating family of seminorms on $ E_1 \times E_2 $.
	Consider a basic neighborhood $ \mathcal{U} = \bigcap_{i\in\Lambda} \mathcal{N}_i $ of $ f \in \vsmooth (M,X_1 \times X_2) $, where each
	$
		\mathcal{N}_i = \mathcal{N}^{r_i} (f;A_i,(U_i,\phi_i),(V_i,\psi_i),p_i,\epsilon_i).
	$
	By Proposition \ref{generatingfamilies} we may assume that each $ p_i \in \mathcal{P} $. Take an arbitrary $ i \in \Lambda $. If $ p_i = p\circ \pr_1 $ for some continuous seminorm $ p $ on $ E_1 $, let
	\begin{align*}
		\mathcal{M}_i = \mathcal{N}^{r_i} (\pr_1 \circ f; A_i,(U_i,\phi_i),(V_i,\psi_i),p,\epsilon_i) && \mbox{and} && \mathcal{M}_i' = \vsmooth (M,X_2).
	\end{align*}
	If $ p_i = q\circ \pr_2 $ for a continuous seminorm $ q $ on $ E_2 $, reverse the roles of $ \mathcal{M}_i $ and $ \mathcal{M}_i' $.
	
	Now suppose without loss of generalization that $ p_i = p \circ \pr_1 $ for some continuous seminorm $ p $ on $ E_1 $. For $ g \colon \mathbb{R}^m \supseteq \phi_i (A_i) \to \psi_i (V_i) \subseteq E_1 \times E_2 $, one has
	\begin{displaymath}
	\dd^{(k)} g = \dd^{(k)} (\pr_1 \circ g , \pr_2 \circ g) = ( \dd^{(k)} \pr_1 \circ g, \dd^{(k)} \pr_2 \circ g ),
	\end{displaymath}
	so the condition
	\( p_i \left( \dd^{(k)}g (a;\alpha) \right) < \epsilon_i \)
	is equivalent to
	\( p \left( \dd^{(k)} (\pr_1 \circ g) (a;\alpha) \right) < \epsilon_i \).
	Hence $ \mathcal{M}_i \times \mathcal{M}_i' = \iota (\mathcal{N}_i) $. Since $ \iota $ is bijective one has
	\begin{displaymath}
		\iota (\mathcal{U}) = \bigcap_{i\in\Lambda} \iota (\mathcal{N}_i) = \bigcap_{i\in\Lambda} \mathcal{M}_i \times \bigcap_{i\in\Lambda} \mathcal{M}_i'.
	\end{displaymath}
	So $ \iota $ is open, and a homeomorphism.
\end{proof}

\begin{corollary}
	\label{product corollary}
	If $ Q $ is a compact smooth manifold, then following map is continuous
	\begin{align*}
		\chi \colon \vsmooth (M,X) \to \vsmooth (Q \times M, Q \times X), \quad f \mapsto \id \times f.
	\end{align*}
\end{corollary}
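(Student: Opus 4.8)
The plan is to express $\chi$ as a composition of maps we already control, namely a pushforward and the inverse of the homeomorphism from Theorem \ref{product theorem}, together with a ``constant in the $Q$-factor'' embedding. First I would fix notation: writing points of $Q \times M$ as $(q,x)$, the map $\id \times f$ sends $(q,x) \mapsto (q, f(x))$. The idea is to factor $\chi$ through $\vsmooth(Q\times M, Q) \times \vsmooth(Q \times M, X)$ via the identification $\iota$ of Theorem \ref{product theorem} (applied with source manifold $Q \times M$ and target $Q \times X$, using that $Q$ is modelled on some locally convex space — indeed, being compact and finite-dimensional, on $\RR^{\dim Q}$). Under $\iota$, the map $\id \times f$ corresponds to the pair $(\pr_Q, f \circ \pr_M)$, where $\pr_Q \colon Q \times M \to Q$ and $\pr_M \colon Q \times M \to M$ are the projections. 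The first component is \emph{constant} in $f$, so the only thing that varies is $f \mapsto f \circ \pr_M = \pr_M^* (f)$.

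Next I would observe that $\pr_M \colon Q \times M \to M$ is a proper map: since $Q$ is compact, preimages of compact sets under $\pr_M$ are compact. Hence by Proposition \ref{precomposition} the precomposition map
\begin{displaymath}
  \pr_M^* \colon \vsmooth(M,X) \to \vsmooth(Q \times M, X), \quad f \mapsto f \circ \pr_M
\end{displaymath}
is continuous. Therefore the map $f \mapsto (\pr_Q, \pr_M^*(f))$ into $\vsmooth(Q\times M, Q) \times \vsmooth(Q\times M, X)$ is continuous (a constant map paired with a continuous map). Composing with the continuous inverse $\iota^{-1}$ from Theorem \ref{product theorem} yields that $\chi = \iota^{-1} \circ (\pr_Q, \pr_M^*(-))$ is continuous, which is exactly the claim. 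One should double-check that $\iota^{-1}$ does send $(\pr_Q, f\circ\pr_M)$ to $\id \times f$: indeed $\iota(\id \times f) = (\pr_1 \circ (\id\times f), \pr_2\circ(\id\times f)) = (\pr_Q, f \circ \pr_M)$, so this is immediate from bijectivity of $\iota$.

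The main obstacle, such as it is, is bookkeeping about which manifold plays which role: Theorem \ref{product theorem} must be invoked with $M$ replaced by $Q \times M$ (still finite-dimensional, as required) and $X_1 = Q$, $X_2 = X$. The only genuine verification needed is that $\pr_M$ is proper, which is a one-line consequence of compactness of $Q$ (a subset of $Q \times M$ lying over a compact $K \subseteq M$ is a closed subset of the compact set $Q \times K$). Everything else is formal composition of continuous maps. I would present the argument in exactly this order: (1) $\pr_M$ is proper; (2) hence $\pr_M^*$ is continuous by Proposition \ref{precomposition}; (3) pair with the constant $\pr_Q$ to get a continuous map into the product; (4) apply $\iota^{-1}$ from Theorem \ref{product theorem} and identify the composite with $\chi$.
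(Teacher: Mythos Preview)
Your proof is correct and is essentially identical to the paper's own argument: both factor $\chi$ through the homeomorphism of Theorem~\ref{product theorem}, observe that the first component $\pr_Q$ is constant in $f$ and the second is $\pr_M^*$ (the paper writes $\pr_2^*$), and then invoke Proposition~\ref{precomposition} using that compactness of $Q$ makes $\pr_M$ proper. The only differences are cosmetic notation and that you phrase the last step as composing with $\iota^{-1}$ while the paper says ``it suffices that $\chi_1,\chi_2$ are continuous''.
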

\begin{proof}
	By Theorem \ref{product theorem} it suffices to show that the maps
	\begin{align*}
		\chi_1 \colon \vsmooth (M,X) &\to \vsmooth (Q\times M, Q) && &\mbox{and} && \chi_2 \colon \vsmooth(M,X) &\to \vsmooth (Q \times M, X) \\
		f &\mapsto \pr_1 \circ (\id \times f) && & && f &\mapsto \pr_2 \circ (\id \times f)
	\end{align*}
	are continuous. For $ (q,m) \in Q \times M $, one has
	\begin{align*}
		\chi_1(f)(q,m) &= \pr_1 \circ (\id \times f) (q,m) = \pr_1 (q,f(m)) = q = \pr_1 (q,m), \\ 
		\chi_2(f)(q,m) &= \pr_2 \circ (\id \times f) (q,m) = \pr_2 (q,f(m)) = f(m) \\ &= f \circ \pr_2 (q,m) = \pr_2^* (f) (q,m).
	\end{align*}
	The map \( \chi_1 \) is constant in $ f $, hence continuous, and the map \( \chi_2 = \pr_2^* \). Since $ Q $ is compact, $ \pr_2 $ is proper, so Proposition \ref{precomposition} implies that $ \chi_2 = \pr_2^* $ is also continuous.
\end{proof}

\section{The fine very strong topology}

In the end, we would like a structure on \( \smooth (M,X) \) as a locally convex manifold, where \( M \) is a finite-dimensional smooth manifolds and \( X \) is a manifold modeled on a locally convex vector space \( E \), but for this purpose the very strong topology is not fine enough. A first step in the direction of making \( \smooth (M,X) \) into a locally convex manifold would be having a similar structure on \( \smooth (M,E) \). One might hope that \( \vsmooth (M,E) \) itself with the vector space structure induced by pointwise operations would be a locally convex vector space. But as Corollary \ref{supportcorollary} points out, this is not the case when $ E $ is a (non-trivial) locally convex vector space and $ M $ is a non-compact manifold. However, we will see in the next section that the subspace of \( \vsmooth (M,E) \) consisting of maps with compact support, denoted \( C_{\text{vS,c}}^\infty (M,E) \), is a locally convex vector space. Following \cite{michor}, we refine the topology on \( \vsmooth (M,E) \) to obtain a structure on \( \smooth (M,E) \) as a smooth manifold modeled on \( C_{\text{vS,c}}^\infty (M,E) \). The resulting topology on \( \smooth (M,E) \), or more generally \( \smooth (M,X) \), is called the \emph{fine very strong} topology on $ \smooth (M,X) $. The space \( \smooth (M,X) \) equipped with the fine very strong topology is denoted $ \fsmooth (M,X) $. 

Fortunately, the results of the previous sections are easily extended to hold in the fine very strong topology. This is done in Proposition \ref{fsmoothprop}.

It is a folklore fact (Proposition \ref{prop: topologiescoincide}) that in the finite-dimensional case, the very strong topology is equivalent to the $ \mathcal{D} $-topology as described in \cite[36]{michor}.\footnote{The $\mathcal{D}$-topology was defined using jet bundles (also reviewed in Appendix \ref{folklore}). Our treatment of the topology has the advantage that only elementary arguments are needed. Further, only our approach generalizes to arbitrary locally convex target manifolds.}  Consequently, the fine very strong topology is equivalent to the $ \mathcal{FD} $-topology defined in \cite[40]{michor}.

\begin{proposition}
	\label{supportprop}
	Let $ M $ be a finite-dimensional smooth manifold and $ E $ be a locally convex vector space. Consider a sequence $ \lbrace f_n \rbrace_{n\in \mathbb{N}} \subseteq \vsmooth (M,E) $ which converges in the very strong topology towards \( f \in \smooth (M,E) \). 
	Then there exist a compact $ K \subseteq M $ and an $ N \in \mathbb{N} $ such that for all $ n \geq N $ we have $$ \osupp{f}{f_n} :=  \lbrace y \in M : f_n (y) \neq f(y) \rbrace \subseteq K. $$
\end{proposition}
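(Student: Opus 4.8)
The plan is to argue by contradiction: assuming the conclusion fails, I will produce a basic (hence open) neighborhood of $f$ in $\vsmooth(M,E)$ that misses infinitely many of the $f_n$, contradicting the assumed convergence. So suppose that for every compact $K\subseteq M$ and every $N\in\RR$... more precisely every $N\in\mathbb{N}$, there is some $n\geq N$ with $\osupp{f}{f_n}\not\subseteq K$. Using that $M$ is $\sigma$-compact, I first fix an increasing compact exhaustion $L_1\subseteq L_2\subseteq\cdots$ with $\bigcup_{k}L_k=M$.

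Next I would construct inductively a strictly increasing sequence $n_1<n_2<\cdots$ and points $x_j\in M$ as follows. Having chosen $n_1,\dots,n_{j-1}$ and $x_1,\dots,x_{j-1}$, pick $k_j\geq j$ large enough that $x_1,\dots,x_{j-1}\in L_{k_j}$, and apply the failure hypothesis with $K=L_{k_j}$ and $N=n_{j-1}+1$ to obtain $n_j\geq n_{j-1}+1$ together with a point $x_j\notin L_{k_j}$ satisfying $f_{n_j}(x_j)\neq f(x_j)$. Since $x_i\in L_{k_j}$ for $i<j$ while $x_j\notin L_{k_j}$, the $x_j$ are pairwise distinct; and since $k_j\geq j$, any compact subset of $M$, lying in some $L_m$, can contain $x_j$ for at most $m-1$ indices $j$. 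Hence $\{x_j\}_{j\in\mathbb{N}}$, viewed as a family of one-point compact sets, is locally finite (in particular closed and discrete in $M$).

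Then for each $j$, I would choose a chart $(U_j,\phi_j)$ of $M$ around $x_j$ and, using that $E$ is Hausdorff, a continuous seminorm $p_j$ on $E$ with $c_j:=p_j\bigl(f_{n_j}(x_j)-f(x_j)\bigr)>0$. Form the elementary $C^0$-neighborhood $\mathcal{N}_j:=\mathcal{N}^0\bigl(f;\{x_j\},(U_j,\phi_j),(E,\id_E),p_j,c_j\bigr)$ of $f$. Unwinding Definition \ref{elementarynbh}, a map $h$ lies in $\mathcal{N}_j$ precisely when $p_j(h(x_j)-f(x_j))<c_j$, so that $f\in\mathcal{N}_j$ but $f_{n_j}\notin\mathcal{N}_j$. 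Because $\{\{x_j\}\}_{j\in\mathbb{N}}$ is locally finite, $\mathcal{U}:=\bigcap_{j\in\mathbb{N}}\mathcal{N}_j$ is a basic neighborhood of $f$ in $\vsmooth(M,E)$ by Definition \ref{basicnbh} and Proposition \ref{basicnbhsisbasis}, yet $f_{n_j}\notin\mathcal{U}$ for every $j$. Since $n_j\to\infty$, the sequence $\{f_n\}$ is not eventually contained in $\mathcal{U}$, contradicting convergence towards $f$ in the very strong topology.

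The only genuinely delicate point — and hence the expected main obstacle — is the bookkeeping in the induction: one must arrange the exhausting sets $L_{k_j}$ so that the escaping points $x_j$ form a locally finite family, which is exactly what is needed for $\mathcal{U}$ to be a legitimate basic neighborhood. Taking the underlying compacta to be the singletons $\{x_j\}$ keeps this transparent and avoids any ``swelling to compact neighborhoods'' step; alternatively one could use small compact neighborhoods of the $x_j$ and invoke \cite[30.C.10]{cech} as elsewhere in the paper. Note also that only $C^0$-neighborhoods are needed, so no estimates on derivatives enter.
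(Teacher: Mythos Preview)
Your argument is correct and follows essentially the same route as the paper's proof: both argue by contradiction, run an induction along a compact exhaustion to produce escaping witnesses $x_j$ with $f_{n_j}(x_j)\neq f(x_j)$, and then assemble a basic $C^0$-neighborhood of $f$ that excludes all $f_{n_j}$. The only cosmetic difference is that the paper takes the underlying compact family to be members $A_{m_i}$ of a prechosen locally finite exhaustion (so local finiteness is automatic), whereas you use the singletons $\{x_j\}$ and verify local finiteness directly; one small caveat is that your claim ``any compact subset of $M$ lies in some $L_m$'' requires the exhaustion to satisfy $L_k\subseteq\interior L_{k+1}$, which you should state explicitly (this is the standard form, cf.\ Lemma~\ref{dugundjifact}).
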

\begin{proof}
	For $ f \in \vsmooth (M,X) $, we will show that $ f $ cannot be a limit of $ \lbrace f_n \rbrace $ if for all compact $ K \subseteq M $ and all $ N \in \mathbb{N} $ there exists $ n \geq N $ such that $ \osupp{f}{f_n} \nsubseteq K $.

	Let \( \lbrace A_n \rbrace_{n \in \mathbb{N}} \) be a locally finite exhaustion of \( M \) by compact sets (exists by Lemma \ref{dugundjifact} since \( M \) is \( \sigma \)-compact), and for \( n \in \mathbb{N} \) set \( K_n = \bigcup_{i=1}^n A_i \).

	Construct a basic neighborhood of \( f \) recursively, using the following procedure. Let \( n_0 = 1 \), \( m_0 = 1 \). For \( i \in \mathbb{N} \), choose \( n_i > n_{i-1} \) such that \( \osupp{f}{f_{n_i}} \nsubseteq K_{m_{i-1}} \). By construction there exists \( m_i > m_{i-1} \) such that \( \osupp{f}{f_{n_i}} \cap \left( M \setminus K_{m_{i-1}} \right) \cap A_{m_i} \neq \emptyset \). Take any \( x \) in this nonempty set. Since \( f (x) \neq f_{n_i} (x) \), there exists a continuous seminorm \( p_i \) on \( E \) such that \( 2 \epsilon_i := p_i (f_{n_i} (x) - f(x)) > 0 \), and then
	\[
		f_{n_i} \notin \mathcal{N}_i := \mathcal{N}^0 \left( f; A_{m_i}, p_i, \epsilon_i \right).
	\]
	Now \( \mathcal{U} := \bigcap_{i \in \mathbb{N}} \mathcal{N}_i \) is a basic neighborhood of \( f \) such that for all \( N \in \mathbb{N} \) there exists \( n \geq N \) such that \( f_n \notin \mathcal{U} \). So the sequence \( \lbrace f_n \rbrace_{n \in \mathbb{N}} \) does not converge to \( f \).
\end{proof}
\begin{remark}
	One can easily prove the proposition above for \( E \) a locally convex manifold rather than a locally convex vector space, by ``hacking'' the compact sets \( A_i \) in the proof into smaller compact sets that are contained in charts.
\end{remark}
\begin{corollary}
	\label{supportcorollary}
	Let $ M $ be a finite-dimensional non-compact manifold and $ E \neq \lbrace 0 \rbrace $ a locally convex vector space. Then $ \vsmooth (M,E) $ with the vector space structure induced by pointwise operations is not a topological vector space.
\end{corollary}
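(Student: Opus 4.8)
The plan is to derive the failure of the topological-vector-space property directly from Proposition \ref{supportprop}. Recall that in any topological vector space, scalar multiplication $\RR \times \vsmooth(M,E) \to \vsmooth(M,E)$ is continuous; in particular, for a fixed $f \in \smooth(M,E)$ the sequence $\tfrac{1}{n} f$ must converge to $0$ as $n \to \infty$, since $\tfrac{1}{n} \to 0$ in $\RR$ and the map $(\lambda, g) \mapsto \lambda g$ is jointly continuous at $(0, f)$. So it suffices to produce a single $f \in \smooth(M,E)$ for which $\{\tfrac{1}{n} f\}_{n \in \mathbb{N}}$ does \emph{not} converge to $0$ in the very strong topology, and then invoke Proposition \ref{supportprop} to get a contradiction.

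First I would construct such an $f$. Since $M$ is non-compact and $\sigma$-compact, pick a sequence of points $\{x_k\}_{k \in \mathbb{N}}$ in $M$ that is locally finite (hence has no accumulation point, by non-compactness one can arrange this, e.g. escaping to infinity along a locally finite exhaustion). Since $E \neq \{0\}$, fix $v \in E$ with $v \neq 0$. Using a locally finite family of charts and bump functions around the $x_k$ (paracompactness of $M$), build $f \in \smooth(M,E)$ with $f(x_k) = v$ for all $k$; in particular $f(x_k) \neq 0$ for every $k$. Then for each $n$, the function $\tfrac{1}{n} f$ satisfies $(\tfrac{1}{n} f)(x_k) = \tfrac{1}{n} v \neq 0$ for all $k$, so $\osupp{0}{\tfrac{1}{n}f} \ni x_k$ for every $k$; that is, $\osupp{0}{\tfrac{1}{n}f}$ contains the unbounded, non-precompact set $\{x_k : k \in \mathbb{N}\}$ and is therefore not contained in any compact subset of $M$. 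Hence no compact $K$ and no index $N$ can satisfy $\osupp{0}{\tfrac{1}{n}f} \subseteq K$ for all $n \geq N$.

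Now I would put the pieces together: if $\vsmooth(M,E)$ were a topological vector space, then $\tfrac{1}{n} f \to 0$ in the very strong topology, so Proposition \ref{supportprop} (applied with the limit function $0$ and the sequence $f_n := \tfrac{1}{n} f$) would furnish a compact $K \subseteq M$ and $N \in \mathbb{N}$ with $\osupp{0}{f_n} \subseteq K$ for all $n \geq N$. This contradicts the previous paragraph. Therefore $\vsmooth(M,E)$ is not a topological vector space.

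I do not expect any serious obstacle here; the only mildly technical point is the explicit construction of $f$ with $f(x_k) \neq 0$ along an escaping sequence, which is a routine partition-of-unity argument using paracompactness of the finite-dimensional manifold $M$ (and could even be phrased in a single chart if $M$ has a chart domain that is non-relatively-compact, though the partition-of-unity version is cleaner and fully general). Everything else is immediate once Proposition \ref{supportprop} is invoked, so the corollary is essentially a formal consequence of it together with the definition of a topological vector space.
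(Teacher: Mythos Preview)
Your argument is correct and follows the same strategy as the paper: exhibit an $f$ for which $\tfrac{1}{n}f \not\to 0$ by invoking Proposition~\ref{supportprop}, contradicting continuity of scalar multiplication. The paper, however, avoids your partition-of-unity construction entirely by simply taking $f$ to be a non-zero \emph{constant} map $M \to E$; then $\osupp{0}{\tfrac{1}{n}f} = M$ for every $n$, which is not contained in any compact set since $M$ is non-compact.
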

\begin{proof}
	Let $ f \in \vsmooth (M,E) $ be a non-zero constant map. Then Proposition \ref{supportprop} shows that $ \lim_{\lambda \to 0} (\lambda f) \neq 0 = (\lim_{\lambda \to 0} \lambda) f $, hence scalar multiplication is not continuous.
\end{proof}
\begin{remark}
	\label{supportcorollaryremark}
	Although $ \vsmooth (M,E) $ is not a topological vector space, it is a topological group under pointwise addition by Lemma \ref{continuous addition}. And $ \vsmooth (M,\mathbb{R}) $ is a topological ring under the pointwise operations induced by addition and multiplication in $ \mathbb{R} $.
\end{remark}
\begin{definition}[The fine very strong topology]
	Define an equivalence relation $ \sim $ on $ \smooth (M,X) $ by declaring that $ f \sim g $ whenever $$ \csupp{f}{g} := \overline{\lbrace y \in M : f(y) \neq g(y) \rbrace} $$ is compact. Now refine the very strong topology on $ \smooth (M,X) $ by demanding that the equivalence classes are open in $ \smooth (M,X) $. In other words, equip $ \smooth (M,X) $ with the topology generated by the very strong topology and the equivalence classes. This is the \emph{fine very strong topology} on $ \smooth (M,X) $. We write $ \fsmooth (M,X) $ for $ \smooth (M,X) $ equipped with the fine very strong topology.
\end{definition}
\begin{remark}
	\label{fvs remark}
	Here is another way to look at the fine very strong topology. Start with $ \vsmooth (M,X) $ and equip the equivalence classes $ [f] $ with the subspace topology. Then $$ \fsmooth (M,X) = \bigsqcup_{[f] \in \smooth (M,X)/\sim} [f] $$ as topological spaces.
	Taking the family of all sets of the form $ \mathcal{U} \cap [f] $, where $ \mathcal{U} $ runs through the basic neighborhoods in $ \smooth (M,X) $ and $ [f] $ runs through the equivalence classes, yields a basis for the fine very strong topology on $ \smooth (M,X) $.
\end{remark}
\begin{remark}
	If $ f \in \smooth (M,X) $ is a proper map and $ f \sim \hat{f} $, then $ \hat{f} $ is also proper. Indeed, if $ K \subseteq X $ is compact, then $ \hat{f}^{-1} (K) \subseteq f^{-1}(K) \cup \csupp{f}{\hat{f}} $. Since closed subspaces of compact spaces are compact, $ \hat{f}^{-1} (K) $ is compact.
\end{remark}
We would obviously like the results of the previous sections to remain true in the fine very strong topology. Fortunately, it is easy to extend the results to this case using the following lemma.
\begin{lemma}
	\label{corollarylemma}
	Let $ T $ be a topological space, and $ \zeta \colon T \to \smooth (M,X) $ a function. If $ \zeta $ is continuous as a map to $ \vsmooth (M,X) $ and $ \zeta^{-1} ([f]) \subseteq T $ is open for all equivalence classes $ [f] \subseteq \smooth (M,X) $, then $ \zeta $ is continuous as a map to $ \fsmooth (M,X) $.
\end{lemma}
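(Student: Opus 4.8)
The plan is to exhibit a subbasis for the fine very strong topology and check that $\zeta$ pulls each subbasic open set back to an open set of $T$. By definition $\fsmooth (M,X)$ carries the topology generated by the basic neighborhoods of the very strong topology together with the $\sim$-equivalence classes $[f]$; hence these two families together form a subbasis of $\fsmooth (M,X)$. Continuity of $\zeta$ as a map into $\fsmooth (M,X)$ is therefore equivalent to $\zeta^{-1}$ of every such set being open in $T$.

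Now I would split into the two cases. If the subbasic set is a basic neighborhood $\mathcal{U}$ of the very strong topology, then $\zeta^{-1}(\mathcal{U})$ is open because $\mathcal{U}$ is open in $\vsmooth (M,X)$ and $\zeta$ is, by hypothesis, continuous as a map into $\vsmooth (M,X)$. If the subbasic set is an equivalence class $[f]$, then $\zeta^{-1}([f])$ is open precisely by the second hypothesis. This exhausts the subbasis, so $\zeta$ is continuous into $\fsmooth (M,X)$. Equivalently, one may work with the genuine basis of sets $\mathcal{U} \cap [f]$ provided by Remark \ref{fvs remark} and use the identity $\zeta^{-1}(\mathcal{U} \cap [f]) = \zeta^{-1}(\mathcal{U}) \cap \zeta^{-1}([f])$, which is a finite intersection of open subsets of $T$.

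There is no real obstacle here: the only point to get right is the bookkeeping of which family generates the fine very strong topology, and this is handed to us by the definition of $\fsmooth (M,X)$ and by Remark \ref{fvs remark}. Everything else is an immediate consequence of the fact that preimages commute with intersections and that a map is continuous as soon as preimages of subbasic opens are open.
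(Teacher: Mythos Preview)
Your proof is correct and takes essentially the same approach as the paper: the paper works directly with the basis $\mathcal{U}\cap[f]$ from Remark~\ref{fvs remark} and uses $\zeta^{-1}(\mathcal{U}\cap[f])=\zeta^{-1}(\mathcal{U})\cap\zeta^{-1}([f])$, which is exactly the argument you give in your second paragraph. Your additional subbasis phrasing is a harmless variant of the same idea.
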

\begin{proof}
	The map $ \zeta $ is continuous if preimages of basis elements are open. Basis elements for $ \fsmooth (M,X) $ are of the form $ \mathcal{U} \cap [f] $ for some basic neighborhood $ \mathcal{U} $ and some equivalence class $ [f] $, and $ \zeta^{-1}(\mathcal{U}\cap [f]) = \zeta^{-1}(\mathcal{U}) \cap \zeta^{-1} ([f]) $.
\end{proof}
\begin{proposition}
	\label{fsmoothprop}
	Theorem \ref{compiscont}, Proposition \ref{precomposition}, Proposition \ref{postcomposition}, Theorem \ref{product theorem}, and Corollary \ref{product corollary} still hold if we in every case replace the very strong topology with the fine very strong topology.
	
	In the cases that we consider $ \Prop_{\textup{vS}} (M,N) $, replace this with $ \Prop_{\mbox{fS}} (M,N) $, by which is meant the subset $ \Prop (M,N) \subseteq \fsmooth (M,N) $ equipped with the subspace topology.
\end{proposition}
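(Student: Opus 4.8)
The plan is to reduce everything to Lemma \ref{corollarylemma}. Since the fine very strong topology refines the very strong topology, each map in the list, being already known to be continuous into the relevant very strong mapping space, remains continuous into that very strong mapping space after re-equipping its domain with the fine very strong topology (the domain map merely factors through the identity $\fsmooth (M,X) \to \vsmooth (M,X)$, or a product of such). By Lemma \ref{corollarylemma} it then remains, for each such map $\zeta$, to verify that $\zeta^{-1}([g])$ is open for every equivalence class $[g]$ in the codomain, i.e.\ that $\zeta$ carries equivalence classes of its domain into equivalence classes of its codomain.

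I would first handle the composition map $\Gamma$, where the key point is the observation that if $f\colon M\to N$ is proper, $f\sim f'$ and $h\sim h'$, then $h'\circ f'\sim h\circ f$: any $x$ with $h'(f'(x))\ne h(f(x))$ lies in $\osupp{f}{f'}\cup f^{-1}(\osupp{h}{h'})$, whose closure sits inside the compact set $\csupp{f}{f'}\cup f^{-1}(\csupp{h}{h'})$ --- here properness of $f$ is used to see that $f^{-1}(\csupp{h}{h'})$ is compact --- and is therefore itself compact, being closed in a compact subset of a manifold. Hence if $h\circ f\sim g$ then $[f]\times[h]\subseteq\Gamma^{-1}([g])$; since every map equivalent to a proper map is proper (cf.\ the remark preceding Lemma \ref{corollarylemma}), $[f]$ is open in $\Prop_{\textup{fS}}(M,N)$ and $[h]$ is open in $\fsmooth (N,X)$, so $\Gamma^{-1}([g])$ is open and Lemma \ref{corollarylemma} yields the fine very strong version of Theorem \ref{compiscont}. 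Proposition \ref{precomposition} would then follow exactly as in the very strong case by writing $f^*=\Gamma\circ\iota_f$ with $\iota_f(h)=(f,h)$.

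For Proposition \ref{postcomposition} one cannot route through $\Gamma$ (the map $f\colon X\to Y$ need not be finite-dimensional), so I would apply Lemma \ref{corollarylemma} directly to $f_*$, continuous into $\vsmooth (M,Y)$ by Proposition \ref{postcomposition}, using that $h\sim h'$ implies $\osupp{f\circ h}{f\circ h'}\subseteq\osupp{h}{h'}$, hence $f\circ h\sim f\circ h'$; thus $(f_*)^{-1}([g])$ is a union of equivalence classes, and no properness is needed. For Theorem \ref{product theorem} I would use that $f\sim f'$ in $C^\infty(M,X_1\times X_2)$ precisely when $\pr_i\circ f\sim\pr_i\circ f'$ for $i=1,2$ (the difference set of $f,f'$ being the union of those of the components), so $\iota([f])=[\pr_1\circ f]\times[\pr_2\circ f]$: continuity of $\iota$ is the fine very strong Proposition \ref{postcomposition} applied to $\pr_1$ and $\pr_2$, while continuity of $\iota^{-1}$ follows from Lemma \ref{corollarylemma}, since $\iota^{-1}$ is continuous into $\vsmooth (M,X_1\times X_2)$ by Theorem \ref{product theorem} and $(\iota^{-1})^{-1}([f])=[\pr_1\circ f]\times[\pr_2\circ f]$ is open in $\fsmooth (M,X_1)\times\fsmooth (M,X_2)$. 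Finally, Corollary \ref{product corollary} reduces via this fine very strong product theorem to continuity of $\chi_1$ (constant, hence continuous) and of $\chi_2=\pr_2^*$, the latter by the fine very strong Proposition \ref{precomposition} since $\pr_2\colon Q\times M\to M$ is proper.

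I expect the only real obstacle to be the bookkeeping of difference sets in the composition step --- specifically, making sure that properness of $f$ is exactly what keeps $f^{-1}(\csupp{h}{h'})$ compact; the rest is formal, relying only on Lemma \ref{corollarylemma} and the fact that refining the domain topology preserves continuity.
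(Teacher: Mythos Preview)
Your proposal is correct and follows essentially the same route as the paper: both reduce each case to Lemma \ref{corollarylemma} via the same difference-set inclusions (in particular the key inclusion $\csupp{h\circ f}{\hat h\circ\hat f}\subseteq\csupp{f}{\hat f}\cup f^{-1}(\csupp{h}{\hat h})$ using properness of $f$), and both handle the product theorem by showing $\iota([f])=[\pr_1\circ f]\times[\pr_2\circ f]$. The only cosmetic difference is that you derive the fine very strong version of Proposition \ref{precomposition} by factoring through $\Gamma$, whereas the paper repeats the equivalence-class argument directly; both are immediate.
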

\begin{proof}
	The proof is case by case. In all cases except for the generalization of Theorem \ref{product theorem} and its corollary, it suffices by \ref{corollarylemma} to check that preimages of equivalence classes are open. Unless otherwise stated, letters (such as $ f $ or $ N $) are always assumed to have the same role here as in the statement of the corresponding result.

	\textit{Theorem \ref{compiscont} (the full composition map is continuous).} Suppose that $ f \sim \hat{f} $ and $ h \sim \hat{h} $. We have $ \csupp{h \circ f}{ \hat{h} \circ \hat{f} } \subseteq \csupp{f}{\hat{f}} \cup f^{-1}\left( \csupp{h}{\hat{h}} \right) $. The right hand side  is compact since $ f $ is proper, so $ \csupp{h \circ f}{ \hat{h} \circ \hat{f} } $ is a closed subset of a compact space, hence compact. By definition this means that $ h \circ f \sim \hat{h} \circ \hat{f} $. 
		
	Consider an equivalence class $ [g] \subseteq \fsmooth (M,X) $. By what we just observed, if $ h \circ f \sim g $ and $ \hat{f} \sim f $ and $ \hat{h} \sim h $, then $ \hat{h} \circ \hat{f} \sim g $. Hence $$ \Gamma^{-1} ([g]) = \bigcup_{h \circ f \sim g} [f] \times [h], $$ which is open.
	
	\textit{Proposition \ref{precomposition} (precomposition is continuous).} If $ h \sim \hat{h} $, then $ h \circ f \sim \hat{h} \circ f $ by the same argument as before. So for any equivalence class $ [g] \subseteq \smooth (M,X) $, we have $$ (f^*)^{-1} ([g]) = \bigcup_{h \circ f \sim g} [h]. $$
	
	\textit{Proposition \ref{postcomposition} (postcomposition is continuous).} If $ h, \hat{h} \in \smooth (M,X) $, then it is easy to see that $ \csupp{f \circ h}{f \circ \hat{h}} \subseteq \csupp{h}{\hat{h}} $. So if $ h \sim \hat{h} $, then $ f \circ h \sim f \circ \hat{h} $, since closed subsets of compact spaces are compact. It follows that for any equivalence class $ [g] \subseteq \smooth (M,X) $, we have $ (f_*)^{-1}([g]) = \bigcup_{f \circ h \sim g} [h]. $ 
	
	\textit{Theorem \ref{product theorem} (the product theorem).} For the same reasons as in the proof of the very strong version of the theorem, $ \iota $ is clearly a bijective continuous map. So by Lemma \ref{corollarylemma} it suffices to show that images of equivalence classes are open. 
	
	Observe that for $ f, \hat{f} \in \smooth (M, X_1 \times X_2) $, we have $ \csupp{f}{\hat{f}} = \csupp{\pr_1 \circ f}{\pr_1 \circ \hat{f}} \cup \csupp{\pr_2 \circ f}{\pr_2 \circ \hat{f}} $. Hence $ f \sim \hat{f} $ if and only if $ \pr_1 \circ f \sim \pr_1 \circ \hat{f} $ and $ \pr_2 \circ f \sim \pr_2 \circ \hat{f} $. Another way of stating this fact is $ \iota ([f]) = [\pr_1 \circ f] \times [\pr_2 \circ f] $ for all $ f \in \smooth (M,X_1 \times X_2) $.
	
	\textit{Corollary \ref{product corollary}.} Same proof as in the very strong case.
\end{proof}

\section{The manifold structure on smooth vector valued functions}\label{sect: smmfd}
Throughout this section, $ M $ is a finite-dimensional manifold, $ E $ is a locally convex vector space, and \( X \) is a locally convex manifold.

Recall from Corollary \ref{supportcorollary} that \( \vsmooth (M,E) \) with pointwise operations is not a locally convex vector space, in fact it is not even a topological vector space. Neither is \( \fsmooth (M,E) \). However, we will in this section make \( \fsmooth (M,E) \) into a locally convex manifold. This is a first step towards making \( \fsmooth (M,X) \) into a locally convex manifold (but we will not do this). The modeling space for \( \fsmooth (M,E) \) as a locally convex manifold is \( C_{\text{vS,c}}^\infty (M,E) \), defined below.

\begin{definition}
	We define $ C_{\text{vS,c}}^\infty (M,E) $ to be the subspace of $ \vsmooth (M,E) $ consisting of the functions with compact support, i.e. 
	$$ C_{\text{vS,c}}^\infty (M,E) = \lbrace f \in \vsmooth (M,E) : \mbox{$\csupp{f}{0}$ is compact} \rbrace $$ equipped with the subspace topology from $ \vsmooth (M,E) $.
	Note that $ C_{\text{vS,c}}^\infty (M,E) = [0] $ in $ \fsmooth (M,E) $.
\end{definition}

As a first step towards proving that \( C_{\text{vS,c}}^\infty (M,E) \) with pointwise operations is a locally convex vector space, we show that \( \smooth (M,E) \) with pointwise addition is a topological group in the very strong and fine very strong topologies.

\begin{lemma}
	\label{continuous addition}
	Addition
	\begin{align*}
		\Sigma \colon \smooth (M,E) \times \smooth (M,E) &\to \smooth (M,E), \quad (f,g) &\mapsto f+h = \left[ m \mapsto f(m) + h(m) \right]
	\end{align*}
	is continuous when $ \smooth (M,E) $ is equipped with the very strong topology or fine very strong topology.
\end{lemma}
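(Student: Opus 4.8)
The plan is to handle the two topologies in succession: the very strong case by a direct $\epsilon/2$-argument exploiting linearity, and the fine very strong case by reducing to it via Lemma \ref{corollarylemma}.

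For the very strong topology, I would first use the remark following the definition of the very strong topology (resp.\ Lemma \ref{lem: atlaschoice}) to reduce to basic neighborhoods whose target chart is $(E,\id_E)$. So fix $(f,g)$ and a basic neighborhood $\mathcal{U}=\bigcap_{i\in\mathbb{N}}\mathcal{N}^{r_i}(f+g;A_i,(U_i,\phi_i),p_i,\epsilon_i)$ of $f+g$, with $\{A_i\}_{i\in\mathbb{N}}$ locally finite. The key observation is that for fixed $(a,\alpha)$ the operator $\dd^{(k)}(-)(a;\alpha)$ is linear and each $p_i$ satisfies the triangle inequality (Lemma \ref{triangleinequality}), so that for all $h_1,h_2\in\smooth(M,E)$ and all $i$
\begin{align*}
\Vert(h_1+h_2)\circ\phi_i^{-1}-(f+g)\circ\phi_i^{-1}\Vert(r_i,\phi_i(A_i),p_i) &\le \Vert h_1\circ\phi_i^{-1}-f\circ\phi_i^{-1}\Vert(r_i,\phi_i(A_i),p_i)\\
&\quad{}+\Vert h_2\circ\phi_i^{-1}-g\circ\phi_i^{-1}\Vert(r_i,\phi_i(A_i),p_i).
\end{align*}
Consequently, setting $\mathcal{V}:=\bigcap_{i\in\mathbb{N}}\mathcal{N}^{r_i}(f;A_i,(U_i,\phi_i),p_i,\epsilon_i/2)$ and $\mathcal{W}:=\bigcap_{i\in\mathbb{N}}\mathcal{N}^{r_i}(g;A_i,(U_i,\phi_i),p_i,\epsilon_i/2)$ --- both basic neighborhoods, since the underlying compact family $\{A_i\}_{i\in\mathbb{N}}$ is unchanged and hence still locally finite --- one obtains $\Sigma(\mathcal{V}\times\mathcal{W})\subseteq\mathcal{U}$. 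Since such basic neighborhoods generate the very strong topology (Proposition \ref{basicnbhsisbasis}), this proves continuity of $\Sigma$ in the very strong topology.

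For the fine very strong topology, I would invoke Lemma \ref{corollarylemma} with $T=\fsmooth(M,E)\times\fsmooth(M,E)$ and $\zeta=\Sigma$. Because the fine very strong topology refines the very strong one, the previous paragraph already shows $\Sigma\colon T\to\vsmooth(M,E)$ is continuous, so it only remains to check that $\Sigma^{-1}([h])$ is open for every equivalence class $[h]\subseteq\smooth(M,E)$. For this, note the pointwise inclusion $\osupp{f_1+g_1}{f+g}\subseteq\osupp{f_1}{f}\cup\osupp{g_1}{g}$, whence $\csupp{f_1+g_1}{f+g}\subseteq\csupp{f_1}{f}\cup\csupp{g_1}{g}$; thus if $f_1\sim f$ and $g_1\sim g$ then $f_1+g_1\sim f+g$ (a closed subset of the compact union of two compact sets is compact). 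Hence $\Sigma^{-1}([h])=\bigcup_{f+g\sim h}[f]\times[g]$, a union of sets open in $T$, so it is open, and Lemma \ref{corollarylemma} gives continuity in the fine very strong topology.

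I do not expect a real obstacle here; the only matters requiring a little care are the reduction to the identity target chart and the remark that re-using the family $\{A_i\}_{i\in\mathbb{N}}$ preserves local finiteness, so that $\mathcal{V}$ and $\mathcal{W}$ are genuine basic neighborhoods and not merely intersections of elementary ones.
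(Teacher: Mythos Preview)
Your proof is correct, but it takes a genuinely different route from the paper's. The paper argues via the product theorem (Theorem~\ref{product theorem}) and continuity of postcomposition (Proposition~\ref{postcomposition}): it identifies $\vsmooth(M,E)\times\vsmooth(M,E)\cong\vsmooth(M,E\times E)$ and then observes that $\Sigma$ is the pushforward $S_*$ along the smooth addition $S\colon E\times E\to E$. For the fine very strong case, the paper simply invokes the fine very strong versions of those two results (established in Proposition~\ref{fsmoothprop}).

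Your direct $\epsilon/2$ argument is more elementary and entirely self-contained: it avoids the nontrivial machinery behind Proposition~\ref{postcomposition} (which in the paper routes through the compact-open $C^\infty$-topology and \cite{glockomega}). The paper's approach, by contrast, is more conceptual and illustrates that continuity of $\Sigma$ is a formal consequence of two structural facts already in hand. Your explicit treatment of the fine very strong case via Lemma~\ref{corollarylemma} is also more detailed than the paper's one-line ``carries over verbatim,'' though both amount to the same thing.
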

\begin{proof}
	We prove the assertion only for the very strong topology as the proof carries over verbatim to the fine very strong topology.
	By Theorem \ref{product theorem} there is a canonical homeomorphism $ \iota \colon \vsmooth (M,E) \times \vsmooth (M,E) \cong \vsmooth (M,E \times E ) $. Since addition $ S \colon E \times E \to E $ in $ E $ is smooth, induced postcomposition $ S_* \colon \vsmooth (M,E\times E) \to \vsmooth (M,E) $ is continuous.
	Hence $ \Sigma = S_* \circ \iota $ is continuous.
\end{proof}

Once we have established the following proposition, it will be easy to make \( \fsmooth (M,E) \) into a locally convex manifold modeled on \( C_{\text{vS,c}}^\infty (M,E) \). The hard work lies here.
\begin{proposition}
	\label{locally convex vs prop}
	The topological space $ C_{\text{vS,c}}^\infty (M,E) $ with vector space structure induced by pointwise operations in $ E $ is a locally convex vector space.
\end{proposition}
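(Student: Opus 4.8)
The plan is to verify three things: (i) that $0$ has a neighbourhood base of convex sets, (ii) that addition is continuous, and (iii) that scalar multiplication is continuous; granting (i)--(iii), $C_{\text{vS,c}}^\infty(M,E)$ is a topological vector space with a base of convex zero-neighbourhoods, hence a locally convex vector space. First I would record a convenient base at $0$. By the remark following Definition \ref{basicnbh} (which invokes Lemma \ref{lem: atlaschoice}) together with Proposition \ref{basicnbhsisbasis}, the very strong topology on $\smooth(M,E)$ admits as a base at $0$ the sets $\bigcap_{i\in\mathbb{N}}\mathcal{N}^{r_i}(0;A_i,(U_i,\phi_i),(E,\id_E),p_i,\epsilon_i)$ with $\{A_i\}_{i\in\mathbb{N}}$ locally finite. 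For each $i$ the assignment $q_i\colon h\mapsto\Vert h\circ\phi_i^{-1}\Vert(r_i,\phi_i(A_i),p_i)$ is the seminorm of Lemma \ref{triangleinequality} precomposed with the linear map $h\mapsto(h|_{U_i})\circ\phi_i^{-1}$, hence a seminorm on $\smooth(M,E)$, and $\mathcal{N}^{r_i}(0;A_i,(U_i,\phi_i),(E,\id_E),p_i,\epsilon_i)=\{q_i<\epsilon_i\}$. Thus every such basic zero-neighbourhood is an intersection of seminorm balls, so convex, and intersecting with the linear subspace $C_{\text{vS,c}}^\infty(M,E)$ preserves convexity; this gives (i). For (ii), the composite $C_{\text{vS,c}}^\infty(M,E)^2\hookrightarrow\vsmooth(M,E)^2\xrightarrow{\Sigma}\vsmooth(M,E)$ is continuous by Lemma \ref{continuous addition} and takes values in $C_{\text{vS,c}}^\infty(M,E)$ (a sum of compactly supported maps is compactly supported), hence corestricts to a continuous addition; in particular $C_{\text{vS,c}}^\infty(M,E)$ is a topological group, so translations are homeomorphisms of it.

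The real work is (iii), and this is exactly where compact support enters, since scalar multiplication is \emph{not} continuous on $\vsmooth(M,E)$ by Corollary \ref{supportcorollary}, the obstruction being that a locally finite family of compacta can ``reach to infinity''. To prove joint continuity at $(\lambda_0,f_0)$, I would fix a convex basic zero-neighbourhood $\mathcal{U}=\bigcap_i\{q_i<\epsilon_i\}\cap C_{\text{vS,c}}^\infty(M,E)$ with locally finite underlying family $\{A_i\}$, and produce $\delta>0$ and a zero-neighbourhood $\mathcal{V}$ of $C_{\text{vS,c}}^\infty(M,E)$ such that $\lambda f-\lambda_0 f_0\in\mathcal{U}$ whenever $|\lambda-\lambda_0|<\delta$ and $f\in f_0+\mathcal{V}$. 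Write $\lambda f-\lambda_0 f_0=\lambda(f-f_0)+(\lambda-\lambda_0)f_0$ and estimate via the triangle inequality: $q_i(\lambda f-\lambda_0 f_0)\le|\lambda|\,q_i(f-f_0)+|\lambda-\lambda_0|\,q_i(f_0)$. Now $f_0$ has compact support $K_0$; since $\{A_i\}$ is locally finite, only finitely many $i$ satisfy $A_i\cap K_0\neq\emptyset$, and $q_i(f_0)=0$ for all other $i$ because $f_0$ vanishes on the neighbourhood $M\setminus K_0$ of $A_i$. Hence a single $\delta>0$ makes $|\lambda-\lambda_0|\,q_i(f_0)<\epsilon_i/2$ for all $i$ at once, and it also bounds $|\lambda|\le B:=|\lambda_0|+\delta$. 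Then set $\mathcal{V}:=\bigcap_i\{q_i<\epsilon_i/(2B)\}\cap C_{\text{vS,c}}^\infty(M,E)$, again a zero-neighbourhood with the same locally finite family: for $f=f_0+g$ with $g\in\mathcal{V}$ one obtains $q_i(\lambda f-\lambda_0 f_0)<B\cdot\epsilon_i/(2B)+\epsilon_i/2=\epsilon_i$ for every $i$, and $\lambda f-\lambda_0 f_0$ has compact support, so $\lambda f-\lambda_0 f_0\in\mathcal{U}$. Since translations are homeomorphisms, the sets $\lambda_0 f_0+\mathcal{U}$ form a base at $\lambda_0 f_0$, which proves continuity of scalar multiplication.

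Putting (i)--(iii) together, $C_{\text{vS,c}}^\infty(M,E)$ is a topological vector space with a base of convex zero-neighbourhoods, hence locally convex; Hausdorffness is inherited from $E$, as a nonzero $f$ is kept out of the zero-neighbourhood $\mathcal{N}^0(0;A,(U,\phi),(E,\id_E),p,\epsilon)$ for $A$ a compact chart neighbourhood of a point where $f$ does not vanish and $p$ a suitable continuous seminorm. The only genuinely delicate point is the scalar-multiplication estimate above: the argument rests on converting ``a locally finite family meets the compact support $K_0$ only finitely often'' into a \emph{uniform} bound over the whole index set, which is precisely the mechanism whose failure is exploited in Corollary \ref{supportcorollary}.
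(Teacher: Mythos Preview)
Your proof is correct and follows essentially the same approach as the paper: continuity of addition via Lemma \ref{continuous addition}, the scalar-multiplication estimate hinging on the fact that the compact support of $f_0$ meets only finitely many members of the locally finite family $\{A_i\}$, and local convexity via the observation that elementary neighbourhoods are seminorm balls. The only presentational differences are that the paper works directly with a neighbourhood of $\lambda f$ rather than translating to $0$, establishes local convexity by explicitly exhibiting a generating family of continuous seminorms $q(f)=\sup_i \epsilon_i\Vert f\Vert(r_i,A_i,(U_i,\phi_i),p_i)$ (well-defined precisely by the same finiteness argument), and deduces Hausdorffness from the finer compact-open $C^\infty$-topology rather than directly.
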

\begin{proof}
	In Lemma \ref{continuous addition} we showed that addition is continuous, and the topological space $ C_{\text{vS,c}}^\infty (M,E) $ is Hausdorff since the compact open \( \smooth \)-topology on \( \smooth(M,E) \) is Hausdorff and the very strong topology is finer than the compact open \( \smooth \)-topology. It is therefore only necessary to check that scalar multiplication is continuous in order to conclude that $ C_{\text{vS,c}}^\infty (M,E) $ is a topological vector space. Finally, we must verify that this topological vector space is locally convex.
	
	\textit{Scalar multiplication $\mu \colon \mathbb{R} \times C_{\text{vS,c}}^\infty (M,E) \to C_{\text{vS,c}}^\infty (M,E),\  
		(\lambda, f) \mapsto \lambda f $ is continuous.} 
	Let $ (\lambda,f) \in \mathbb{R} \times C_{\text{vS,c}}^\infty (M,E) $, and consider a basic neighborhood $ \mathcal{V} = \bigcap_{i \in \Lambda} \mathcal{N}_i $ of $ \lambda f $, where each $\mathcal{N}_i = \mathcal{N}^{r_i} (\lambda f; A_i, (U_i,\phi_i),p_i,\epsilon_i) $ is an elementary neighborhood of $ \lambda f $. We will show that there exists open sets $ I \subseteq \RR $ and $ \mathcal{U} \subseteq \vsmooth (M,E) $ such that $ \mu (I \times \mathcal{U}) \subseteq \mathcal{V} $. 
	
	Since $ \csupp{f}{0} $ is compact, only finitely many $ A_i $ intersect $ \csupp{f}{0} $, say only for $ i = i_1, \dots , i_n $. Define $ \epsilon := \min (\epsilon_{i_1},\dots,\epsilon_{i_n}) $. 
	
	Set $m_1 := \max\left\{\sup_{1 \leq j \leq n} \Vert f \circ \phi_{i_j}^{-1} \Vert (r_{i_j}, \phi_{i_j}(A_{i_j}),p_{i_j}), 1\right\}$ and 
	\[ I := B_{\frac{\epsilon}{2 m_1 }}^1 (\lambda) = \left] \lambda - \frac{\epsilon}{2 m_1}, \lambda + \frac{\epsilon}{2 m_1} \right[ . \] 
	
	Define $ m_2 := \sup \lbrace | t | : t \in I \rbrace $, and set $ \mathcal{U} := \bigcap_{i \in \Lambda} \mathcal{N}^{r_i} \left( f;A_i,(U_i,\phi_i),p_i,\frac{\epsilon_i}{2 m_2} \right). $
	Suppose $ (\lambda',f') \in I \times \mathcal{U} $. 
	For all $ i \in \Lambda $, $ x \in A_i $, $ 1 \leq k \leq r_i $, and $ \alpha \in \lbrace e_1, \dots , e_{\dim M} \rbrace^k $, we have
	\begin{align*}
		& p_i \left( \dd^{(k)} (\lambda' f' \circ \phi_i^{-1} - \lambda f \circ \phi_i^{-1})(\phi_i(x);\alpha) \right) \\
		\leq& | \lambda' | p_i \left( \dd^{(k)} (f' \circ \phi_i^{-1} - f \circ \phi_i^{-1})(\phi_i(x);\alpha) \right) + | \lambda' - \lambda | p_i \left( \dd^{(k)} (f \circ \phi_i^{-1})(\phi_i(x);\alpha) \right) \\
		<& \frac{\epsilon_i}{2} + \frac{\epsilon}{2 m_1} p_i \left( \dd^{(k)} (f \circ \phi_i^{-1})(\phi_i(x);\alpha) \right) =: C.
	\end{align*}
	If $ i \notin \lbrace i_1,\dots,i_n \rbrace $, then $ p_i \left( \dd^{(k)} (f \circ \phi_i^{-1})(\phi_i(x);\alpha) \right) = 0 $, in which case $ C \leq \epsilon_i $. And if $ i \in \lbrace i_1, \dots,i_n \rbrace $, then $ \epsilon \leq \epsilon_i $ and $ p_i \left( \dd^{(k)} (f \circ \phi_i^{-1})(\phi_i(x);\alpha) \right) \leq m_1 $, in which case we still have $ C \leq \epsilon_i $. Hence $ \lambda' f' \in \mathcal{V} $, and $ \mu (I \times \mathcal{U}) \subseteq \mathcal{V} $. Consequently, $\mu$ is continuous.
	
	\textit{The space is locally convex.} We have now established that $ C_{\mbox{vS,c}}^\infty (M,E) $ is a topological vector space. It remains to see that this topological vector space is locally convex. For $ r \in \mathbb{N}_0 $, $ (U,\phi) $ a chart on $ M $, $ A \subseteq U $ compact, and $ p $ a continuous seminorm on $ E $, define
	\begin{align*}
		\Vert\cdot \Vert(r,A,(U,\phi),p) \colon C_{\mbox{vS,c}}^\infty (M,E) \to [0,\infty), \quad f \mapsto \Vert f \circ \phi^{-1} \Vert (r,\phi_i (A_i),p) 
	\end{align*}
	This is a seminorm on $ C_{\mbox{vS,c}}^\infty (M,E) $. Consider a family $ \lbrace \Vert\cdot \Vert (r_i,A_i,(U_i,\phi_i),p_i) \rbrace_{i\in \Lambda} $ of such seminorms, where $ \lbrace A_i \rbrace_{i \in \Lambda} $ is locally finite. For some family $ \lbrace \epsilon_i \rbrace_{i \in \Lambda} $ define $ q \colon C_{\mbox{vS,c}}^\infty (M,E) \to [0,\infty) $ by \[ q(f) = \sup_{i\in\Lambda} \epsilon_i \Vert f \Vert (r_i,A_i,(U_i,\phi_i),p_i). \] 
	Every $ f \in C_{\mbox{vS,c}}^\infty (M,E) $ has compact support, so $ \supp(f,0) $ intersects only finitely many of the $ A_i $, from which it follows that $ \Vert f \Vert (r_i,A_i,(U_i,\phi_i),p_i) \neq 0 $ for only finitely many $ i \in \Lambda $. Hence $ q(f) < \infty $, so $ q $ is well-defined. Clearly $ q $ is a seminorm. Also $ q $ is continuous as for all $ \lambda > 0 $, the preimage $ q^{-1} [0,\lambda) $ is a basic neighborhood of $ 0 $, e.g.\ 
	\[ q^{-1}[0,1) = \left( \bigcap_{i \in \Lambda } \mathcal{N}^{r_i} (0;A_i,(U_i,\phi_i),p_i,\epsilon_i) \right) \cap C_{\text{vS,c}}^\infty (M,E). \] So every basic neighborhood of 0 arises as a preimage of a continuous seminorm. Consequently, $ C_{\text{vS,c}}^\infty (M,E) $ is locally convex (see \cite[\S 18]{koethe}).
\end{proof}

We will now provide an alternative description of the topology on $ C_{\text{vS,c}}^\infty (M,E) $ as an inductive limit of certain locally convex spaces.
This characterization also implies that $ C_{\text{vS,c}}^\infty (M,E) $ is a locally convex space (thus providing an elegant proof of Proposition \ref{locally convex vs prop}).
Note however: Though the proof of Proposition \ref{locally convex vs prop} is a bit cumbersome, it is also completely elementary and does not use auxiliary results on inductive limits.

\begin{definition}
Let $K \subseteq M$ be a compact subset and $E$ be a locally convex space.
Then we define 
  \begin{displaymath}
  C^\infty_K (M,E) := \{f\in C^\infty (M,E) \mid \supp (f,0) \subseteq K\}                                                          
  \end{displaymath}
 and topologize this space with the compact open $C^\infty$-topology, i.e.\ the topology generated by the subbase $\mathcal{N} \cap C^\infty_K (M,E)$ where $\mathcal{N}$ runs through all elementary neighborhoods of $C^\infty_{\text{vS}} (M,E)$.
 Recall from \cite[Proposition 4.19]{glockomega} that $C^\infty_K (M,E)$ is a locally convex vector space.
\end{definition}

\begin{remark}
 Since all functions $C^\infty_K (M,E)$ have compact support contained in $K$ one can prove that the compact open $C^\infty$-topology coincides with the subspace topologies induced by $\vsmooth (M,E)$ and $\fsmooth (M,E)$.
 However, we will not need this. 
\end{remark}

Denote by $\mathcal{K} (M)$ the set of compact subsets of $M$. 
Observe that as sets $C^\infty_{\text{vS,c}} (M,E) = \bigcup_{K \in \mathcal{K} (M)} C^\infty_K (M,E)$.
We claim that the topology on the compactly supported functions is determined by the smaller locally convex spaces:
To see this, recall that with respect to inclusion, $\mathcal{K} (M)$ is a directed set. 
Further, for $K,L \in \mathcal{K} (M)$ with $K \subseteq L$ the canonical inclusion $\iota_K^L\colon C^\infty_K (M,E) \rightarrow C^\infty_L (M,E)$ is continuous linear by definition of the topology.
Hence we can form the locally convex inductive limit $\displaystyle \lim_{\rightarrow} C^\infty_K (M,E)$ (cf.\ \cite[\S 19 3.]{koethe}) of the family $\{C^\infty_K (M,E)\}_{\mathcal{K} (M)}$ (with respect to the canonical inclusions).

\begin{lemma}\label{lem: indlim}
 Let $E$ be a locally convex space, then as locally convex spaces
  \begin{displaymath}
   C^\infty_{\text{vS,c}} (M,E) = \lim_{\rightarrow} C^\infty_K (M,E).
  \end{displaymath}
\end{lemma}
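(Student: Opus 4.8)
## Proof Proposal

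The plan is to show the two locally convex spaces coincide by checking that they have the same underlying set (already observed) and the same topology, and the cleanest way to pin down the topology of an inductive limit is via its universal property: a linear map out of $\lim_{\rightarrow} C^\infty_K(M,E)$ is continuous if and only if its restriction to each $C^\infty_K(M,E)$ is continuous. So first I would record that the identity map $\operatorname{id}\colon \lim_{\rightarrow} C^\infty_K(M,E) \to C^\infty_{\text{vS,c}}(M,E)$ is continuous: by the universal property this amounts to checking that each inclusion $C^\infty_K(M,E) \hookrightarrow C^\infty_{\text{vS,c}}(M,E)$ is continuous, and this is essentially immediate since the subspace topology on $C^\infty_K(M,E)$ inherited from $\vsmooth(M,E)$ is coarser than (in fact, as remarked in the excerpt, equal to) the compact open $C^\infty$-topology that $C^\infty_K(M,E)$ carries as a member of the inductive system.

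The substantive direction is continuity of $\operatorname{id}\colon C^\infty_{\text{vS,c}}(M,E) \to \lim_{\rightarrow} C^\infty_K(M,E)$, i.e.\ that every $0$-neighborhood of the inductive limit is a $0$-neighborhood in the very strong subspace topology. Here I would use the standard description of a basis of $0$-neighborhoods in a locally convex inductive limit: the absolutely convex hulls $W = \overline{\Gamma}\big(\bigcup_{K} W_K\big)$ of unions of $0$-neighborhoods $W_K \subseteq C^\infty_K(M,E)$, one for each compact $K$. Fix a locally finite exhaustion $\{A_n\}_{n\in\mathbb{N}}$ of $M$ by compact sets (Lemma \ref{dugundjifact}), and set $K_n = \bigcup_{i=1}^n A_i$. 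The $W_{K_n}$ can be taken of the form $C^\infty_{K_n}(M,E) \cap \mathcal{N}^{r_n}(0; B_n, (U_n,\phi_n), p_n, \epsilon_n)$ for suitable data, or finite intersections thereof. The point is then to assemble from this countable cofinal chain of data a single \emph{basic} very strong neighborhood $\mathcal{V} = \bigcap_{n} \mathcal{N}^{r_n}(0;A_n,(U_n,\phi_n),p_n,\delta_n) \cap C^\infty_{\text{vS,c}}(M,E)$ — whose underlying compact family $\{A_n\}$ is locally finite, hence legitimate — and to verify $\mathcal{V} \subseteq W$. Given $f \in \mathcal{V}$ with compact support, its support meets only finitely many $A_n$, so $f$ lies in some $C^\infty_{K_N}(M,E)$; shrinking the $\delta_n$ appropriately (relative to the finitely many constraints that could bind on $K_N$, using that the cofinal chain $\{K_n\}$ exhausts all compact sets) forces $f$ into $W_{K_N} \subseteq W$. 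One should also invoke the elementary fact that on a $\sigma$-compact $M$ every locally finite family, hence every basic neighborhood, can be indexed by $\mathbb{N}$, so restricting attention to the cofinal chain $\{K_n\}$ loses nothing.

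The main obstacle is the combinatorial bookkeeping in the last step: an arbitrary inductive-limit $0$-neighborhood $W$ is built from \emph{one} $W_K$ for \emph{every} compact $K$, not just for the $K_n$, and one must argue that controlling $W_{K_n}$ along the cofinal chain suffices — this uses that every $f \in C^\infty_{\text{vS,c}}$ has support inside some $K_n$, together with the compatibility of the $W_K$ under the canonical inclusions (one may, without loss of generality, replace $W_L$ by $W_L \cap C^\infty_L(M,E)$ and arrange $\iota_K^L(W_K) \subseteq W_L$). A secondary subtlety is that the $W_K$ are only \emph{neighborhoods}, not necessarily basic elementary ones, so I would first pass to elementary-type neighborhoods inside each $W_{K_n}$ using that the compact open $C^\infty$-topology on $C^\infty_{K_n}(M,E)$ has the traces of elementary neighborhoods as a subbasis, and only finitely many such traces are needed per $K_n$. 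Alternatively — and this may be the slicker write-up — one can avoid the explicit estimate entirely by citing the general fact that for a strict (or at least ``regular'') countable inductive limit of the kind arising here the colimit topology agrees with the one induced from any reasonable ambient space in which the union sits, but since the excerpt emphasizes elementary self-contained arguments, I would carry out the direct construction of $\mathcal{V}$ as above.
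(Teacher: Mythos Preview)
Your proposal is correct in outline but takes a genuinely different route from the paper. The paper's proof is a one-line citation: it invokes \cite[Proposition 8.13(d)]{glockomega}, which (for $\sigma$-compact $M$) directly asserts that a basis of $0$-neighbourhoods for the locally convex inductive limit $\displaystyle\lim_{\rightarrow} C^\infty_K(M,E)$ is given precisely by the traces of the basic very strong neighbourhoods. No further argument is supplied. By contrast, you sketch a self-contained proof via the universal property: continuity of each inclusion $C^\infty_K(M,E)\hookrightarrow C^\infty_{\text{vS,c}}(M,E)$ for one direction, and an explicit construction of a basic vS-neighbourhood inside an arbitrary inductive-limit $0$-neighbourhood for the other. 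Your approach buys independence from the external reference and makes transparent \emph{why} the two topologies agree; the paper's approach buys brevity.

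Two small remarks. First, your phrasing in the easy direction is slightly inverted: a priori the very strong topology is \emph{finer} than the compact open $C^\infty$-topology, so the vS-subspace topology on $C^\infty_K(M,E)$ is a priori finer (not coarser) than the compact open $C^\infty$-topology; the inclusion is continuous only because they \emph{coincide}, via the observation that a locally finite family of compact sets meets $K$ in finitely many members, reducing a basic vS-neighbourhood to a finite intersection of elementary ones on $C^\infty_K$. Second, the ``combinatorial bookkeeping'' you flag in the hard direction is exactly what Gl\"ockner's Proposition~8.13(d) packages; carrying it out by hand is doable along the lines you indicate (cofinal chain $K_n$, finitely many elementary constraints per step, convexity to absorb the assembly), but it is genuinely a page of detail rather than a remark --- which is presumably why the authors chose to cite rather than reprove it.
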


\begin{proof}
 Since as sets $C^\infty_{\text{vS,c}} (M,E) = \displaystyle\lim_{\rightarrow} C^\infty_K (M,E)$, we only have to prove that the topologies coincide.
 However, since $M$ is $\sigma$-compact, \cite[Proposition 8.13 (d)]{glockomega} implies that a basis for the inductive limit topology on $C^\infty_{\text{vS,c}} (M,E) = \displaystyle\lim_{\rightarrow} C^\infty_K (M,E)$ is given by the basic neighborhoods of the very strong topology.
\end{proof}

\begin{proposition}
	For each class $ [f] $ in $ \fsmooth (M,E) $ define $ \phi_{[f]} \colon [f] \to C_{\text{vS,c}}^\infty (M,E) $ by $ \phi_{[f]}(g) = g-f $. 
	Then $ \mathcal{A} = \lbrace (\phi_{[f]},[f]) \rbrace_{f \in \smooth (M,E)} $ is a smooth atlas for $ \fsmooth (M,E) $. Hence $ \fsmooth(M,E) $ is a smooth manifold modeled on $ C_{\text{vS,c}}^\infty (M,E) $.
\end{proposition}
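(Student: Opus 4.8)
The plan is to check the three chart conditions in turn: each $\phi_{[f]}$ is a homeomorphism onto the model space $C_{\text{vS,c}}^\infty(M,E)$ (which is a locally convex space by Proposition~\ref{locally convex vs prop}, and in particular an open subset of itself), the chart domains $[f]$ cover $\fsmooth(M,E)$, and the transition maps are smooth in Bastiani's sense.

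First I would record the elementary observations. For $g \in [f]$ one has $\csupp{g-f}{0} = \csupp{g}{f}$, which is compact, so $\phi_{[f]}$ indeed takes values in $C_{\text{vS,c}}^\infty(M,E)$; it is clearly injective, and it is surjective because for $h \in C_{\text{vS,c}}^\infty(M,E)$ the map $f+h$ differs from $f$ only on the compact set $\csupp{h}{0}$, so $f+h \in [f]$ and $\phi_{[f]}(f+h) = h$. Since the $[f]$ are precisely the $\sim$-classes and $g \in [g]$ for every $g$, the chart domains cover $\fsmooth(M,E)$.

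The one point requiring a little care — and the main (if modest) obstacle — is that $\phi_{[f]}$ is a homeomorphism, because three different subspace topologies have to be lined up. Here I would use that translation $T_{-f}\colon \vsmooth(M,E)\to\vsmooth(M,E)$, $g\mapsto g-f$, is a homeomorphism: it is the composite of the continuous map $g\mapsto(g,-f)$ with the continuous addition $\Sigma$ of Lemma~\ref{continuous addition}, and its inverse $T_f$ is continuous for the same reason. A homeomorphism restricts to a homeomorphism between subspaces, and $T_{-f}$ carries $[f]$ bijectively onto $C_{\text{vS,c}}^\infty(M,E)$; the subspace topology on $[f]$ induced from $\vsmooth(M,E)$ agrees with the one induced from $\fsmooth(M,E)$ by Remark~\ref{fvs remark}, while the subspace topology on $C_{\text{vS,c}}^\infty(M,E)$ induced from $\vsmooth(M,E)$ is by definition its own topology. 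Hence $\phi_{[f]} = T_{-f}|_{[f]}$ is a homeomorphism onto the model space.

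Finally I would dispatch the transition maps. If $[f]\neq[f']$ then $[f]\cap[f']=\emptyset$, so there is nothing to verify. If $f\sim f'$, the transition $\phi_{[f']}\circ\phi_{[f]}^{-1}$ is the self-map $h\mapsto h+(f-f')$ of $C_{\text{vS,c}}^\infty(M,E)$, and $f-f'\in C_{\text{vS,c}}^\infty(M,E)$ since $f\sim f'$; translation by a fixed vector is affine, hence $C^\infty$. Thus $\mathcal{A}$ is a smooth atlas. Combined with the fact that $\fsmooth(M,E)$ is Hausdorff — it is finer than the compact open $\smooth$-topology, which is Hausdorff, exactly as argued for $C_{\text{vS,c}}^\infty(M,E)$ in the proof of Proposition~\ref{locally convex vs prop} — this exhibits $\fsmooth(M,E)$ as a smooth manifold modeled on $C_{\text{vS,c}}^\infty(M,E)$.
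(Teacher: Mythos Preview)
Your proof is correct and follows essentially the same route as the paper: both arguments use Lemma~\ref{continuous addition} to see that translation by a fixed map is a homeomorphism (hence each $\phi_{[f]}$ is a chart homeomorphism), and both observe that the transition maps are affine translations, hence smooth. You are simply more explicit than the paper in lining up the subspace topologies via Remark~\ref{fvs remark} and in checking the Hausdorff condition, which the paper's proof omits.
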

\begin{proof}
	We will first show that every chart $ \phi_{[f]} $ is a homeomorphism. First of all, note that $ \phi_{[f]} $ is well-defined since $ g - f $ is smooth and compactly supported for $ g \in [f] $. It is bijective with inverse $ \phi_{[f]}^{-1} (h) = h + f $. Both $ \phi_{[f]} $ and $ \phi_{[f]}^{-1} $ are continuous by Lemma \ref{continuous addition}.
	
	The chart domains of $ \mathcal{A} $ cover $ \fsmooth $, whence we have to check that chart transformations are smooth. Let $ (\phi_{[f]},[f]) $ and $(\phi_{[g]},[g]) $ be charts with $ [f] \cap [g] \neq \emptyset $.
	Then $ [f] = [g] $ and $ \phi_{[g]} \circ \phi_{[f]}^{-1}(h) = h + f - g $, whence it is smooth in $ h $ as addition in $C_{\text{vS,c}}^\infty (M,E)$ is so.
\end{proof}
Structurally, the manifold $ \fsmooth (M,E) $ is just a collection of (affine) copies of $ C_{\text{vS,c}}^\infty (M,E)$. For this reason, it is also called in \cite{michor} a \emph{local topological affine space}.

To construct a manifold structure on $ \fsmooth (M,X) $ for an arbitrary locally convex manifold $X$ one needs a so called \emph{local addition} on $X$ (cf.\ \cite{michor,KM97}).
A local addition replaces the vector space addition. 
It allows to ``smoothly choose'' charts on $X$ (see \cite{stacey} for more information). 
The details are similar to \cite[Section 10]{michor} but require certain analytical tools (e.g.\ a suitable version of the $\Omega$-Lemma, \cite[Appendix F]{glockomega})\footnote{To apply the $\Omega$-Lemma as stated in \cite{glockomega}, one needs a topology on spaces of compactly supported sections in vector bundles. In ibid.\ the compact open $C^\infty$-topology is used, however by arguments similar to Lemma \ref{lem: indlim} one proves that this topology coincides with the very strong topology.}.

\section{Application to bisection groups}
In this section we use our results on the very strong and the fine very strong topology to turn certain groups into topological groups.
The groups envisaged here are the bisection groups associated to certain Lie groupoids.
A reference on (finite-dimensional) Lie groupoids is \cite{mackenzie}, see \cite{Schmeding2015,SchmedingWockel15} for infinite-dimensional Lie groupoids.
\begin{definition}[Lie groupoid]
	Let $ M $ be a finite-dimensional smooth manifold and $ G $ a smooth manifold modeled on a locally convex vector space. Then a groupoid $ \mathcal{G} = (G \rightrightarrows M) $ with source projection $ \alpha \colon G \to M $ and target projection $ \beta \colon G \to M $ is a \emph{(locally convex) Lie groupoid} if $ \alpha $ and $ \beta $ are smooth submersions (i.e. locally projections), partial multiplication $ m \colon G \times_{\alpha,\beta} G \to G $ is smooth, object inclusion $ 1 \colon M \to G $ is smooth, and inversion $ i \colon G \to G $ is smooth.
\end{definition}
\begin{definition}[Bisection group]
	The \emph{group of bisections} $ \Bis (\mathcal{G}) $ of a Lie groupoid $ \mathcal{G} = (G \rightrightarrows M) $ is the set of sections $ \sigma \colon M \to G $ of $ \alpha $ such that $ \beta \circ \sigma $ is a diffeomorphism of $ M $. The group operation $ \star $ is given by $$ (\sigma \star \tau)(x) := \sigma ((\beta \circ \tau)(x)) \tau (x). $$ With this operation, the object inclusion $ 1 \colon M \to G $ becomes the neutral element and the inverse of a section $ \sigma $ is $ \sigma^{-1}(x) = i(\sigma((\beta \circ \sigma)^{-1}(x))). $
\end{definition}
\begin{example}
	\begin{enumerate}
		\item For a finite-dimensional manifold $ M $, the \emph{unit Lie groupoid} is the groupoid $ (M \rightrightarrows M) $ with both source and target projection $ \id_M $. The bisection group of this groupoid is trivial.
		\item Let $ M $ be a finite-dimensional smooth manifold. Then $ \mathcal{P}(M) := (M \times M \rightrightarrows M) $ with source projection $ \alpha = \pr_2 $ and target projection $ \beta = \pr_1 $ is a Lie groupoid. Multiplication in the groupoid is given by $ (x,y)(y,z) = (x,z) $. Postcomposition $ \beta_* $ induces an isomorphism $ \Bis (\mathcal{P}(M)) \cong \Diff(M) $ of groups, where \( \Diff (M) \) is the group of smooth diffeomorphisms of \( M \).
		\item Suppose $ G $ is a locally convex Lie group, and $ * $ is the one-point space. Then $ (G \rightrightarrows *) $ is a Lie groupoid with bisection group $ G $.
	\end{enumerate}
\end{example}

To prepare the construction of a topological group structure on bisection groups, recall the following facts on diffeomorphism groups of finite-dimensional manifolds.
\begin{remark}\label{rem: topgroup}
 Let $M$ be a finite-dimensional manifold and $\Diff (M)$ be the group of smooth diffeomorphisms of $M$.
 As $\Diff (M) \subseteq C^\infty (M,M)$, we can endow $\Diff (M)$ either with the subspace topology induced by the very strong topology (write $\Diff_{\text{vS}} (M)$) or with respect to the fine very strong topology (we write $\Diff_{\text{fS}} (M)$).
 Now as a consequence of \cite[Corollary 7.7]{michor} and Proposition \ref{prop: topologiescoincide}, both $\Diff_{\text{vS}} (M)$ and $\Diff_{\text{fS}} (M)$ are topological groups.
 Note that $\Diff_{\text{fS}} (M)$ is even a locally convex Lie group by \cite[Theorem 11.11]{michor}.
 In particular, we remark that the (subspace topology induced by the) fine very strong topology is the Lie group topology of $\Diff (M)$.
\end{remark}

\begin{proposition}\label{prop: topgp}
	If $ \Bis (\mathcal{G}) $ is equipped with the subspace topology with respect to $ \vsmooth (M,G) $ or $ \fsmooth (M,G) $, then $ \Bis (\mathcal{G}) $ becomes a topological group.
\end{proposition}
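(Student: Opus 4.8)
The plan is to exhibit the multiplication $\star$ and the inversion of $\Bis(\mathcal{G})$ as composites of maps already shown to be continuous in Sections \ref{sect: vsTop}--\ref{sect: compo}. Since $\Bis(\mathcal{G})$ carries the subspace topology from $\vsmooth(M,G)$ and, by the very definition of the bisection group, $\star$ and inversion take their values in $\Bis(\mathcal{G})$, it suffices to prove that
\begin{displaymath}
  \Bis(\mathcal{G})\times\Bis(\mathcal{G})\to\vsmooth(M,G),\ (\sigma,\tau)\mapsto\sigma\star\tau
  \quad\text{and}\quad
  \Bis(\mathcal{G})\to\vsmooth(M,G),\ \sigma\mapsto\sigma^{-1}
\end{displaymath}
are continuous. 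I argue for the very strong topology; by Proposition \ref{fsmoothprop} (which transfers Theorems \ref{compiscont} and \ref{product theorem}, and Proposition \ref{postcomposition}, to the fine very strong topology) and Remark \ref{rem: topgroup} (which also supplies the topological group $\Diff_{\textup{fS}}(M)$), the same argument works verbatim in the fine very strong case. Hausdorffness is automatic, as the very strong topology is finer than the compact open $C^\infty$-topology.

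Next I collect the auxiliary continuous maps. Postcomposition $\beta_*\colon\vsmooth(M,G)\to\vsmooth(M,M)$ is continuous by Proposition \ref{postcomposition}, and since $\beta\circ\sigma$ is a diffeomorphism for $\sigma\in\Bis(\mathcal{G})$ it restricts to a continuous map $\beta_*\colon\Bis(\mathcal{G})\to\Diff_{\textup{vS}}(M)$; the inversion $\Inv\colon\Diff_{\textup{vS}}(M)\to\Diff_{\textup{vS}}(M)$ is continuous by Remark \ref{rem: topgroup}; since every diffeomorphism of $M$ is proper we have a topological inclusion $\Diff_{\textup{vS}}(M)\subseteq\Prop_{\textup{vS}}(M,M)$, so Theorem \ref{compiscont} (with $N=M$ and $X=G$, after swapping the two factors) shows that
\begin{displaymath}
  \vartheta\colon\vsmooth(M,G)\times\Diff_{\textup{vS}}(M)\to\vsmooth(M,G),\qquad(\psi,\varphi)\mapsto\psi\circ\varphi
\end{displaymath}
is continuous; and postcomposition $i_*\colon\vsmooth(M,G)\to\vsmooth(M,G)$ is continuous, again by Proposition \ref{postcomposition}.

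Using $\sigma^{-1}=i\circ\sigma\circ(\beta\circ\sigma)^{-1}$, inversion in $\Bis(\mathcal{G})$ equals the composite $\sigma\mapsto i_*\bigl(\vartheta(\sigma,\Inv(\beta_*(\sigma)))\bigr)$ and is therefore continuous (here $\sigma$ is fed into the first slot of $\vartheta$ via the inclusion $\Bis(\mathcal{G})\hookrightarrow\vsmooth(M,G)$, and one uses continuity of diagonal maps and of the operations listed above). For the multiplication, note that for every $x\in M$
\begin{displaymath}
  \alpha\bigl(\sigma((\beta\circ\tau)(x))\bigr)=(\beta\circ\tau)(x)=\beta(\tau(x)),
\end{displaymath}
because $\sigma$ is a section of $\alpha$; hence the pointwise-defined map $g_{\sigma,\tau}\colon x\mapsto\bigl(\sigma((\beta\circ\tau)(x)),\tau(x)\bigr)$ is a smooth map $M\to G\times_{\alpha,\beta}G$, and $\sigma\star\tau=m\circ g_{\sigma,\tau}$. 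The assignment $(\sigma,\tau)\mapsto\bigl(\vartheta(\sigma,\beta_*(\tau)),\tau\bigr)=(\sigma\circ\beta\circ\tau,\tau)$ is continuous from $\Bis(\mathcal{G})\times\Bis(\mathcal{G})$ to $\vsmooth(M,G)\times\vsmooth(M,G)$ by the above; identifying $\vsmooth(M,G)\times\vsmooth(M,G)\cong\vsmooth(M,G\times G)$ via Theorem \ref{product theorem}, this is exactly the assignment $(\sigma,\tau)\mapsto g_{\sigma,\tau}$, and it has image in the subspace $\vsmooth(M,G\times_{\alpha,\beta}G)$; composing with the continuous map $m_*\colon\vsmooth(M,G\times_{\alpha,\beta}G)\to\vsmooth(M,G)$ (Proposition \ref{postcomposition}) yields the continuity of $(\sigma,\tau)\mapsto\sigma\star\tau$.

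The delicate point --- and the step I expect to be the main obstacle --- is the identification just used: that the set of smooth maps $M\to G\times G$ with values in $G\times_{\alpha,\beta}G$, equipped with the subspace topology from $\vsmooth(M,G\times G)$, is precisely $\vsmooth(M,G\times_{\alpha,\beta}G)$; equivalently, that postcomposition by the inclusion $G\times_{\alpha,\beta}G\hookrightarrow G\times G$ is a topological embedding. This is where the submersion hypothesis on $\alpha$ and $\beta$ enters: it makes $G\times_{\alpha,\beta}G$ an embedded submanifold of $G\times G$ whose charts may be taken as (the restrictions of) submanifold charts of $G\times G$, so that the elementary neighbourhoods of $\vsmooth(M,G\times_{\alpha,\beta}G)$ are exactly the traces of the elementary neighbourhoods of $\vsmooth(M,G\times G)$. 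Verifying this requires a short, but not entirely formal, computation from the definitions of Section \ref{sect: vsTop} (matching the controlling seminorms across the submanifold chart); everything else is formal once one has the composition results of Section \ref{sect: compo} and the topological group structure of $\Diff_{\textup{vS}}(M)$ and $\Diff_{\textup{fS}}(M)$.
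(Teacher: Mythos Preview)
Your proof is correct and takes essentially the same approach as the paper: both decompose $\star$ and inversion via the joint composition map $\Gamma$ of Theorem \ref{compiscont}, the product identification of Theorem \ref{product theorem}, postcomposition (Proposition \ref{postcomposition}), and continuity of $\Inv$ on $\Diff(M)$ from Remark \ref{rem: topgroup}. You are in fact more careful than the paper on one point: the paper's diagram writes $m_*$ as a map out of $\vsmooth(M,G\times G)$ without comment, whereas you correctly flag that $m$ is only defined on the fibre product and that one must check $\vsmooth(M,G\times_{\alpha,\beta}G)$ sits in $\vsmooth(M,G\times G)$ as a topological subspace---a step the paper leaves implicit but which your split-submanifold sketch handles.
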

\begin{proof}
	We will prove that $ \Bis (\mathcal{G}) $ becomes a topological group when equipped with the subspace topology with respect to $ \vsmooth (M,G) $. The case where we consider the subspace topology with respect to $ \fsmooth (M,G) $ can be proven identically, since we only use results that hold in both topologies.
	Let $ \Omega \colon \Bis (\mathcal{G}) \times \Bis (\mathcal{G}) \to \Bis (\mathcal{G}) $ be the multiplication map defined by $ \Omega (\sigma,\tau) = \sigma \star \tau $, and let $ \iota $ be the inclusion $ \Bis (\mathcal{G})  \to \vsmooth (M,G) $. Observe that we can write $$ \Omega(\sigma,\tau)(x) = \sigma((\beta \circ \tau)(x))\tau(x) = m(\Gamma(\beta \circ \tau, \sigma)(x),\tau(x)). $$ So $ \iota \circ \Omega $ can be written as a composition of continuous maps; the diagram
	\begin{align*}
		\xymatrix{
			\Bis (\mathcal{G}) \times \Bis (\mathcal{G}) \ar[d]^{(\beta_* \circ \pr_2, \iota \times \iota )} \ar@{.>}[rr]^{\iota \circ \Omega} &&\vsmooth (M,G)	\\
			\Prop_{\textup{vS}} (M,M) \times \vsmooth(M,G) \times \vsmooth (M,G) \ar[d]^{\Gamma \times \id}  &&  \\
			\vsmooth (M,G) \times \vsmooth (M,G) \ar[rr]^-{\cong} & &  \vsmooth (M,G \times G) \ar[uu]^{m_*}
			}
	\end{align*}
	commutes. Here we have used that $ \beta_* (\Bis (\mathcal{G})) \subseteq \Diff (M) \subseteq \Prop (M,M) $ by definition of bisections. All of the maps represented by normal arrows in the diagram are continuous by results in the previous sections. Since $ \iota \circ \Omega $ is continuous, so is $ \Omega $.
	
	Let $ \Phi \colon \Bis (\mathcal{G}) \to \Bis (\mathcal{G}) $ be the inversion map. 
	Inversion $ \Inv \colon \Diff_{\text{vS}} (M) \to \Diff_{\text{vS}}(M) $ is continuous by \cite[Theorem 7.6]{michor} and Proposition \ref{prop: topologiescoincide}. The diagram
	\begin{align*}
		\xymatrix{
			\Bis(\mathcal{G}) \ar[d]^{\beta_*} \ar@{.>}[rr]^{\iota \circ \Phi} && \vsmooth(M,G) \\
			\Diff_{\text{vS}}(M) \ar[r]^{\Inv} & \Diff_{\text{vS}}(M) \ar[r]^{\sigma_*} & \vsmooth (M,G) \ar[u]^{i_*}
			}
	\end{align*}
	commutes as $ \Phi(\sigma)(x) = i(\sigma((\beta \circ \sigma)^{-1}(x))) = (i_*(\sigma_*(\Inv(\beta_*(\sigma))))(x)$. 
	All maps represented by normal arrows are continuous. Thus $ \iota \circ \Phi $ and also $ \Phi $ are continuous.
\end{proof}

As noted in Remark \ref{rem: topgroup}, $\Diff (M)$ is a topological group with respect to the subspace topologies induced by the (fine) very strong topology on $C^\infty (M,M)$.
Thus we obtain the following morphisms of topological groups.
\begin{corollary}
	The target projection \( \beta \colon G \to M \) of a locally convex Lie groupoid \( \mathcal{G} = (G \rightrightarrows M) \) induces a map \( \beta_* \colon \Bis (\mathcal{G}) \to \Diff(M) \) given by postcomposition. This is a homomorphism of topological groups with respect to the very strong and fine very strong topologies on both groups.
\end{corollary}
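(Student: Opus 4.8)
The plan is to dispatch the three required properties in turn --- well-definedness of $\beta_*$ as a map into $\Diff(M)$, the homomorphism property, and continuity --- and then to invoke the already-established facts that the source and target groups are topological groups (Proposition \ref{prop: topgp} and Remark \ref{rem: topgroup}).

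Well-definedness is built into the definition of a bisection: for $\sigma \in \Bis(\mathcal{G})$ the composite $\beta\circ\sigma$ is by hypothesis a diffeomorphism of $M$, so $\beta_*(\sigma)\in\Diff(M)$. For the homomorphism property I would simply unwind the groupoid multiplication. Given $\sigma,\tau\in\Bis(\mathcal{G})$ and $x\in M$, write $g:=\sigma((\beta\circ\tau)(x))$ and $h:=\tau(x)$. Since $\sigma$ is a section of $\alpha$, one has $\alpha(g)=(\beta\circ\tau)(x)=\beta(h)$, so $g$ and $h$ are composable, and the groupoid axiom $\beta(gh)=\beta(g)$ yields
\[
 \beta_*(\sigma\star\tau)(x)=\beta(gh)=\beta(g)=(\beta\circ\sigma)\big((\beta\circ\tau)(x)\big)=\big(\beta_*(\sigma)\circ\beta_*(\tau)\big)(x).
\]
Hence $\beta_*(\sigma\star\tau)=\beta_*(\sigma)\circ\beta_*(\tau)$, i.e.\ $\beta_*$ carries the operation $\star$ to composition in $\Diff(M)$.

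For continuity, the observation is that $\beta_*\colon\Bis(\mathcal{G})\to\Diff(M)$ is nothing but a restriction-corestriction of the postcomposition operator $\beta_*\colon\vsmooth(M,G)\to\vsmooth(M,M)$, which is continuous by Proposition \ref{postcomposition}. Since $\Bis(\mathcal{G})$ and $\Diff_{\textup{vS}}(M)$ both carry subspace topologies, and the image of $\Bis(\mathcal{G})$ lands in $\Diff(M)$ by the first step, the corestriction is again continuous; the fine very strong case is identical, using Proposition \ref{fsmoothprop} in place of Proposition \ref{postcomposition}. Combined with Remark \ref{rem: topgroup} and Proposition \ref{prop: topgp}, this exhibits $\beta_*$ as a continuous homomorphism between topological groups, which is all that is claimed.

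I do not expect a serious obstacle: the homomorphism computation is a short bookkeeping exercise with the source and target maps (and with the convention that the group law on $\Diff(M)$ is $(f,g)\mapsto f\circ g$), while continuity is immediate from the already-proven continuity of postcomposition together with the elementary behaviour of subspace topologies. The only mild subtlety is to keep the $\star$-versus-$\circ$ ordering consistent with the conventions fixed earlier in the paper.
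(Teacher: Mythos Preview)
Your proposal is correct and follows essentially the same route as the paper: continuity via restriction/corestriction of the postcomposition map $\beta_*$ (Proposition \ref{postcomposition}, respectively Proposition \ref{fsmoothprop}), and the homomorphism property via the groupoid identity $\beta(gh)=\beta(g)$. The paper's version is simply terser and omits the well-definedness step you spell out.
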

\begin{proof}
	Since $\beta_* \colon \vsmooth (M,G) \rightarrow \vsmooth (M,M)$ is continuous, so is the (co)restriction of \( \beta_* \) to $\Bis (\mathcal{G})$ and \( \Diff (M) \). The same argument holds in the fine very strong topology.
	The map \( \beta_* \) is also a group homomorphism, since 
	\[ \left( \beta_* ( \sigma \star \tau ) \right) (x) = \beta \left( \sigma ((\beta \circ \tau)(x)) \tau (x) \right) = \beta ( \sigma ( \beta \circ \tau ) (x)) = \left( \beta_*(\sigma) \circ \beta_* (\tau)\right) (x).\qedhere \]
\end{proof}

The results of this section enable the construction of a Lie group structure on $\Bis (\mathcal{G})$. 
It is worth noting that the key step in constructing the Lie group structure is sorting out the topology of the function spaces.
Using the manifold structure on $\fsmooth (M,G)$ (see comments in Section \ref{sect: smmfd}) one establishes the smoothness of joint composition and postcomposition with respect to these structures.
Since Theorem A and the $\Omega$-Lemma \cite[Appendix F]{glockomega} are at our disposal, one can copy exactly the arguments from the finite-dimensional case outlined in \cite[\S 10 and \S 11]{michor}.
After that one can proceed as in \cite{Schmeding2015} and establish smoothness of the group operations following the proof of Proposition \ref{prop: topgp}.
Again, results of this type are beyond the scope of the present paper. 

\newpage
\appendix
\section{Calculus in the locally convex setting}
\label{calculus}
In this appendix we lay down the definitions, notation and conventions used throughout this article regarding locally convex vector spaces, infinite-dimensional manifolds, and smooth maps between such objects.

\textbf{Locally convex vector spaces.}
\begin{definition}
	\begin{enumerate}
		\item A real vector space $ E $ is a \emph{topological vector space} if $ E $ is equipped with a Hausdorff topology turning both addition and scalar multiplication into continuous maps.
		\item A topological vector space $ E $ is called \emph{locally convex} if every 0-neighborhood contains a convex 0-neighborhood.
	\end{enumerate}
\end{definition}
Particularly useful for our purposes (constructing the very strong topology on a space of smooth functions into a locally convex vector space) is the perspective of a locally convex vector topology as generated by a suitable family of seminorms on the space.
\begin{definition}
\label{seminormfamilydef}
Let $ E $ be a locally convex vector space.
\begin{enumerate}
	\item A seminorm on $ E $ is a map $ p \colon E \to [0,\infty) $ such that $ p(x+y) \leq p(x) + p(y) $ and $ p(\lambda x) = |\lambda | p(x) $ for all $ x,y \in E $ and all $ \lambda \in \mathbb{R} $.
	\item A family $ \mathcal{P} $ of continuous seminorms on $ E $ is called \emph{generating} if $ E $ has the initial topology with respect to $ \mathcal{P} $.
	\item For a seminorm $ p $ on a vector space $ E $, $ x \in E $ and $ \epsilon > 0 $, we write $ B_\epsilon^p (x) := \lbrace y \in E : p(y-x)<\epsilon \rbrace $ and $ \overline{B}_\epsilon^p(x) := \lbrace y \in E : p(y-x) \leq \epsilon \rbrace $, called \emph{open} and \emph{closed} seminorm balls, respectively.
\end{enumerate}
\end{definition}
We list some properties of seminorms and families of such that are particularly useful for us.
\begin{proposition}[\S 18 in \cite{koethe}]
	\label{locally convex prop}
	Let $ E $ be a locally convex vector space.
	\begin{enumerate}
		\item\label{cont seminorms are generating} The family of all continuous seminorms on $ E $ is generating.
		\item\label{generating family criterion} A family $ \mathcal{P} $ of continuous seminorms on $ E $ is generating if and only if for any continuous seminorm $ p $ on $ E $ there exist $ p_1,\dots,p_n \in \mathcal{P} $ and a $ c \in \mathbb{R} $ such that $$ p \leq c \sup_{1\leq i \leq n} p_i. $$
		\item\label{locally convex product} Let $ \lbrace E_i \rbrace_{i\in I} $ be a family of locally convex vector spaces. The product $ \prod_{i \in I} E_i $ of vector spaces is a locally convex vector space when equipped with the product topology. Moreover, the set $ \bigcup_{i \in I} \lbrace p \circ \pr_i : \mbox{$p$ is a continuous seminorm on $E_i$} \rbrace $ is a generating family of seminorms for $ \prod_{i \in I} E_i $.
	\end{enumerate}
\end{proposition}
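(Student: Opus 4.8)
The statement is classical (it is essentially \S 18 of \cite{koethe}); here is the route I would take. The whole argument rests on two standard facts about a topological vector space $E$: that $E$ is locally convex precisely when it admits a $0$-neighborhood basis of convex, balanced, absorbing sets, and that for such a set $U$ the Minkowski functional $p_U(x) := \inf\{t > 0 : x \in tU\}$ is a continuous seminorm with $\{p_U < 1\} \subseteq U \subseteq \{p_U \le 1\}$. For part (1), write $\mathcal{T}$ for the topology of $E$ and $\mathcal{T}_0$ for the initial topology with respect to the family of all continuous seminorms. Every continuous seminorm is $\mathcal{T}$-continuous, so $\mathcal{T}_0 \subseteq \mathcal{T}$ is automatic. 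Conversely, given a $\mathcal{T}$-open $O \ni 0$ (a general point is handled by translation), local convexity yields a convex balanced absorbing $U \subseteq O$, and then $\{p_U < 1\}$ is a $\mathcal{T}_0$-open neighborhood of $0$ contained in $O$; hence $\mathcal{T} \subseteq \mathcal{T}_0$. The only genuinely non-bookkeeping point here is verifying that $p_U$ is a continuous seminorm: subadditivity and absolute homogeneity come from convexity and balancedness of $U$, the inclusion $\{p_U<1\}\subseteq U$ from balancedness, and continuity from $\tfrac{t}{2}U \subseteq \{p_U < t\}$ together with the reverse triangle inequality $|p_U(y) - p_U(z)| \le p_U(y - z)$.

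For part (2), the direction $(\Leftarrow)$ is formal: by (1), and since finite maxima of continuous seminorms are again continuous seminorms, the sets $\{p < \varepsilon\}$ with $p$ a continuous seminorm form a $0$-neighborhood basis for $\mathcal{T}$; if $p \le c\,\sup_{i \le n} p_i$ with $p_i \in \mathcal{P}$, then the $\mathcal{P}$-initial neighborhood $\bigcap_{i\le n}\{p_i < \varepsilon/c\}$ lies inside $\{p < \varepsilon\}$, so $\mathcal{T}$ is coarser than the $\mathcal{P}$-initial topology, while the reverse inclusion holds because every $p \in \mathcal{P}$ is $\mathcal{T}$-continuous. For $(\Rightarrow)$, given a continuous seminorm $p$, the $0$-neighborhood $\{p < 1\}$ contains a basic $\mathcal{P}$-initial one, i.e.\ $\bigcap_{i\le n}\{p_i < \delta\} = \{q < \delta\} \subseteq \{p < 1\}$ where $q := \sup_{i \le n} p_i \in$ (continuous seminorms). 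The point is to upgrade this inclusion to the pointwise estimate $p \le \delta^{-1} q$: for $x$ with $q(x) = 0$ one applies the inclusion to $\lambda x$ for all $\lambda > 0$ to force $p(x) = 0$, and for $q(x) > 0$ one applies it to $\tfrac{\delta - \eta}{q(x)}\,x$ and lets $\eta \downarrow 0$. Then $c = \delta^{-1}$ works. This homogeneity/scaling step is the one I expect to be the main (if mild) obstacle.

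For part (3), the product topology on $\prod_{i \in I} E_i$ is by definition the initial topology with respect to the projections $\pr_i$; by (1) each $E_i$ carries the initial topology with respect to its continuous seminorms, so transitivity of initial topologies gives that $\prod_{i} E_i$ carries the initial topology with respect to all maps $p \circ \pr_i$, which are seminorms on $\prod_i E_i$ (a seminorm composed with a linear map). A topology generated by a family of seminorms is automatically a locally convex vector topology: finite intersections of the associated seminorm balls form a basis of convex balanced $0$-neighborhoods, Hausdorffness follows since each $E_i$ is Hausdorff so the $p \circ \pr_i$ separate points, and continuity of addition and scalar multiplication is checked factor by factor via the universal property of the product. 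The displayed family is generating by this very construction. Apart from the gauge computation in (1) and the scaling argument in $(\Rightarrow)$ of (2), everything is routine manipulation of initial topologies.
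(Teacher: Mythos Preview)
Your proof is correct and follows the standard textbook route via Minkowski functionals and the scaling argument. Note, however, that the paper does not supply its own proof of this proposition: it is stated with a citation to \S 18 of K\"othe and used as a black box, so there is no in-paper argument to compare against. Your write-up is exactly the kind of proof one finds in the cited reference.
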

\textbf{Smooth maps between locally convex vector spaces.}
Differential calculus in the locally convex setting (in our sense) is sometimes known as \emph{Bastiani calculus} (see \cite{bastiani}) or \emph{Keller $C^r$-theory} (see \cite{keller}). 
The idea is to define the derivative of a map between locally convex vector spaces using directional derivatives.
\begin{definition}
	\label{Crmap}
	Let $ E $ and $ F $ be locally convex vector spaces, $ U \subseteq E $ open, and $ f \colon U \to F $ a map. 
	If it exists, we define for $ (x,h) \in U \times E $ \emph{the directional derivative}
	$$ \dd f(x,h) := D_h f(x) := \lim_{t\to 0} \frac{f(x+th)-f(x)}{t}. $$
	For $ r \in \mathbb{N}\cup \lbrace \infty \rbrace $ we say that $ f $ is $ C^r $ if
	$$ \dd^{(k)}f(x;y_1,\dots,y_k) := D_{y_k}D_{y_{k-1}}\cdots D{y_1} f(x) $$
	exists for all $ k $ with $ 1 \leq k \leq r $ and all $ y_i \in E $, the $ \dd^{(k)} f \colon U \times E^k \to F $ define continuous maps, and $ f $ is continuous. We say $f$ is \emph{smooth} if it is $C^\infty$.
	
	Elsewhere, $ \dd^{(k)} f (x;y_1,\dots,y_n) $ is often denoted
	\begin{align*}
		\frac{\partial^k}{\partial y_k \cdots \partial y_1}f(x) && \mbox{or} && \frac{\partial}{\partial y} f(x), \ \text{ where } y = (y_1,\dots,y_k) .
	\end{align*}
\end{definition}
Here is an alternative, but equivalent condition for when a map is $ C^r $ for $ r \geq 2 $.
\begin{lemma}[Lemma 1.14 in \cite{glock1}]
	Suppose $ f $ is $ C^1 $ in the sense of Definition \ref{Crmap}. Then $ f $ is a $ C^r $-map if and only if $ \dd f \colon U \times E \to F $ is $ C^{r-1} $. In this case, we define $ \dd^k f $ for $ k \leq r $ recursively by
	$$ \dd^k f := \dd^{k-1} (\dd f) \colon U \times E^{2^k-1} \to F .$$
\end{lemma}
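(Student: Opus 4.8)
The plan is to prove both implications simultaneously by induction on $r\geq 2$, with essentially all of the content sitting in a single first-order computation on the product $U\times E$ that already handles the base case $r=2$; the inductive step is then pure bookkeeping.

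First I would record the formula for the first-order directional derivative of $\dd f$. Fix $f$ of class $C^1$ in the sense of Definition~\ref{Crmap}. Then $\dd f(x,\cdot)\colon E\to F$ is $\RR$-linear (a standard fact of Bastiani calculus, see \cite{glock1}) and $\dd f\colon U\times E\to F$ is continuous by definition of $C^1$. For $(x,h)\in U\times E$ and a direction $(y,k)\in E\times E$, inserting $\mp\,\dd f(x+ty,h)$ into the difference quotient and using linearity in the second slot gives
\[
 \frac{\dd f(x+ty,h+tk)-\dd f(x,h)}{t}=\frac{\dd f(x+ty,h)-\dd f(x,h)}{t}+\dd f(x+ty,k).
\]
Letting $t\to 0$, the left-hand limit exists for all $(x,h,y,k)$ if and only if $\dd^{(2)}f$ exists everywhere (take $k=0$ for the nontrivial direction), and in that case
\[
 D_{(y,k)}(\dd f)(x,h)=\dd^{(2)}f(x;h,y)+\dd f(x,k),
\]
which is jointly continuous in $(x,h,y,k)$ exactly when $\dd^{(2)}f$ is continuous. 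This is precisely the claim for $r=2$: $\dd f$ is $C^1$ (i.e.\ $\dd(\dd f)$ exists and is continuous, the continuity of $\dd f$ itself being already known) if and only if $\dd^{(2)}f$ exists and is continuous, i.e.\ $f$ is $C^2$; and the second display is the recursion $\dd^2 f=\dd^1(\dd f)$ on $U\times E^{3}$.

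For the inductive step ($r\geq 3$) I would use the tautology that $f$ is $C^r$ iff ($f$ is $C^{r-1}$ and $\dd^{(r)}f$ exists and is continuous), and likewise $\dd f$ is $C^{r-1}$ iff ($\dd f$ is $C^{r-2}$ and $\dd^{(r-1)}(\dd f)$ exists and is continuous). By the already-established case $r-1$ the conditions ``$f$ is $C^{r-1}$'' and ``$\dd f$ is $C^{r-2}$'' are equivalent, so it remains only to match the two top-order conditions. In one direction, precomposing $\dd^{(r-1)}(\dd f)$ with the continuous linear map $(x,y_1,\dots,y_r)\mapsto\bigl((x,y_1),(y_2,0),\dots,(y_r,0)\bigr)$ into $(U\times E)\times(E\times E)^{r-1}$ recovers $\dd^{(r)}f$ on $U\times E^{r}$ and preserves existence-everywhere and continuity; in the other direction, iterating the base-case identity shows that each further directional derivative of an iterated directional derivative of $\dd f$ is, by linearity of $\dd^{(l)}f$ in its direction slots, a finite sum of terms $\dd^{(l)}f$ evaluated at arguments affine in the variables with $l\le r$, each of which is continuous because $\dd^{(l)}f$ is continuous for $l\le r$. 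The same inductive step yields the recursion $\dd^{r}f=\dd^{r-1}(\dd f)\colon U\times E^{2^{r}-1}\to F$ by counting arguments: differentiating a map with domain $U\times E^{n}$ in one direction of that product produces a domain $U\times E^{2n+1}$, and $n\mapsto 2n+1$ starting from $n=0$ for $f$ gives $2^{k}-1$ after $k$ steps.

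The main obstacle will be organizational rather than analytic: one has to keep precise track of the iterated directional derivatives of $\dd f$ on the rapidly growing products $U\times E^{2^k-1}$ and verify that restricting their direction arguments to the subspaces $E\times\{0\}$ returns exactly $\dd^{(k)}f$ together with the correct continuity statement. The genuinely analytic inputs — $\RR$-linearity and continuity of $\dd f(x,\cdot)$ for $C^1$ maps, stability of finite iterated directional derivatives under precomposition with continuous linear maps, and the interchange of limit with finite sums in the displayed identity — are all standard, so once the base case is in place the induction is routine.
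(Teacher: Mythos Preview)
The paper does not actually prove this lemma; it is stated with a bare citation to Lemma~1.14 in \cite{glock1}, so there is no in-paper argument to compare against. Your proof is correct and is essentially the standard one: the content lives entirely in the first-order computation $D_{(y,k)}(\dd f)(x,h)=\dd^{(2)}f(x;h,y)+\dd f(x,k)$, which you derive correctly using linearity of $\dd f(x,\cdot)$ and continuity of $\dd f$, and the induction is routine. The only place that could be written more explicitly is the ``other direction'' of the inductive step: rather than saying loosely that iterated directional derivatives of $\dd f$ are finite sums of $\dd^{(l)}f$-terms, one should maintain inductively the explicit formula for $\dd^{(j)}(\dd f)$ as such a sum (your base-case display being $j=1$), so that term-by-term differentiation is visibly justified once $\dd^{(l)}f$ exists for $l\le j+2$. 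But this is precisely what you sketch, and no genuine gap remains.
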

By convention $ \dd^0 f = \dd^{(0)} f = f $.
Note that whereas $ \dd^{(k)} f $ is obtained by taking iterated directional derivatives of $ f $ with respect to the first variable, $ \dd^k f $ is the derivative of $ \dd^{k-1} f $ with respect to all variables. 
The $ \dd^k f $ and $ \dd^{(k)} f $ determine each other:
\begin{proposition}[Proof of Lemma 1.14 in \cite{glock1}]
	\label{longformula}
	Let $ k \in \mathbb{N} $. Then 
	\begin{enumerate}
		\item For each $ j \in \lbrace 1,\dots,k \rbrace $ and each integer $j$-tuple $\alpha $ with $ 1 \leq \alpha_1 < \cdots \alpha_j \leq 2^k - 1 $ there exists $ n_\alpha \in \mathbb{N}_0 $ such that for every $ C^k $-map $ f \colon E \supseteq U \to F $, we have
			$$ \dd^k f (x;y_1,\dots,y_{2^k-1}) = \sum_{j=1}^k \sum_{\alpha_1 < \cdots < \alpha_j} n_\alpha \dd^{(j)} f (x;y_{\alpha_1},\dots,y_{\alpha_j}) $$
		for all $ x \in U $ and all $ y_1,\dots,y_{2^k-1} \in E $.
	\item There exist numbers $ \alpha_1 < \cdots < \alpha_k $ in $ \lbrace 1, \dots , 2^k-1 \rbrace $ such that for every $ C^k $-map $ f\colon E \supseteq U \to F $, we have
		$$ \dd^{(k)} f (x;y_1,\dots,y_k) = \dd^k f (x;\iota_{\alpha} (y_1,\dots,y_k)) $$
		for all $ x \in U $ and all $ y_1,\dots,y_k \in E^k $. Here, $ \iota_\alpha \colon E^k \to E^{2^k-1} $ is the inclusion sending $ y_i $ to position $ \alpha_i $, i.e. $ \pr_j \iota_\alpha (y_1,\dots,y_k) = y_i $ if $ j = \alpha_i $ for some $ i $ and 0 otherwise.
	\end{enumerate}
\end{proposition}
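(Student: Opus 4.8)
The plan is to prove both formulas simultaneously by induction on $k$, using the recursive description $\dd^k f = \dd(\dd^{k-1}f)$ together with the elementary calculus facts that each $\dd^{(j)}f(x;-,\dots,-)$ is linear in every directional slot, that $\dd\bigl(\dd^{(j)}f(\cdot;y_1,\dots,y_j)\bigr)(x;h)=\dd^{(j+1)}f(x;y_1,\dots,y_j,h)$ by definition of the iterated directional derivative, and that iterated derivatives are symmetric in their directional arguments (equation \eqref{schwarzrule} in Proposition \ref{chainruleprop}). The base case $k=1$ is immediate for both assertions: $\dd^1 f=\dd f=\dd^{(1)}f$, and $\iota_{\alpha}$ for $\alpha=(1)\in\{1\}$ is the identity on $E^1$.

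For the inductive step of (1), regard $\dd^{k-1}f$ as a map on the open subset $U\times E^{2^{k-1}-1}$ of $E^{2^{k-1}}$, and apply $\dd$ to the identity $\dd^{k-1}f(x;z_1,\dots,z_{2^{k-1}-1})=\sum_{j=1}^{k-1}\sum_{\beta}n_{\beta}\,\dd^{(j)}f(x;z_{\beta_1},\dots,z_{\beta_j})$ furnished by the induction hypothesis (with $n_{\beta}\in\mathbb{N}_0$). Differentiating with respect to all $2^{k-1}$ arguments and using linearity in the $z$-slots and the definition of $\dd^{(j+1)}f$ for the base-point slot, each summand $n_{\beta}\dd^{(j)}f(x;z_{\beta_1},\dots,z_{\beta_j})$ turns into $n_{\beta}$ times a sum of one $\dd^{(j+1)}f$-term and $j$ further $\dd^{(j)}f$-terms, each evaluated on a subtuple of the $2^k-1$ available directions. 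Reindexing the $2^{k-1}-1$ ``old'' slots, the single base-point-direction slot and the $2^{k-1}-1$ ``new'' slots as positions $1,\dots,2^k-1$, and re-sorting each directional subtuple into strictly increasing order by symmetry, one collects everything into the stated shape; the coefficients are sums of the $n_{\beta}$ (the empty sum allowed), hence lie in $\mathbb{N}_0$ and do not depend on $f$, and since $\dd^{k-1}f$ involves orders $1\le j\le k-1$, differentiation yields orders $1\le j\le k$.

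For the inductive step of (2), the induction hypothesis provides $\alpha'_1<\cdots<\alpha'_{k-1}$ in $\{1,\dots,2^{k-1}-1\}$ with $\dd^{(k-1)}f(x;y_1,\dots,y_{k-1})=\dd^{k-1}f\bigl(x;\iota_{\alpha'}(y_1,\dots,y_{k-1})\bigr)$. By definition $\dd^{(k)}f(x;y_1,\dots,y_k)$ is the directional derivative at $x$ in direction $y_k$ of the map $x'\mapsto\dd^{k-1}f\bigl(x';\iota_{\alpha'}(y_1,\dots,y_{k-1})\bigr)$, i.e.\ the partial derivative of $\dd^{k-1}f$ in its base-point argument with the other arguments frozen; hence it equals $\dd^k f$ evaluated at $x$ with directions $\iota_{\alpha'}(y_1,\dots,y_{k-1})$ in the $2^{k-1}-1$ old slots, $y_k$ in the base-point-direction slot, and $0$ in the remaining $2^{k-1}-1$ slots. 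As the base-point-direction slot occupies position $2^{k-1}>\alpha'_j$ for all $j$, setting $\alpha_j:=\alpha'_j$ for $j<k$ and $\alpha_k:=2^{k-1}$ gives a strictly increasing tuple with $\dd^{(k)}f(x;y_1,\dots,y_k)=\dd^k f\bigl(x;\iota_{\alpha}(y_1,\dots,y_k)\bigr)$ (explicitly $\alpha=(1,2,4,\dots,2^{k-1})$).

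The only genuine difficulty is bookkeeping: one must track, through the induction, how the $2^k-1$ directional slots of $\dd^k f$ split into the ``old'' arguments of $\dd^{k-1}f$, the direction along which the base point is differentiated, and the directions along which those old arguments are differentiated; once this correspondence is pinned down, the rest is the product rule, linearity, and symmetry. Alternatively, one may simply refer to the proof of Lemma 1.14 in \cite{glock1}, where precisely this computation is carried out.
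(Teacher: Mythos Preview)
Your proof is correct, and there is nothing to compare against: the paper does not supply its own proof of this proposition but simply records it as being contained in the proof of Lemma~1.14 in \cite{glock1}. Your inductive argument---differentiating the formula for $\dd^{k-1}f$ and using linearity in the directional slots, the defining relation $\dd\bigl(\dd^{(j)}f(\cdot;y_1,\dots,y_j)\bigr)(x;h)=\dd^{(j+1)}f(x;y_1,\dots,y_j,h)$, and symmetry to reorder indices---is exactly the computation carried out in that reference, as you yourself note at the end.
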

\begin{proposition}[1.12, 1.13, and 1.15 in \cite{glock1}]
	\label{chainruleprop}
	Let $ E, F $ and $ G $ be locally convex vector spaces, $ U \subseteq E $ and $ V \subseteq F $ open, $ f \colon U \to F $ and $ g \colon V \to G $ maps such that $ f(U) \subseteq V $, and $ r \in \mathbb{N} \cup \lbrace \infty \rbrace $.
	\begin{enumerate}
		\item\label{prechainrule} If $ f $ and $ g $ are $ C^r $-maps, then $ g \circ f \colon U \to G $ is $ C^r $.
		\item\label{chainrule} If $ f $ and $ g $ are $C^1 $-maps, then $ \dd(g \circ f)(x,y) = \dd g(f(x),\dd f(x,y)) $ for all $ x \in U $ and all $ y \in E $.
		\item\label{schwarzrule} If $ f $ is $ C^r $ and $ x \in U $, then $ \dd^{(k)} f (x;\cdot) \colon E^k \to F $ is a symmetric $k$-linear map for every $ k \leq r $.
	\end{enumerate}
\end{proposition}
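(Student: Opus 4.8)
These are the three cornerstones of Bastiani calculus, and the plan is to prove parts \ref{prechainrule} and \ref{chainrule} together by induction on $r$, and part \ref{schwarzrule} separately. The single tool used throughout is the locally convex fundamental theorem of calculus: for a $C^1$-map $\varphi\colon E\supseteq U\to F$ and $x,h$ with $x+th\in U$ for all $t\in[0,1]$,
\begin{displaymath}
	\varphi(x+h)-\varphi(x)=\int_0^1\dd\varphi(x+th;h)\,\dd t ,
\end{displaymath}
a weak Riemann integral of the continuous curve $t\mapsto\dd\varphi(x+th;h)$ in $F$, which depends continuously on $(x,h)$. I would establish (or cite) the existence and parameter-continuity of these integrals beforehand and treat it as the one genuinely analytic input here.

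For the base case $r=1$ (which simultaneously yields part \ref{chainrule}), fix $x\in U$, $h\in E$ and small $t\neq 0$ and apply the increment formula to $g$ on the segment from $f(x)$ to $f(x+th)$, and to $f$ on the segment from $x$ to $x+th$:
\begin{align*}
	g(f(x+th))-g(f(x)) &= \int_0^1\dd g\bigl(f(x)+s(f(x+th)-f(x));\,f(x+th)-f(x)\bigr)\,\dd s ,\\
	f(x+th)-f(x) &= t\int_0^1\dd f(x+\sigma th;h)\,\dd\sigma .
\end{align*}
Substituting the second line into the first, pulling the scalar $t$ out of the (linear) second slot of $\dd g$, dividing by $t$ and letting $t\to 0$, continuity of $\dd g$, $\dd f$, $f$ and of the integrals in their parameters gives $\dd(g\circ f)(x;h)=\dd g(f(x);\dd f(x;h))$. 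Since the right-hand side is a continuous function of $(x,h)$ and $g\circ f$ is continuous as a composite, $g\circ f\in C^1$.

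For the induction step, write $\Phi(x,h):=(f(x),\dd f(x;h))$, so the identity just proved reads $\dd(g\circ f)=\dd g\circ\Phi$. If $f,g\in C^r$ with $r\geq 2$, then $f$ and $\dd f$ are $C^{r-1}$, hence $\Phi$ is $C^{r-1}$, and $\dd g$ is $C^{r-1}$; by the inductive hypothesis the composite $\dd g\circ\Phi$ is $C^{r-1}$, so by the recursive characterization ``$\varphi\in C^r\Leftrightarrow\dd\varphi\in C^{r-1}$'' recalled above, $g\circ f\in C^r$; the case $r=\infty$ follows since $C^\infty=\bigcap_{r\in\mathbb{N}}C^r$. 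For part \ref{schwarzrule}: homogeneity of $\dd f(x;\cdot)$ is immediate and additivity follows from the increment formula, and linearity in the remaining slots follows by induction on $k$ applied to the maps $\dd^{(j)}f$. The core of the symmetry statement is $k=2$: for a $C^2$-map $f$ and directions $y_1,y_2$ I would pass to the $C^2$-map of two real variables $c(s,t):=f(x+sy_1+ty_2)$, under which the two orderings of $\dd^{(2)}f(x;-,-)$ become $\partial_t\partial_s c(0,0)$ and $\partial_s\partial_t c(0,0)$; equality of mixed partials of a $C^2$-map $\RR^2\to F$ reduces to the scalar case by post-composing with the continuous linear functionals, which separate the points of the Hausdorff space $F$ by Hahn--Banach, and there it is classical. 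For general $k$, every permutation of $(y_1,\dots,y_k)$ is a product of adjacent transpositions, and swapping $y_i,y_{i+1}$ in $\dd^{(k)}f(x;y_1,\dots,y_k)=D_{y_k}\cdots D_{y_1}f(x)$ is precisely the $k=2$ statement applied to $D_{y_{i-1}}\cdots D_{y_1}f$ after the outer derivatives are taken.

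The step I expect to be the main obstacle is part \ref{schwarzrule} for $k\geq 2$: one must keep careful track of which intermediate maps $D_{y_{i-1}}\cdots D_{y_1}f$ are still $C^2$ in the directions not yet used (this is exactly where the recursive $C^r$-characterization and Proposition \ref{longformula} do the work), and one must reduce the planar Schwarz lemma to scalar functions. The standing technical point, which I would rather settle before this proof than inside it, is the existence in $F$ and continuous parameter-dependence of the weak Riemann integrals used throughout.
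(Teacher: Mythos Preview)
The paper does not actually prove this proposition: it is stated in Appendix~\ref{calculus} with an explicit citation to~\cite[1.12, 1.13, 1.15]{glock1} and no proof is given. So there is nothing in the paper to compare against.

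That said, your sketch is the standard route and is essentially what one finds in Gl\"ockner's cited source: the weak Riemann integral supplies the increment formula, the $C^1$ chain rule follows by the two-parameter integral argument you wrote, the induction step is exactly the observation $\dd(g\circ f)=\dd g\circ(f,\dd f)$, and Schwarz symmetry is reduced to the planar case via Hahn--Banach. Two small points worth tightening: first, when you apply the increment formula to $g$ on the segment from $f(x)$ to $f(x+th)$, you should remark that this segment lies in $V$ for small $t$ (by continuity of $f$ and openness of $V$); second, once you have symmetry, multilinearity is cheaper than your inductive argument suggests---linearity in the \emph{last} slot $y_k$ is just the $k=1$ statement applied to $\dd^{(k-1)}f(\,\cdot\,;y_1,\dots,y_{k-1})$, and symmetry then propagates linearity to every slot. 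Neither point is a gap, just polish.
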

The chain rule \eqref{chainrule} is difficult to work with directly for higher order derivatives. But there is a way to circumvent this problem, by defining a map $ \T^j f $ in such a way that we have the identity $ \dd^j (g \circ f) = \dd^j g \circ \T^j f. $
\begin{definition}
	\label{Tdef}
	If $ f \colon E \supseteq U \to F $ is $ C^k $, we define
	\begin{align*}
		\T f \colon U \times E \to F \times F, \quad 		(x,y) \mapsto (f(x),\dd f(x,y))
	\end{align*}
	Note that $ \T f $ is $ C^{k-1} $. For $ j \leq k $ define $ \T^j f := \T (\T^{j-1}f) \colon U \times E^{2^j -1} \to F^{2^j}. $ We also define $ \tilde{\T}^j f $ as the projection of $ \T^j f $ onto the last $ 2^j - 1 $ coordinates, so that $$ \T^j f (x;y_1,\dots , y_{2^j - 1}) = (f(x),\tilde{\T}^j f (x;y_1,\dots,y_{2^j -1}) . $$
\end{definition}
\textbf{Manifolds modeled on locally convex vector spaces.}
\begin{definition}
	A $ C^0 $-manifold modeled on a locally convex vector space $ E $ is a Hausdorff topological space $ X $ which is locally homeomorphic to $ E $.
	
	By \emph{locally homeomorphic} to $ E $ it is meant that any point $ x \in X $ has an open neighborhood that is homeomorphic to an open set in $ E $.
\end{definition} 
\begin{remark}
	As opposed to the standard definition of finite-dimensional manifolds, we don't require that manifolds are $ \sigma $-compact. However, we always assume that finite-dimensional manifolds are $ \sigma $-compact. For locally compact and locally metrizable Hausdorff spaces (which includes finite-dimensional manifolds), being \( \sigma \)-compact is equivalent to being second countable.
\end{remark}
Just like in the finite-dimensional case, a $ C^r $ structure on an infinite-dimensional manifold $ X $ is given by a choice of a \emph{$C^r$-atlas}, or more precisely an equivalence class of such.
\begin{definition}
	Let $ X $ be a $ C^0 $-manifolds modeled on $ E $. A \emph{$ C^r $-atlas} for $ X $ is a collection $$ \mathcal{A} = \lbrace (\phi,U) : \mbox{$ U \subseteq X $ open, $ U' \subseteq E  $ open, and $ \phi \colon U \to U' $ a homeomorphism} \rbrace $$ such that for any two elements $ (\phi,U),(\psi,V) \in \mathcal{A} $ (called \emph{charts}) for which $ U \cap V \neq \emptyset $, the \emph{transition map} $ \psi \circ \phi_{|\phi(U\cap V)}^{-1} \colon \phi (U \cap V) \to V' $ is a $ C^r $-map.
\end{definition}
We define an equivalence class on the set of $ C^r $-atlases of $ X $ by declaring that $ \mathcal{A} \sim \mathcal{A}' $ if $ \mathcal{A} \cup \mathcal{A}' $ is a $ C^r $-atlas for $ X $. Using \eqref{prechainrule} and \eqref{chainrule} in Proposition \ref{chainruleprop} one may check that this is indeed an equivalence relation. Given a specified equivalence class $ [\mathcal{A}] $ of $ C^r $-atlases for $ X $, a chart in $ [\mathcal{A}] $ (or simply a chart for $ X $) is a chart in $ \mathcal{A}' $ for some $ \mathcal{A}' \in [\mathcal{A}] $.
\begin{definition}
	A $ C^r $-manifold $ X $ modeled on a locally convex vector space $ E $ (or just a locally convex $ C^r $ manifold) is a $ C^0 $-manifold modeled on $ E $ together with a chosen equivalence class of $ C^r $-atlases for $ X $.
\end{definition}

\begin{definition}[ \( C^r \)-map]
	Let $ f \colon X \to Y $ be a map. Then $ f $ is a \emph{$C^r$-map} if for all $ x \in X $ and all pairs of charts $ (U,\phi) $ and $ (V,\psi) $ on $ X $ and $ Y $, respectively, such that $ x \in U $ and $ f(x) \in V $, the composition $ \psi \circ f \circ \phi_{|\phi(f^{-1}(V))}^{-1} \colon \phi(f^{-1}(V)) \to \psi(V) $ is a $ C^r $-map.
	Denote the set of all $ C^r $-maps $ X \to Y $ by $ C^r (X,Y) $.
\end{definition}

Products of manifolds, tangent spaces etc.\ can be defined for manifolds modeled on locally convex spaces as in the finite-dimensional setting. 
More information on this and on locally convex Lie groups (see below) can be found in \cite{neeb}.
\begin{definition}[Locally convex Lie group]
	Let $ G $ be a smooth manifold modeled on a locally convex vector space which is also a group. If these structures are compatible in the sense that group multiplication $ G \times G \to G $ and inversion $ G \to G $ are smooth, then $ G $ is a \emph{locally convex Lie group}. 
\end{definition}

\section{Auxiliary results for Sections \ref{sect: vsTop} and \ref{sect: compo} }
\label{app: details}

In this appendix we provide details and auxiliary results for the first two sections.
\begin{lemma}\label{greenlemma}
	Let $ m,n \in \mathbb{N}$, $ r \in \mathbb{N}_0$, $ A \subseteq \mathbb{R}^m $ compact, \( E \) a locally convex vector space, $ p $ a continuous seminorm on $ E $, and $ R > 0 $.
	
	Then there exists $ K > 0 $ such that for all smooth maps $f \colon \mathbb{R}^m \rightarrow \mathbb{R}^n$ and $ g\colon \mathbb{R}^n \rightarrow E$ for which $ \Vert f \Vert (r,A) \leq R $, one has
	$
	\Vert g \circ f \Vert (r,A,p) \leq K \Vert g \Vert (r, f(A), p) \Vert f \Vert (r,A).
	$
\end{lemma}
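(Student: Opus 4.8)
The plan is to reduce the inequality to a uniform pointwise bound and then push it through the higher-order chain rule recorded in Appendix \ref{calculus}. Since $\|g\circ f\|(r,A,p)$ is the supremum over $a\in A$, $0\le k\le r$ and $\alpha\in\{e_1,\dots,e_m\}^k$ of the numbers $p(\dd^{(k)}(g\circ f)(a;\alpha))$, I would bound each of these by $K\,\|g\|(r,f(A),p)\,\|f\|(r,A)$ for a constant $K$ depending only on $m$, $n$, $r$ and $R$; for $k=0$ one simply has $p(g(f(a)))\le\|g\|(r,f(A),p)$, so the remaining work is the case $1\le k\le r$.

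For such a $k$ I would use the map $\T^k f$ of Definition \ref{Tdef}, which was set up precisely so that $\dd^k(g\circ f)=\dd^k g\circ\T^k f$. Passing from $\dd^{(k)}$ to $\dd^k$ via Proposition \ref{longformula}(2) — there is a fixed tuple $\beta$, depending only on $k$, with $\dd^{(k)}(g\circ f)(a;\alpha)=\dd^k(g\circ f)(a;\iota_\beta(\alpha))$ — and writing $\T^k f(a;w)=(f(a),\tilde{\T}^k f(a;w))$, this gives
\[
\dd^{(k)}(g\circ f)(a;\alpha)=\dd^k g\bigl(f(a);\tilde{\T}^k f(a;\iota_\beta(\alpha))\bigr).
\]
Now Proposition \ref{longformula}(1), applied to $\dd^k g$, rewrites the right-hand side as a finite $\mathbb{N}_0$-linear combination, with coefficients depending only on $k$, of terms $\dd^{(j)}g(f(a);z_{l_1},\dots,z_{l_j})$ with $1\le j\le k$, where $z_1,\dots,z_{2^k-1}$ denote the components of $\tilde{\T}^k f(a;\iota_\beta(\alpha))$.

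The next step is a routine induction on $k$, unwinding $\T^k f=\T(\T^{k-1}f)$, which shows that each component $z_l$ is a finite $\mathbb{N}_0$-linear combination — again with coefficients depending only on $k$ — of terms $\dd^{(s)}f(a;\gamma)$ with $1\le s\le k$ and $\gamma$ a subtuple of $\iota_\beta(\alpha)$. As the entries of $\iota_\beta(\alpha)$ are standard basis vectors of $\RR^m$ or zero, each such term either vanishes or, by the very definition of $\|f\|(r,A)$, has $\|\cdot\|_\infty$-norm at most $\|f\|(r,A)$; hence $\|z_l\|_\infty\le C_1\,\|f\|(r,A)$ for a constant $C_1=C_1(r)$. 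Expanding each $z_l$ in the standard basis of $\RR^n$ and using multilinearity of $\dd^{(j)}g(f(a);\cdot)$ (see \eqref{schwarzrule} in Proposition \ref{chainruleprop}) together with $p(\dd^{(j)}g(f(a);e_{s_1},\dots,e_{s_j}))\le\|g\|(r,f(A),p)$ (legitimate as $j\le r$ and $f(a)\in f(A)$), I obtain
\[
p\bigl(\dd^{(j)}g(f(a);z_{l_1},\dots,z_{l_j})\bigr)\le n^{\,j}\,\bigl(C_1\|f\|(r,A)\bigr)^{j}\,\|g\|(r,f(A),p).
\]
Since $j\ge1$ and $\|f\|(r,A)\le R$, one has $\bigl(C_1\|f\|(r,A)\bigr)^{j}\le C_1^{\,r}\max(1,R)^{r-1}\|f\|(r,A)$, and summing the finitely many such terms with their bounded coefficients yields the sought bound on $p(\dd^{(k)}(g\circ f)(a;\alpha))$, hence, after taking suprema and enlarging $K$ if necessary, on $\|g\circ f\|(r,A,p)$.

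No deep analysis is involved — the content is just the iterated chain rule, already available in Appendix \ref{calculus}. The main obstacle is the bookkeeping: checking the inductive description of the components of $\tilde{\T}^k f$, and verifying that all the combinatorial constants that come up (the numbers $n_\alpha$ from Proposition \ref{longformula}, the count of partition-type terms, the powers $n^{\,j}$) depend only on $m$, $n$ and $r$, never on $f$ or $g$. Routing the chain rule through the maps $\T^k$ is what makes this feasible, since it avoids ever writing down an explicit Fa\`a di Bruno expansion.
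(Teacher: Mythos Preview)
Your proposal is correct and follows essentially the same route as the paper's proof: both pass from $\dd^{(k)}$ to $\dd^k$ via Proposition~\ref{longformula}, invoke the chain rule in the form $\dd^k(g\circ f)=\dd^k g\circ\T^k f$, bound the components of $\tilde{\T}^k f$ by a constant times $\|f\|(r,A)$ (the paper does this by an induction on $\|\dd^i\T^j f\|_\infty$, you by the equivalent observation that each component is an $\mathbb{N}_0$-combination of terms $\dd^{(s)}f(a;\gamma)$), then expand $\dd^{(j)}g(f(a);\cdot)$ multilinearly in the standard basis of $\RR^n$ and absorb the extra powers of $\|f\|(r,A)$ using the hypothesis $\|f\|(r,A)\le R$. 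The bookkeeping you flag as the main obstacle is exactly what the paper's two-part proof spells out.
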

\begin{proof}
The proof is naturally divided into two parts.

\textbf{Part 1.}
Our first goal is to control the size of $ \T^k f $ (see Definition \ref{Tdef}). Let $ m, n $ and $ r $ be given. The precise statement we want to show is that for all $ k $ with $ 0 \leq k \leq r $ and all $ a \in A $, and for all $ z_1, z_2, \dots , z_{2^k - 1} \in \lbrace 0, e_1, \dots , e_m \rbrace $ one has 
\begin{displaymath}
\Vert \T^k f (a; z_1, z_2, \dots , z_{2^k - 1}) \Vert_{\infty} \leq c \Vert f \Vert (r, A) =: C,
\end{displaymath}
where $ c \geq 1 $ is a constant (depending only on $ r $, $ m $ and $ n $ ) such that 
\begin{displaymath}
\Vert \dd^k f (a;z_1,\dots,z_{2^k -1}) \Vert_\infty \leq c \Vert f \Vert (r, A).
\end{displaymath}
Such a constant $ c $ exists by Proposition \ref{longformula}. We prove the seemingly more general statement that for all non-negative integers $ i $ and $ j $ such that $ 0 \leq i + j \leq r $ one has
\begin{equation}
\label{inductioninequality}
\Vert \dd^i \T^j f(a;z_1,\dots,z_{2^{i+j}-1}) \Vert_\infty \leq C,
\end{equation}
by induction on $ i + j $.
When $ i + j = 0 $, both $ i = 0 $ and $ j = 0 $, so the left hand side in inequality \eqref{inductioninequality} is $ \Vert f(a) \Vert_\infty \leq C $, by the choice of $ c $.

Now let $ k $ be a positive integer with $ 0 \leq k \leq r-1 $ and assume that inequality \eqref{inductioninequality} holds for all non-negative integers $ i $ and $ j $ with $ i+j \leq k $. If $ i + j = k + 1 $ and $ 1 \leq j $, then 
\begin{align*}
& \Vert \dd^i \T^j f(a;z_1,\dots,z_{2^{i+j}-1}) \Vert_\infty \\
	=& \Vert \left( \dd^i (\T^{j-1} f (a;z_1,\dots,z_{2^{i+j-1}-1}), \dd \T^{j-1} f (a;z_1,\dots,z_{2^{i+j}-1}) \right) \Vert_\infty \\
=& \left\Vert \left( \dd^i \T^{j-1} f(a;z_1,\dots,z_{2^{i+j-1}-1}) , \dd^{i+1} \T^{j-1} f(a;z_1,\dots,z_{2^{i+j}-1}) \right) \right\Vert_\infty \\
\leq& \max \left( \Vert \dd^i \T^{j-1} f(a;z_1,\dots,z_{2^{i+j-1}-1}) \Vert_\infty , \Vert \dd^{i+1} \T^{j-1} f(a;z_1,\dots,z_{2^{i+j}-1}) \Vert_\infty \right),
\end{align*}
by definition of $ \T $ and the formula $ \dd (h_1,h_2) = (\dd h_1 , \dd h_2) $. By the induction hypothesis, $ \Vert \dd^i \T^{j-1} f \Vert_\infty \leq C $, so it remains to check that $ \Vert \dd^{i+1} \T^{j-1} f \Vert_\infty \leq C $. This last inequality is just what we started with, but with $ i $ increased by one and $ j $ decreased by one. Iterating the argument above, we are left with the case $ i = k + 1 \leq r $ and $ j = 0 $. This case holds by the choice of $ c $.	

\textbf{Part 2.}
Given \( 0 \leq k \leq r \), \( a \in A \), and $ y_1, \dots , y_k \in \lbrace e_1, \dots , e_m \rbrace \subset \mathbb{R}^m $, we would like to control the size of $\dd^{(k)} (g \circ f) (a;y_1, \dots , y_k).$
By Proposition \ref{longformula} there exist $ z_1, \dots , z_{2^k - 1} \in \lbrace 0 , y_1, \dots , y_k \rbrace $ such that
\begin{align*}
&  \dd^{(k)} (g \circ f) (a;y_1,\dots,y_k) = \dd^k (g \circ f) (a;z_1, \dots , z_{2^k - 1}) \\
=& \dd^k g \circ \T^k f (a;z_1,\dots,z_{2^k - 1}) = \dd^k g (f(a); \tilde{\T}^k f (a;z_1, \dots,z_{2^k-1}) \\
=& \sum_{j=1}^k \sum_{1\leq \alpha_1 < \cdots < \alpha_j \leq 2^k - 1} n_\alpha \dd^{(j)} g(f(a); \pr_\alpha (\tilde{\T}^k f (a;z_1, \dots,z_{2^k-1})) ),
\end{align*}
where the second sum is taken over all increasing $ j $-tuples $ \alpha = (\alpha_1,\dots,\alpha_j) $, each of the $ n_\alpha $ are fixed numbers as in Proposition \ref{longformula}, $ \pr_\alpha (t_1,\dots,t_{2^k-1}) = (t_{\alpha_1},\dots,t_{\alpha_j}) $, and $ \tilde{\T} $ is as in Definition \ref{Tdef}.
There exist $ \lambda_{j,i} \in \mathbb{R} $ such that
\begin{displaymath}
\tilde{\T}^k f (a;z_1,\dots,z_{2^k - 1} ) = \left( \sum_{i=1}^n \lambda_{1,i} e_i , \dots , \sum_{i=1}^n \lambda_{2^k-1,i}e_i \right).
\end{displaymath}
Note that every $ |\lambda_{i,j}| \leq C $ by part 1 of the proof and our choice of norm $ \Vert\cdot \Vert_\infty $! We have
\begin{align*}
& \dd^{(j)} g(f(a);\pr_\alpha \tilde{\T}^k f(a;z_1,\dots,z_{2^k-1}) \\
=& \sum_{(i_1,\dots,i_j) \in [n]^j} \lambda_{\alpha_1,i_1} \lambda_{\alpha_2, i_2} \cdots \lambda_{\alpha_j,i_j} \dd^{(j)} g (f(a);e_{i_1},\dots,e_{i_j})
\end{align*}
by \eqref{schwarzrule} in Proposition \ref{chainruleprop}.
Recall that $ p $ is a seminorm on $ E $. From the preceding paragraphs we have
\begin{align*}
& p \left( \dd^{(k)} (g \circ f) (a;y_1,\dots,y_k) \right) \\
=& p \left( \sum_{j=1}^k \sum_{1\leq \alpha_1 < \cdots < \alpha_j \leq 2^k - 1} n_\alpha \sum_{(i_1,\dots,i_j) \in [n]^j} \lambda_{\alpha_1,i_1} \lambda_{\alpha_2, i_2} \cdots \lambda_{\alpha_j,i_j} \dd^{(j)} g (f(a);e_{i_1},\dots,e_{i_j}) \right) \\
\leq& \sum_{j,\alpha,i} |n_\alpha||\lambda_{\alpha_1,i_1}|\cdots |\lambda_{\alpha_j,i_j}| p \left( \dd^{(j)} g(f(a);e_{i_1},\dots,e_{i_j}) \right) \\
\leq& \left( \sum_{j,\alpha,i} |n_\alpha| C^j \right) \Vert g \Vert (r,f(A),p) \leq \left( \sum_{j,\alpha,i} |n_\alpha| c^j R^{j-1} \right) \Vert f \Vert (r,A) \Vert g \Vert (r,f(A),p),
\end{align*}
where the last inequality holds since $ C = c\Vert f \Vert (r,A) \leq cR $ (recall that $ \Vert f \Vert (r,A) \leq R $ by assumption). Setting $ K = \sum |n_\alpha|c^j R^{j-1} $, we obtain
\begin{displaymath}
p \left( \dd^{(k)} (g \circ f) (a;y_1,\dots,y_k) \right) \leq K \Vert f \Vert (r,A) \Vert g \Vert (r,f(A),p).
\end{displaymath}
By definition of $ \Vert g \circ f \Vert (r,A,p) $ the assertion follows.
\end{proof}

We now prove that for spaces of smooth functions to locally convex spaces it suffices to consider basic neighborhoods with respect to the identity charts.

\begin{lemma}\label{lem: atlaschoice}
  Let $M$ be a finite-dimensional manifold and $E$ be a locally convex space. 
  
  Consider the topology $\mathcal{T}$  on $C^\infty (M,E)$ generated by basic neighborhoods which arise from elementary neighborhoods of the form $\mathcal{N}^{r} (f; A,(U, \phi),(E,\id_E),q, \epsilon)$.
  Then $\mathcal{T}$ coincides with the very strong topology on $C^\infty (M,E)$.	
\end{lemma}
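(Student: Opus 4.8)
The plan is to compare the topology $\mathcal{T}$ generated by elementary neighborhoods using only identity charts on $E$ with the full very strong topology $\mathcal{T}_{\text{vS}}$. Since every elementary neighborhood of the restricted form is in particular an elementary neighborhood of the general form, $\mathcal{T}$ is coarser than $\mathcal{T}_{\text{vS}}$. The work lies in the reverse inclusion: given a basic neighborhood $\mathcal{U} = \bigcap_{i\in\Lambda} \mathcal{N}^{r_i}(f;A_i,(U_i,\phi_i),(V_i,\psi_i),q_i,\epsilon_i)$ of $f$ in the very strong topology, I must produce a $\mathcal{T}$-neighborhood $\mathcal{U}'$ of $f$ with $\mathcal{U}'\subseteq\mathcal{U}$. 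As usual, it suffices to handle a single elementary neighborhood $\mathcal{N} = \mathcal{N}^{r}(f;A,(U,\phi),(V,\psi),q,\epsilon)$, locally finiteness being preserved under the splitting; so I would reduce to showing that every such $\mathcal{N}$ contains a $\mathcal{T}$-basic neighborhood of $f$ whose underlying compact family refines $\{A\}$ (so that intersecting over $i\in\Lambda$ keeps things locally finite).

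The key point is that $\psi\colon V \to \psi(V)\subseteq E$ is a chart of the \emph{manifold} $E$, and on $\phi(A)$ the map $\psi\circ f\circ\phi^{-1}$ differs from $f\circ\phi^{-1}$ (viewed in the identity chart) by the local diffeomorphism $\psi$ of an open subset of $E$; comparing derivatives of $\psi\circ h\circ\phi^{-1} - \psi\circ f\circ\phi^{-1}$ with derivatives of $h\circ\phi^{-1} - f\circ\phi^{-1}$ is exactly the kind of estimate furnished by Lemma \ref{greenlemma} applied to the transition map $\psi$ (more precisely to $\psi$ composed with a chart, but $\psi$ already takes values in a locally convex space, so one uses the seminorm estimate for $\|\psi\circ h\circ\phi^{-1} - \psi\circ f\circ\phi^{-1}\|(r,\phi(A),q)$ controlled by $\|h\circ\phi^{-1} - f\circ\phi^{-1}\|(r,\phi(A),p')$ for a suitable seminorm $p'$, provided $h\circ\phi^{-1}$ stays in a fixed bounded neighborhood of $f\circ\phi^{-1}$). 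Concretely I would: (1) choose a compact neighborhood $B$ of $\phi(A)$ in $\phi(U)$ and a convex bounded set $P\subseteq\psi(V)$ containing $f(\phi^{-1}(B))$; (2) using continuity of $\psi$ and Lemma \ref{greenlemma} (with the roles of the composed maps being $\psi$ on the outside), fix $\delta>0$ and a continuous seminorm $p'$ on $E$ so that whenever $h\in C^\infty(M,E)$ satisfies $h(A)\subseteq V$ and $\|h\circ\phi^{-1}-f\circ\phi^{-1}\|(r,\phi(A),p')<\delta$ one has both $\|\psi\circ h\circ\phi^{-1}-\psi\circ f\circ\phi^{-1}\|(r,\phi(A),q)<\epsilon$ and $h(A)\subseteq V$; (3) also need a $C^0$-type condition (using continuity of $\psi^{-1}$ or simply openness) ensuring $h(A)\subseteq V$ — this can be packaged into the same neighborhood by shrinking $\delta$ and adding one more elementary neighborhood $\mathcal{N}^0(f;A,(U,\phi),(E,\id),p'',\delta')$ guaranteeing $h(\phi^{-1}(A))$ stays in $\psi^{-1}(P)\subseteq V$. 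Then $\mathcal{N}^r(f;A,(U,\phi),(E,\id_E),p',\delta)\cap\mathcal{N}^0(f;A,(U,\phi),(E,\id_E),p'',\delta')\subseteq\mathcal{N}$, and this is a $\mathcal{T}$-basic neighborhood with underlying compact family $\{A\}$.

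**Main obstacle.** The delicate step is item (3): controlling that $h$ maps $A$ into the chart domain $V$ using only seminorm conditions in the identity chart on $E$. For finite-dimensional or normed $E$ this is immediate from a $C^0$-estimate, but for a general locally convex $E$ one must be careful that "$\psi(V)$ open" together with a single seminorm ball is enough to stay inside $\psi(V)$ — which is true because $f(\phi^{-1}(A))$ is compact in the open set $\psi(V)$, so it admits a neighborhood of the form $\bigcup_{x}B^{p_x}_{\epsilon_x}(f(x))$, and by compactness finitely many suffice, whence a single seminorm $p''=\sup p_{x_j}$ and a single radius work on all of $f(\phi^{-1}(A))$. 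Once this uniform "tube" around the compact set $f(\phi^{-1}(A))$ inside $\psi(V)$ is in hand, the derivative estimate via Lemma \ref{greenlemma} applied to the smooth map $\psi$ (restricted to a fixed bounded neighborhood, to get the constant $R$ and hence $K$) closes the argument. I would then remark that the same scheme, applied over the locally finite family $\{A_i\}$ and using that finite refinements of locally finite families are locally finite, gives the statement for arbitrary basic very strong neighborhoods, completing the proof that $\mathcal{T}=\mathcal{T}_{\text{vS}}$.
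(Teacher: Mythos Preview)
Your overall plan is sound, and the coarser direction together with the $C^0$-tube argument in your item (3) are correct. The genuine gap is in step (2), where you invoke Lemma~\ref{greenlemma} ``with $\psi$ on the outside'' to control $\lVert \psi\circ h\circ\phi^{-1}-\psi\circ f\circ\phi^{-1}\rVert(r,\phi(A),q)$ in terms of $\lVert h\circ\phi^{-1}-f\circ\phi^{-1}\rVert(r,\phi(A),p')$. Lemma~\ref{greenlemma} is stated for a composition $g\circ f$ with $f\colon\RR^m\to\RR^n$ and $g\colon\RR^n\to E$; both the quantity $\lVert f\rVert(r,A)$ and the quantity $\lVert g\rVert(r,f(A),p)$ require the intermediate space to be finite-dimensional (Definition~\ref{norm} is only set up for maps out of $\RR^m$, and the proof of Lemma~\ref{greenlemma} uses the standard basis of $\RR^n$ in an essential way). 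In your situation the inner map $h\circ\phi^{-1}$ lands in the possibly infinite-dimensional space $E$ and the outer map $\psi$ is a diffeomorphism between open subsets of $E$, so Lemma~\ref{greenlemma} simply does not apply, and there is no obvious elementary replacement: the higher chain rule for $\dd^{(k)}(\psi\circ g)$ produces terms $\dd^{(j)}\psi(g(x);\,\cdot\,)$ that are $j$-linear maps $E^j\to E$, for which you have no uniform seminorm control without further input.

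This is exactly why the paper does not attempt a direct estimate. Instead it passes through the compact open $C^\infty$-topology: it chooses a compact neighborhood $C$ of $A$ with $f(C)\subseteq V$, observes that elementary neighborhoods are subbasic for that topology, and then quotes an external continuity result (\cite[Proposition~4.23(a)]{glockomega}) asserting that $(\psi^{-1})_*\colon \lfloor C,\psi(V)\rfloor \to C^\infty(\interior C,E)_{\text{co}}$ is continuous. That black box supplies precisely the missing estimate---continuity of post-composition by a smooth map between open subsets of a locally convex space---and from it one reads off finitely many identity-chart elementary neighborhoods contained in $\mathcal{N}$, with underlying compacts inside $C$. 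The passage to basic neighborhoods then uses \cite[30.C.10]{cech} to thicken the $A_i$ to a locally finite family $\{C_i\}$ of compact neighborhoods, which is the same local-finiteness bookkeeping you describe. If you want to avoid citing \cite{glockomega}, you would have to prove the continuity of $\psi_*$ at $f\circ\phi^{-1}$ in the compact open $C^\infty$-topology from scratch; Lemma~\ref{greenlemma} alone will not do it.
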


\begin{proof}
 Restricting our choice of charts on $E$ to the canonical chart clearly generates a topology which is coarser than the very strong topology. 
 
 To see that it is also finer, consider first an arbitrary elementary neighborhood $\mathcal{N} = \mathcal{N}^{r} (f; A,(U, \phi),(V,\psi),q, \epsilon)$ in $\vsmooth (M,E)$.
 Recall that the compact open $C^\infty$-topology (see \cite[Defintion I.5.1]{neeb}) allows us to control the derivatives of functions on compact sets.
 We denote by $C^\infty (M,E)_{\text{co}}$ the vector space with this topology and note that the elementary neighborhoods of the very strong topology form a subbase of the compact open $C^\infty$-topology.
 
 Now choose a compact neighborhood $C \subseteq U$ of $A$ such that $f(C) \subseteq V$. 
 We endow the subset $\lfloor C,\psi(V)\rfloor := \{g\in C^\infty (M,E) \mid g(C) \subseteq \psi(V)\}$ with the subspace topology induced by the compact open $C^\infty$-topology.
 As $\psi (V) \subseteq E$ is open, we note that $\lfloor C,\psi(V)\rfloor$ is open in $C^\infty (M,E)_{\text{co}}$.
 Now \cite[Proposition 4.23 (a)]{glockomega} shows that 
 \begin{displaymath}
  (\psi^{-1})_* \colon \lfloor C,\psi(V) \rfloor \rightarrow C^\infty (\interior C,E)_{\text{co}},\quad  h \mapsto \psi^{-1} \circ h
 \end{displaymath}
 is continuous. 
 Moreover, the set $\mathcal{N}_C := \mathcal{N}^{r} (\psi \circ f|_{\interior C}; A,(U\cap \interior C, \phi),(E,\id_E),q, \epsilon)$ is open in $C^\infty (\interior C,E)_{\text{co}}$.
 Using the description of subbasic neighborhoods of the compact open $C^\infty$-topology in \cite{glock1}, we obtain elementary neighborhoods 
 \begin{equation}\label{eq: verify}
  \bigcap_{k=1}^N \mathcal{N}^{r} (f; A_k,(U_k, \phi_k),(E,\id_E),q_k, \epsilon) \subseteq  ((\psi^{-1})_*)^{-1} (\mathcal{N}_C) = \mathcal{N} \cap \lfloor C,V\rfloor \subseteq \mathcal{N}.
 \end{equation}
 Observe that the compact sets $A_k$ are contained by construction in $\interior C$.\footnote{We invite the reader to verify this in the proof of \cite[Proposition 4.23 (a)]{glockomega}.}
 Summing up, \eqref{eq: verify} together with Lemma \ref{trianglelemma} shows that every elementary neighborhood in $C^\infty (M,E)$ is open in $\mathcal{T}$.
 
 It is now easy to prove that every basic neighborhood is open in $\mathcal{T}$. 
 To this end let $\mathcal{M} = \bigcap_{k \in \mathbb{N}} \mathcal{N}_k$ be a basic neighborhood around $f \in C^\infty (M,E)$ with $\{A_k\}_{k \in \mathbb{N}}$ its the underlying compact family.
 Use \cite[30.C.10]{cech} to construct for every $A_k$ a compact neighborhood $C_k$ such that $\{C_k\}_{k \in \mathbb{N}}$ is locally finite.
 Now we proceed for every elementary neighborhood $\mathcal{N}_k$ as above (working with $C_k$!). 
 Since the family $\{C_k\}_{k \in \mathbb{N}}$ is locally finite, we thus end up with a basic neighborhood around $f$ which is contained in $\mathcal{T}$.
 As all basic neighborhoods of the very strong topology are open in $\mathcal{T}$, the topology $\mathcal{T}$ is finer as the very strong topology.
 This proves the assertion.
\end{proof}

\begin{lemma}\label{lem: atlaschoice2}
  Let $n \in \mathbb{N}$ and $E$ be a locally convex space. 
  Consider the topology $\mathcal{T}$ on $C^\infty (\mathbb{R}^n,E)$ generated by basic neighborhoods which arise from elementary neighborhoods of the form $\mathcal{N}^{r} (f; A,(\mathbb{R}^n, \id_{\mathbb{R}^n}),(E,\id_E),q, \epsilon)$.
  Then $\mathcal{T}$ coincides with the very strong topology on $C^\infty (M,E)$.	
\end{lemma}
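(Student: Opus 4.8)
The plan is the standard two-sided comparison. Every elementary neighbourhood of the restricted form $\mathcal{N}^{r}(f;A,(\mathbb{R}^n,\id_{\mathbb{R}^n}),(E,\id_E),q,\epsilon)$ is in particular an elementary neighbourhood in the sense of Definition \ref{elementarynbh}, so $\mathcal{T}$ is automatically coarser than the very strong topology, and only the reverse inclusion requires work. By Lemma \ref{lem: atlaschoice} the very strong topology on $C^\infty(\mathbb{R}^n,E)$ already has a basis consisting of sets $\bigcap_{k}\mathcal{N}^{r_k}(f;A_k,(U_k,\phi_k),(E,\id_E),p_k,\epsilon_k)$ with $\{A_k\}_k$ locally finite and all target charts the identity, so it suffices to show each such set is $\mathcal{T}$-open; and by Lemma \ref{trianglelemma} it is enough to exhibit, for an arbitrary centre $f$, one $\mathcal{T}$-basic neighbourhood of $f$ contained in it.

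Everything then reduces to one quantitative statement: for an elementary neighbourhood $\mathcal{N}=\mathcal{N}^{r}(f;A,(U,\phi),(E,\id_E),q,\epsilon)$ with $A\subseteq U\subseteq\mathbb{R}^n$ compact, there is a constant $L>0$ depending only on $r$, $n$, $\phi$ and $A$ --- crucially \emph{not} on $f$, $q$ or $\epsilon$ --- with
\[
\|(h-f)\circ\phi^{-1}\|(r,\phi(A),q)\ \le\ L\,\|h-f\|(r,A,q)\qquad\text{for all }h\in C^\infty(\mathbb{R}^n,E).
\]
Granting this, the admissible elementary neighbourhood $\mathcal{N}^{r}(f;A,(\mathbb{R}^n,\id_{\mathbb{R}^n}),(E,\id_E),q,\epsilon/L)$ is contained in $\mathcal{N}$ (the target constraint $h(A)\subseteq E$ is vacuous), which is the sought $\mathcal{T}$-neighbourhood. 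For a locally finite intersection one does this patch by patch with constants $L_k$ and intersects: the underlying compact family $\{A_k\}_k$ is left untouched, hence still locally finite, so $\bigcap_k\mathcal{N}^{r_k}(f;A_k,(\mathbb{R}^n,\id_{\mathbb{R}^n}),(E,\id_E),p_k,\epsilon_k/L_k)$ is again a $\mathcal{T}$-basic neighbourhood. The upgrade from the centre $f$ to an arbitrary point of the neighbourhood is handled by Lemma \ref{trianglelemma}, using precisely that each $L_k$ is independent of the centre.

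To prove the inequality I would extend $\phi^{-1}\colon\phi(U)\to\mathbb{R}^n$ to a smooth map $\psi\colon\mathbb{R}^n\to\mathbb{R}^n$ agreeing with $\phi^{-1}$ on a neighbourhood of the compact set $\phi(A)$ (a cutoff argument as in the proof of Lemma \ref{biglemma}), so that $(h-f)\circ\psi$ and $(h-f)\circ\phi^{-1}$ coincide near $\phi(A)$ and hence have equal $\|\cdot\|(r,\phi(A),q)$. Then Lemma \ref{greenlemma}, applied with outer map $h-f\colon\mathbb{R}^n\to E$, inner map $\psi$, compact set $\phi(A)$, and the bound $R:=\|\psi\|(r,\phi(A))$, gives
\[
\|(h-f)\circ\phi^{-1}\|(r,\phi(A),q)=\|(h-f)\circ\psi\|(r,\phi(A),q)\le K\,\|h-f\|(r,\psi(\phi(A)),q)\,\|\psi\|(r,\phi(A)),
\]
and since $\psi(\phi(A))=\phi^{-1}(\phi(A))=A$ this is $\le L\,\|h-f\|(r,A,q)$ with $L:=K\,\|\psi\|(r,\phi(A))$; inspection of the proof of Lemma \ref{greenlemma} (where $K=\sum|n_\alpha|c^{j}R^{j-1}$) shows $K$, hence $L$, depends only on $r$, $n$, $\phi$ and $A$. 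Alternatively one can bypass the extension and simply transcribe the proof of Lemma \ref{greenlemma} verbatim with inner map $\phi^{-1}$, noting that every evaluation there takes place at points of $\phi(A)\subseteq\phi(U)$ where $\phi^{-1}$ is smooth.

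I expect no genuine obstacle: the substantive analytic content is already packaged in Lemma \ref{greenlemma}, and the scheme of reducing a chart change to such an estimate is exactly the one used in Lemma \ref{lem: atlaschoice}. The only points needing care are bookkeeping ones --- that $L$ is independent of the centre $f$ (so Lemma \ref{trianglelemma} applies with a fixed constant per patch) and that replacing the source charts by $\id_{\mathbb{R}^n}$ does not alter the compact sets $A_k$ and therefore preserves local finiteness of the underlying compact families.
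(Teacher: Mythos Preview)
Your proof is correct and follows essentially the same route as the paper: both reduce to the estimate $\|(h-f)\circ\phi^{-1}\|(r,\phi(A),q)\le L\,\|h-f\|(r,A,q)$ via Lemma~\ref{greenlemma}, then invoke Lemma~\ref{trianglelemma} and note that the underlying compact family is unchanged. Your extension of $\phi^{-1}$ to a globally defined $\psi$ is a minor technical refinement the paper glosses over (it applies Lemma~\ref{greenlemma} directly to $\phi^{-1}$, implicitly relying on the fact that only values on $\phi(A)$ matter), but the argument is otherwise the same.
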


\begin{proof}
 Clearly restricting our choice of charts to $\id_{\mathbb{R}^n}$ generates a topology coarser than the very strong topology. 
 To see that $\mathcal{T}$ is also finer, consider an elementary neighborhood $\mathcal{N} = \mathcal{N}^{r} (f; A,(U, \phi),(E,\id_E),q, \epsilon)$ in $\vsmooth (\mathbb{R}^n,E)$.
 
 Now $g \in \mathcal{N}$ if and only if $\Vert f\circ \phi^{-1}-g\circ \phi^{-1}\Vert (r,\phi(A),q) < \epsilon$.
 Using Lemma \ref{greenlemma}, we obtain a constant $K>0$ with 
 \begin{align*}
  \Vert f\circ \phi^{-1}-g\circ \phi^{-1}\Vert (r,\phi(A),q) &= \Vert (f-g)\circ \phi^{-1}\Vert (r,\phi(A),q)\\ &\leq K \Vert f-g\Vert (r,A,q) \Vert \phi^{-1}\Vert (r,\phi(A))
 \end{align*}
  Together with Lemma \ref{trianglelemma} this implies that for every $h \in \mathcal{N}$ there is $\delta >0$ with $\mathcal{N}_h := \mathcal{N}^{r} (h; A,(\mathbb{R}^n, \id_{\mathbb{R}^n}),(E,\id_E),q, \delta) \subseteq \mathcal{N}$.
  Hence $\mathcal{N}$ is also open in $\mathcal{T}$. As $\mathcal{N}_h$ controls functions on the same compact set as $\mathcal{N}$, the argument generalizes to basic neighborhoods. 
  Summing up, basic neighborhoods are open in $\mathcal{T}$ which is thus finer as the very strong topology. 
\end{proof}
Finally, we provide a standard fact about \( \sigma \)-compact topological spaces.
\begin{lemma}
	\label{dugundjifact}
	Let $ M $ be a \( \sigma \)-compact topological space. Then \( M \) admits a locally finite exhaustion by compact sets, i.e. there exists a locally finite family \( \lbrace A_i \rbrace_{i \in \mathbb{N}} \) of compact subsets of \( M \) such that \( \bigcup_{i \in \mathbb{N}} A_i = M \).
\end{lemma}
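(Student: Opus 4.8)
The plan is to reduce the statement to the classical construction of a nested exhaustion by compacta. One should note at the outset that the conclusion actually forces $M$ to be locally compact: if $\{A_i\}$ is a locally finite cover of $M$ by compact sets and $W$ is a neighbourhood of a point $x$ meeting only $A_{i_1},\dots,A_{i_k}$, then $W\subseteq A_{i_1}\cup\dots\cup A_{i_k}$, so $x$ has a relatively compact neighbourhood. Since in every application in this paper $M$ is a finite-dimensional manifold or an open subset of one, I would carry out the proof under the (for our purposes harmless) additional assumption that $M$ is locally compact and Hausdorff.

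The first step is to produce compact sets $L_1\subseteq L_2\subseteq\cdots$ with $\bigcup_{n}L_n=M$ and $L_n\subseteq\interior(L_{n+1})$ for every $n$. Writing $M=\bigcup_n K_n$ with each $K_n$ compact, I would put $L_1:=K_1$ and, inductively, cover the compact set $L_n\cup K_{n+1}$ by finitely many open sets with compact closure (using local compactness and Hausdorffness) and let $L_{n+1}$ be the union of these finitely many closures; then $L_n\cup K_{n+1}\subseteq\interior(L_{n+1})$, which yields both $L_n\subseteq\interior(L_{n+1})$ and $M=\bigcup_n K_n\subseteq\bigcup_n L_n$. The second step is to set $A_1:=L_1$ and $A_n:=L_n\setminus\interior(L_{n-1})$ for $n\geq 2$. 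Each $A_n$ is a closed subset of the compact set $L_n$, hence compact. They cover $M$: given $x\in M$, if $k$ is minimal with $x\in L_k$ then $x\in A_1$ when $k=1$, and $x\in L_k\setminus L_{k-1}\subseteq A_k$ when $k\geq 2$. Finally $\{A_n\}$ is locally finite: the open sets $\interior(L_m)$ cover $M$, and if $x\in\interior(L_m)$ then for every $n\geq m+1$ one has $\interior(L_m)\subseteq\interior(L_{n-1})$ (trivially if $n=m+1$, and otherwise since $L_m\subseteq\interior(L_{m+1})\subseteq\dots\subseteq\interior(L_{n-1})$), which is disjoint from $A_n=L_n\setminus\interior(L_{n-1})$; hence $\interior(L_m)$ meets at most $A_1,\dots,A_m$. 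Relabelling $\{A_n\}_{n\in\mathbb{N}}$ then gives the desired family.

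There is no serious obstacle here: the argument is a standard exhaustion manoeuvre and every step is elementary. The two points that require a little care are exactly those highlighted above. First, one genuinely needs local compactness (together with Hausdorffness, to extract relatively compact neighbourhoods) for the construction of the $L_n$, so the hypothesis in the statement has to be read with that understood. Second, one must remember to include the seed set $A_1=L_1$ separately, since otherwise the $A_n$ would cover only $M\setminus\interior(L_1)$ rather than all of $M$.
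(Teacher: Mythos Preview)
Your proof is correct and follows essentially the same approach as the paper: both pass through a nested exhaustion $L_1\subseteq L_2\subseteq\cdots$ with $L_n\subseteq\interior(L_{n+1})$ and then take the shells $A_1=L_1$, $A_n=L_n\setminus\interior(L_{n-1})$. The only difference is that the paper outsources the construction of the nested exhaustion to a reference (Dugundji, Theorem~XI.7.2) and omits the verification of local finiteness, whereas you build the $L_n$ by hand and check everything explicitly; your observation that the statement tacitly requires local compactness is also apt.
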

\begin{proof}
	By \cite{dugundji}[Theorem XI.7.2] there exists a family $ \lbrace K_i \rbrace_{i \in \mathbb{N}} $ of compact subspaces of $ M $ such that $ K_i \subseteq \interior K_{i+1} $ for all $ i \in \mathbb{N} $ and $ \bigcup K_i = M $. Now define $ A_1' = K_1 $ and $ A_i' = K_i \setminus \interior K_{i-1} $ for $ i \geq 2 $. 
	The resulting family $ \lbrace A_i' \rbrace_{i \in \mathbb{N} } $ is a locally finite exhaustion of \( M \) by compact sets.
\end{proof}

\section{Alternative description of the topology via jet bundles}
\label{folklore}
In this section we give a proof of the ``folklore'' fact that Michor's \( \mathcal{D} \)-topology on \( \smooth (M,N) \) (cf. \cite[Section 1]{michor}) coincides with the very strong topology when the target \( N \) is finite-dimensional. This is claimed in many places in the literature, e.g.\ \cite{hirsch} (where the $\mathcal{D}$-topology is called strong Whitney topology), but the authors of this paper have been unable to locate a proof. The \( \mathcal{D} \)-topology is constructed using jet bundles (cf.\ \cite[Section 1]{michor} and \cite[Section 41]{KM97}). We will briefly recall this construction here.

\begin{definition}
	Let \( U \subseteq \RR^m \) and \( V \subseteq \RR^n \) be open. For \( 0 \leq r \leq \infty \) we define the space of \emph{\( r \)-jets} from \( U \) to \( V \) by
	\[
		J^r (U,V) := U \times V \times  \prod_{i=1}^r L^{i}_{\text{sym}}(\RR^m, \RR^n),
	\]
	where \( L^{i}_{\text{sym}} (\RR^m,\RR^n) \) is the space of symmetric linear maps \( (\RR^m)^i \to \RR^n \). We topologize \( \prod_{i=1}^r L^{i}_{\text{sym}}(\RR^m , \RR^n ) \) with respect to the operator norm for multilinear maps. 

	For a smooth map \( f \colon U \to V \) we define
	\[
		j^rf(x) := \left( x, f(x), \dd^{(1)} f (x;\cdot), \frac{1}{2!} \dd^{(2)} f(x;\cdot), \dots, \frac{1}{r!} \dd^{(r)} f(x;\cdot) \right), 
	\]
	called the \emph{ \( r \)-jet} of \( f \) at \( x \).
\end{definition}
\begin{definition}
	Let \( M \) and \( N \) be finite-dimensional smooth manifolds. We define an \( r \)-jet from \( M \) to \( N \) to be an equivalence class of pairs \( (f,x) \), where \( f \colon M \to N \) is a smooth map and \( x \in M \). Two pairs \( (f,x) \) and \( (f',x') \) are equivalent if \( x = x' \) and \( T_x^r f = T_x^r f' \), where \( T^r \) is the \( r \)-th tangent mapping. We write \( j^r f(x) \) for the equivalence class of \( (f,x) \).
	The set of all \( r \)-jets from \( M \) to \( N \) is denoted \( J^r (M,N) \).
\end{definition}
In the case \( M = \RR^m \) and \( N = \RR^n \), the \( r \)-jet of a smooth map \( f \colon M \to N \) is represented by the Taylor polynomial of \( f \). So the different definitions above coincide.

If \( f \colon M \to M' \) is smooth, then there is an induced map \( J^r (N,f) \colon J^r (N,M) \to J^r (N,M') \) given by \( J^r (N,f) (j^r g(x)) = j^r (f \circ g)(x) \). 
For a diffeomorphism \( f \) we obtain in addition the map \( J^r (f,N) \colon J^r (M',N) \to J^r (M,N) \) given by \( J^r (f,N) (j^r g(x)) = j^r (g \circ f)(f^{-1}(x)) \).

The set of all \( r \)-jets \( J^r (M,N) \) form a smooth manifold. Suppose \( (U,\phi) \) is a chart for \( M \) and \( (V,\psi) \) is a chart for \( N \).
The map
\begin{align*}
	J^r (\phi^{-1},\psi) \colon J^r (U,V) \to& J^r \left( \phi(U),\psi(V) \right), \\
	J^r (\phi^{-1},\psi) &:= J^r (\phi^{-1} ,\psi(V)) \circ J^r (U,\psi) = J^r (\phi(U),\psi) \circ J^r (\phi^{-1} ,V) 
\end{align*}
is bijective. Note that \( J^r (U,V) \) can be identified with a subset of \( J^r (M,N) \), namely
\[
	J^r (U,V) = \left\lbrace f \in J^r (M,N) : f(U) \subseteq V \right\rbrace
\]
since at each point there exists a smooth map with a given Taylor expansion. The collection \( \left\lbrace \left( J^r (U,V) , J^r (\phi^{-1} , \psi ) \right) \right\rbrace \), where \( (U,\phi) \) and \( (V,\psi) \) runs through the charts for \( M \) and \( N \), respectively, is a smooth atlas for \( J^r (M,N) \). See \cite[1.8]{michor} for details.

Via jet bundles one can define a topology on the space of smooth functions.
Using the embedding $j^\infty \colon C^\infty (M,N) \rightarrow C(M,J^\infty (M,N))$ for $M,N$ finite-dimensional, one pulls back a certain topology to obtain Michor's $\mathcal{D}$-topology on $C^\infty (M,N)$.
We will not describe this construction in detail and refer to \cite[ 4.7.2]{michor} for the following alternative description of the topology

\begin{definition}\label{defn: Dtop}
 Let $M$ and $N$ be finite-dimensional smooth manifolds. 
 We define  
 \begin{displaymath}
	 \Omega (\mathbf{L}, \mathbf{U}) := \{f \in C^\infty (M,N) \mid j^{n} f(L_n) \subseteq U_n \mbox{ for all \( n \in \mathbb{N} \)}\}
 \end{displaymath}
	where $\mathbf{L} = \lbrace L_n\rbrace_{n \in \mathbb{N}}$ is a locally finite family of closed subsets of $M$, and $\mathbf{U} = \lbrace U_n \rbrace_{n\in \mathbb{N}}$ is a family of open subsets $U_n \subseteq J^{n} (M,N)$.
 
	Then the family of sets $\Omega (\mathbf{L}, \mathbf{U})$ where $\mathbf{L}$ runs through all locally finite families of closed sets and $\mathbf{U}$ runs through all families of open subsets $U_n \subseteq J^{n} (M,N)$
 is a basis for a topology on $C^\infty (M,N)$.
 Following Michor, we call this topology the \emph{$\mathcal{D}$-topology} and denote by $C^\infty_{\mathcal{D}} (M,N)$ the smooth functions with this topology.
\end{definition}
\begin{remark}
	Let \( \mathbf{L} = \lbrace L_n \rbrace_{n \in \mathbb{N}} \) be a locally finite family of closed subsets of \( M \), \( \mathbf{r} = \lbrace r_n \rbrace_{n\in\mathbb{N}} \) be a sequence of natural numbers, \( \mathbf{U} = \lbrace U_n \rbrace_{n \in \mathbb{N}} \) be a family of open subsets \( U_n \subseteq J^{r_n} (M,N) \), and set
	\[
		\Omega (\mathbf{r}, \mathbf{L}, \mathbf{U}) := \lbrace f \in \smooth (M,N) : j^{r_n} f(L_n) \subseteq U_n \mbox{ for all \( n \in \mathbb{N} \)} \rbrace.
	\]
	Taking the family of all sets \( \Omega (\mathbf{r}, \mathbf{L}, \mathbf{U} ) \) as \( \mathbf{L} \), \( \mathbf{r} \), and \( \mathbf{U} \) vary also forms a basis for the \( \mathcal{D} \)-topology on \( \smooth (M,N) \). To see this, note that if \( U_n \subseteq J^n (M,N) \) is open, \( k \geq 0 \), and \( \pi \colon J^{n+k} (M,N) \to J^n (M,N) \) is the ``truncation map'', then for all \( f \colon M \to N \) smooth and \( x \in M \), we have \( j^n f(x) \in U_n \) if and only if \( j^{n+k} f(x) \in \pi^{-1} U_n \).
\end{remark}

\begin{lemma}\label{lem: hacking}
	The family of all sets \( \Omega ( \mathbf{r},\mathbf{L}, \mathbf{U} ) \) is a basis for the $\mathcal{D}$-topology \( \smooth_{\mathcal{D}} (M,N) \) if we require that:
	\( \mathbf{r} = \lbrace r_n \rbrace_{n \in \mathbb{N}} \) runs through all sequences of natural numbers, \( \mathbf{L} = \lbrace L_n \rbrace_{n \in \mathbb{N}} \) runs through all locally finite families of \emph{compact} subsets of \( M \) such that each \( L_n \) is contained in some chart $(V_n,\psi_n)$ for \( M \), and \( \mathbf{U} = \lbrace U_n \rbrace_{n \in \mathbb{N}} \) runs through all families of open subsets \( U_n \subseteq J^{r_n} (V_n,W_n) \) where $(W_n, \phi_n)$ is a chart of $N$, 
\end{lemma}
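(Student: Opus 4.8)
The plan is to show that the proposed restricted family and the basis from the remark preceding this lemma — whose members are the sets $\Omega(\mathbf{r},\mathbf{L},\mathbf{U})$ with $\mathbf{L}$ a locally finite family of \emph{closed} subsets of $M$ and each $U_n$ an arbitrary open subset of $J^{r_n}(M,N)$ — generate the same topology. One direction is immediate: if $\mathbf{L}=\{L_n\}$ consists of compact sets, each contained in a chart $(V_n,\psi_n)$ of $M$, and each $U_n\subseteq J^{r_n}(V_n,W_n)$ is open for a chart $(W_n,\phi_n)$ of $N$, then every $L_n$ is closed (the manifold $M$ being Hausdorff) and, since $J^{r_n}(V_n,W_n)$ is a chart domain of $J^{r_n}(M,N)$ and hence open there, $U_n$ is already open in $J^{r_n}(M,N)$. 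Thus every set of the restricted form is one of the sets of the remark, in particular $\mathcal{D}$-open.

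For the reverse direction it suffices, given a $\mathcal{D}$-open $\mathcal{O}$ and $f\in\mathcal{O}$, to produce a restricted $\Omega(\tilde{\mathbf{r}},\tilde{\mathbf{L}},\tilde{\mathbf{U}})$ with $f\in\Omega(\tilde{\mathbf{r}},\tilde{\mathbf{L}},\tilde{\mathbf{U}})\subseteq\mathcal{O}$. By the remark, pick $\Omega(\mathbf{r},\mathbf{L},\mathbf{U})$ with $f\in\Omega(\mathbf{r},\mathbf{L},\mathbf{U})\subseteq\mathcal{O}$, where $\mathbf{L}=\{L_n\}$ is locally finite and closed and $j^{r_n}f(L_n)\subseteq U_n\subseteq J^{r_n}(M,N)$. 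Now "hack" as in Lemma \ref{hackinglemma}, but keeping track of the index $n$: fix a locally finite exhaustion $\{A_i\}_{i\in\mathbb{N}}$ of $M$ by compact sets (Lemma \ref{dugundjifact}) and put $K_{n,i}:=L_n\cap A_i$, so that $\{K_{n,i}\}_{n,i}$ is a locally finite family of compact sets with $\bigcup_i K_{n,i}=L_n$ for each $n$. For every pair $(n,i)$ and every $x\in K_{n,i}$ choose a chart $(V_{n,i,x},\psi_{n,i,x})$ of $M$ about $x$, a chart $(W_{n,i,x},\phi_{n,i,x})$ of $N$ about $f(x)$ with $f(V_{n,i,x})\subseteq W_{n,i,x}$ (shrinking $V_{n,i,x}$), and a compact neighbourhood $P_{n,i,x}\subseteq V_{n,i,x}$ of $x$; set $L_{n,i,x}:=K_{n,i}\cap P_{n,i,x}$. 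Since the interiors of the $P_{n,i,x}$ cover $K_{n,i}$, finitely many of the $L_{n,i,x}$ already cover $K_{n,i}$; relabel the resulting countable collection as $\{\tilde{L}_m\}_{m\in\mathbb{N}}$, each coming with its $M$-chart $\tilde{V}_m$, its $N$-chart $\tilde{W}_m$, the jet order $\tilde{r}_m:=r_n$, and $\tilde{U}_m:=U_n\cap J^{r_n}(\tilde{V}_m,\tilde{W}_m)$, where $n$ is the first index of the triple producing $\tilde{L}_m$.

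What remains is routine verification. The family $\{\tilde{L}_m\}$ is locally finite because each $\tilde{L}_m$ is contained in some $K_{n,i}$, only finitely many $\tilde{L}_m$ arise from each pair $(n,i)$, and $\{K_{n,i}\}$ is locally finite; each $\tilde{L}_m$ is compact and lies in the chart $\tilde{V}_m$; and each $\tilde{U}_m$ is open in $J^{\tilde{r}_m}(\tilde{V}_m,\tilde{W}_m)$. From $\tilde{L}_m\subseteq L_n$ we get $j^{r_n}f(\tilde{L}_m)\subseteq U_n$, and from $\tilde{L}_m\subseteq\tilde{V}_m$ and $f(\tilde{L}_m)\subseteq\tilde{W}_m$ we get $j^{r_n}f(\tilde{L}_m)\subseteq J^{r_n}(\tilde{V}_m,\tilde{W}_m)$, so $j^{\tilde{r}_m}f(\tilde{L}_m)\subseteq\tilde{U}_m$ and hence $f\in\Omega(\tilde{\mathbf{r}},\tilde{\mathbf{L}},\tilde{\mathbf{U}})$. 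Conversely, if $g\in\Omega(\tilde{\mathbf{r}},\tilde{\mathbf{L}},\tilde{\mathbf{U}})$ and $n$ is fixed, the $\tilde{L}_m$ coming from pieces of $L_n$ all have $\tilde{r}_m=r_n$, their union is $L_n$, and $j^{r_n}g(\tilde{L}_m)\subseteq\tilde{U}_m\subseteq U_n$ for each of them, so $j^{r_n}g(L_n)\subseteq U_n$; thus $g\in\Omega(\mathbf{r},\mathbf{L},\mathbf{U})\subseteq\mathcal{O}$, which gives $\Omega(\tilde{\mathbf{r}},\tilde{\mathbf{L}},\tilde{\mathbf{U}})\subseteq\mathcal{O}$.

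The only genuinely delicate point is the local finiteness of the hacked family $\{\tilde{L}_m\}$: localizing each closed $L_n$ chart-by-chart around its points would in general yield an uncountable and not locally finite refinement, so one must first intersect the $L_n$ with the locally finite compact exhaustion $\{A_i\}$ before localizing; and one must equally remember to shrink the jet data to $\tilde{U}_m=U_n\cap J^{r_n}(\tilde{V}_m,\tilde{W}_m)$ so that the containment $\Omega(\tilde{\mathbf{r}},\tilde{\mathbf{L}},\tilde{\mathbf{U}})\subseteq\Omega(\mathbf{r},\mathbf{L},\mathbf{U})$ is preserved. Everything else is bookkeeping with the identification of $J^r(V,W)$ with the set of jets having source in $V$ and target in $W$, and the openness of such sets in $J^r(M,N)$.
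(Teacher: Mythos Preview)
Your argument is correct and follows essentially the same route as the paper: intersect the closed sets $L_n$ with a locally finite compact exhaustion of $M$, subdivide the resulting compacts into chart-sized pieces adapted to $f$, and cut down each $U_n$ to the corresponding jet-chart $J^{r_n}(\tilde V_m,\tilde W_m)$. The only cosmetic difference is that the paper first subdivides the exhaustion itself into chart-sized compacts (so that $K_i\subseteq V_i$ and $f(K_i)\subseteq W_i$) and then intersects with the $L_k$, whereas you intersect first and then refine pointwise; your version has the virtue of making the dependence on $f$ explicit and of only claiming the inclusion $f\in\Omega(\tilde{\mathbf r},\tilde{\mathbf L},\tilde{\mathbf U})\subseteq\Omega(\mathbf r,\mathbf L,\mathbf U)$, which is all that is needed.
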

\begin{proof}
	It suffices to show the following: Given a locally finite family \( \mathbf{L} = \lbrace L_n \rbrace_{n \in \mathbb{N}} \) of closed subsets of \( M \) and a family \( \mathbf{U} = \lbrace U_n \rbrace_{n\in \mathbb{N}} \) of open subsets \( U_n \subseteq J^n (M,N) \), there exist a sequence \( \mathbf{r} = \lbrace r_n \rbrace_{n \in \mathbb{N}} \) of natural numbers, a locally finite family \( \mathbf{A} = \lbrace A_n \rbrace_{n\in \mathbb{N}} \) of compact subsets of \( M \), a family \( \lbrace (V_n , \psi_n ) \rbrace_{n \in \mathbb{N}} \) of charts for \( M \) such that \( A_n \subseteq V_n \) for all \( n \in \mathbb{N} \), a family \( \lbrace (W_n , \phi_n ) \rbrace_{n \in \mathbb{N}} \) of charts for \( N \), and a family \( \mathbf{U}' = \lbrace U_n' \rbrace_{n \in \mathbb{N}} \) of open subsets \( U_n' \subseteq J^{r_n} (V_n, W_n) \) such that \( \Omega (\mathbf{r},\mathbf{A},\mathbf{U}') = \Omega (\mathbf{L}, \mathbf{U}) \).

	Since \( M \) is \( \sigma \)-compact there exists a compact exhaustion \( \mathbf{K} = \lbrace K_n \rbrace_{n \in \mathbb{N}} \) of \( M \), and by dividing each \( K_n \) into finitely many compact subsets each contained in some manifold chart, we may assume that each \( K_n \subseteq V_n \) for some chart \( (V_n, \psi_n ) \) for \( M \). We may further require that there is a chart $(W_n,\phi_n)$ such that $f(K_n) \subseteq W_n$. The family \( \lbrace K_i \cap L_k \rbrace_{(i,k) \in \mathbb{N} \times \mathbb{N}} \) is a locally finite family of compact sets since the \( K_i \) are compact and \( \mathbf{L} \) is a locally finite family of closed sets. Note that \( K_i \cap L_k \subseteq V_i \) for all \( (i,k) \in \mathbb{N} \times \mathbb{N} \). Moreover, for all \( k \in \mathbb{N} \) we have \( j^k f ( K_i \cap L_k ) \subseteq U_k \cap J^{r_k} (V_i,W_i)\) for all \( i \in \mathbb{N} \) if and only if \( j^k f (L_k) \subseteq U_k \).

	Take a bijection \( b \colon \mathbb{N} \times \mathbb{N} \to \mathbb{N} \), set \( A_{b(i,k)} = K_i \cap L_k \), and \( r_{b(i,k)} = k \), and \( U_{b(i,k)}' = U_k \cap J^{r_k} (V_i,W_i) \). Then
	\begin{align*}
		& \Omega (\mathbf{r},\mathbf{A}, \mathbf{U}' ) \\
		=& \lbrace f \in \smooth (M,N) : j^{r_n} f (A_n) \subseteq U_n' \mbox{ for all \( n \in \mathbb{N} \)}\rbrace \\
		=& \lbrace f \in \smooth (M,N) : j^{k} f (K_i \cap L_k) \subseteq U_k \cap J^{r_k} (V_i,W_i) \mbox{ for all \( i \in \mathbb{N} \) and all \( k \in \mathbb{N} \)} \rbrace \\
		=& \lbrace f \in \smooth (M,N) : j^k f(L_k) \subseteq U_k \mbox{ for all \( k \in \mathbb{N} \)} \rbrace \\
		=& \Omega (\mathbf{L},\mathbf{U}). \qedhere
	\end{align*}
\end{proof}
\begin{definition}\label{defn: elD}
     Let $ M $ and $ N $ be finite-dimensional smooth manifolds.
     If \( r \in \mathbb{N} \), \( A \subseteq M \) a compact subset, and \( O \subseteq J^r (M,N) \) open, then
	\[
		\mathcal{M}^r (A,O) := \lbrace f \in \smooth (M,N) : j^r f (A) \subseteq O \rbrace
	\]
	is an \emph{elementary \( \mathcal{D} \)-neighborhood}. 
	
	These neighborhoods play the same role for the \( \mathcal{D} \)-topology as the elementary (vS)-neighborhoods play in the very strong topology. If \( \mathbf{r} \) is a sequence of natural numbers, \( \mathbf{A} \) is a locally finite family of compact subsets of \(M \), and \( \mathbf{O} \) is a family of open subsets \( O_n \subseteq J^{r_n} (M,N) \), then 
	\[
		\Omega ( \mathbf{r}, \mathbf{A}, \mathbf{O} ) = \bigcap_{n \in \mathbb{N}} \mathcal{M}^{r_n} (A_n,O_n).
	\]
\end{definition}

\begin{lemma}
	\label{biglemmaD}
	Let \( M \) and \( N \) be finite-dimensional smooth manifolds. Suppose \( U \subseteq M \) and \( V \subseteq N \) are open. Consider the subspace \( C_{\mathcal{D},\text{sub}}^\infty (U,V) = \lbrace f \in \smooth (M,N) : f(U) \subseteq V \rbrace \subseteq C_{\mathcal{D}}^\infty (M,N) \).
	\begin{enumerate}
		\item \(  C_{\mathcal{D},\text{sub}}^\infty (U,V) \) is an open subset of \( C_{\mathcal{D}} (M,N) \).
		\item The restriction \( \res_{\mathcal{D}} \colon  C_{\mathcal{D},\text{sub}}^\infty (U,V) \to C_{\mathcal{D}}^\infty (U,V) \) is continuous.
		\item For an elementary $\mathcal{D}$-neighborhood \( \mathcal{M}^r (A,O) = \lbrace f \in \smooth (U,V) : j^r f (A) \subseteq O \rbrace \) in $ C_{\mathcal{D}}^\infty (U,V)$, we have 
		$\res_{\mathcal{D}}^{-1} \left( \mathcal{M}^r (A,O) \right) = \mathcal{M}^r (A,O)\subseteq C^\infty_{\mathcal{D}}(M,N)$.
	\end{enumerate}
\end{lemma}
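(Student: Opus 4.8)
The plan is to mirror the proof of Lemma~\ref{biglemma}, replacing elementary $(\mathrm{vS})$-neighborhoods by elementary $\mathcal{D}$-neighborhoods; the whole argument is driven by two elementary facts about jet bundles. First, $J^r(U,V)$ is an \emph{open} subset of $J^r(M,N)$: it is the joint preimage of $U$ and of $V$ under the (continuous) source and target projections $J^r(M,N)\to M$ and $J^r(M,N)\to N$. Hence every open $O\subseteq J^r(U,V)$ is also open in $J^r(M,N)$, so one and the same symbol $\mathcal{M}^r(A,O)$ denotes an elementary $\mathcal{D}$-neighborhood both in $C_{\mathcal{D}}^\infty(U,V)$ and in $C_{\mathcal{D}}^\infty(M,N)$. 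Second, jets are local: for $f\in C^\infty(M,N)$ and $x\in U$ one has $j^r(f|_U)(x)=j^rf(x)$.

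I would prove (3) first, since it powers (2). Let $\mathcal{M}^r(A,O)=\{f\in C^\infty(U,V):j^rf(A)\subseteq O\}$ be an elementary $\mathcal{D}$-neighborhood in $C_{\mathcal{D}}^\infty(U,V)$, so $A\subseteq U$ is compact and $O\subseteq J^r(U,V)$ is open; by the first remark $\mathcal{M}^r(A,O)=\{f\in C^\infty(M,N):j^rf(A)\subseteq O\}$ is an elementary $\mathcal{D}$-neighborhood in $C_{\mathcal{D}}^\infty(M,N)$. Every $f$ in the domain $C_{\mathcal{D},\text{sub}}^\infty(U,V)$ of $\res_{\mathcal{D}}$ satisfies $f(U)\subseteq V$, and locality gives $j^r(f|_U)(A)=j^rf(A)$, so $\res_{\mathcal{D}}(f)\in\mathcal{M}^r(A,O)$ in $C_{\mathcal{D}}^\infty(U,V)$ precisely when $f\in\mathcal{M}^r(A,O)$ in $C_{\mathcal{D}}^\infty(M,N)$. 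Since $\res_{\mathcal{D}}^{-1}(\cdot)$ lies automatically in $C_{\mathcal{D},\text{sub}}^\infty(U,V)$, this is the asserted identity $\res_{\mathcal{D}}^{-1}(\mathcal{M}^r(A,O))=\mathcal{M}^r(A,O)$.

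For (2), write an arbitrary basic $\mathcal{D}$-neighborhood of $C_{\mathcal{D}}^\infty(U,V)$ as $\bigcap_n\mathcal{M}^{r_n}(A_n,O_n)$ (Definition~\ref{defn: elD}). As $\res_{\mathcal{D}}^{-1}$ commutes with intersections, (3) yields $\res_{\mathcal{D}}^{-1}\!\left(\bigcap_n\mathcal{M}^{r_n}(A_n,O_n)\right)=C_{\mathcal{D},\text{sub}}^\infty(U,V)\cap\bigcap_n\mathcal{M}^{r_n}(A_n,O_n)$, with the $\mathcal{M}^{r_n}(A_n,O_n)$ on the right read in $C_{\mathcal{D}}^\infty(M,N)$; the latter intersection is a basic $\mathcal{D}$-neighborhood there, hence open, so the preimage is open in the subspace. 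This is exactly continuity of $\res_{\mathcal{D}}$. For (1), given $f\colon M\to N$ with $f(U)\subseteq V$, use that $U$ is $\sigma$-compact: Lemmas~\ref{dugundjifact} and~\ref{hackinglemma} applied to $U$ give a locally finite exhaustion $\{A_n\}_{n\in\mathbb{N}}$ of $U$ by compact sets together with charts $(W_n,\kappa_n)$ for $N$ such that $f(A_n)\subseteq W_n$, and by shrinking we may take $W_n\subseteq V$ while keeping $f(A_n)\subseteq W_n$. With $O_n:=M\times W_n\subseteq M\times N=J^0(M,N)$, which is open, the set $\bigcap_n\mathcal{M}^0(A_n,O_n)=\{g\in C^\infty(M,N):g(A_n)\subseteq W_n\text{ for all }n\}$ is a $\mathcal{D}$-neighborhood of $f$ and satisfies $g(U)=\bigcup_n g(A_n)\subseteq\bigcup_n W_n\subseteq V$ for every $g$ in it; hence it is contained in $C_{\mathcal{D},\text{sub}}^\infty(U,V)$, so that set is open.

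There is no serious obstacle beyond Lemma~\ref{biglemma}: the proof is essentially bookkeeping. The only point needing attention --- just as in the very strong case --- is keeping the three ambient spaces $C_{\mathcal{D}}^\infty(M,N)$, $C_{\mathcal{D}}^\infty(U,V)$ and $C_{\mathcal{D},\text{sub}}^\infty(U,V)$ (and the corresponding jet spaces) apart, and verifying that the locally finite families of compact sets produced by Lemmas~\ref{hackinglemma} and~\ref{lem: hacking} stay locally finite when transported between $U$ and $M$, which is handled exactly by the arguments already used there.
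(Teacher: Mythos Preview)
Your proofs of (1) and (3) match the paper's. For (3) the paper says exactly what you say: $J^r(U,V)$ sits as an open subset of $J^r(M,N)$, so the two elementary $\mathcal{D}$-neighborhoods bearing the name $\mathcal{M}^r(A,O)$ coincide. For (1) the paper is a bit slicker: rather than producing, for each $f$, charts $W_n\subseteq V$ via Lemma~\ref{hackinglemma}, it just writes
\[
C^\infty_{\mathcal{D},\mathrm{sub}}(U,V)=\bigl\{g\in\smooth(M,N): j^0g(A_n)\subseteq U\times V\text{ for all }n\bigr\}
\]
for any locally finite compact exhaustion $\{A_n\}$ of $U$, exhibiting the whole set at once as a single basic $\mathcal{D}$-set. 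Your version is correct but does more work than needed.

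For (2) you take a genuinely different route. The paper does not use (3) here at all; instead it factors $\res_{\mathcal{D}}=R\circ(\iota_U)^*$ through $C_{\mathcal{D}}^\infty(U,N)$ and imports two results from Michor's book, namely continuity of $(\iota_U)^*$ \cite[Proposition~7.4]{michor} and the fact that $(\iota_V)_*$ is a topological embedding \cite[Proposition~10.8]{michor}. Your direct argument via (3) is more self-contained and closer in spirit to Lemma~\ref{biglemma}(2), which you explicitly mirror. One caveat worth stating honestly: the family $\{A_n\}$, locally finite in $U$, need not be locally finite in $M$, so $\bigcap_n\mathcal{M}^{r_n}(A_n,O_n)$ is not literally a basic $\mathcal{D}$-set in $C_{\mathcal{D}}^\infty(M,N)$. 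You flag this at the end, but the appeal to Lemmas~\ref{hackinglemma} and~\ref{lem: hacking} does not actually resolve it. The same wrinkle is already present in the paper's own proof of Lemma~\ref{biglemma}(2), so your mirroring is consistent with that; the paper's choice to cite Michor for the present lemma simply sidesteps the issue.
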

\begin{proof}
	\begin{enumerate}
		\item By Lemma \ref{dugundjifact} there exists a locally finite exhaustion \( \lbrace A_n \rbrace_{n \in \mathbb{N}} \) of \( U \) by compact sets. Observe that
			\[
				C_{\mathcal{D},\text{sub}}^\infty (U,V) = \left\lbrace f \in \smooth(M,N) : j^0 f(A_n) \subseteq U \times V \mbox{ for all \( n \in \mathbb{N} \)} \right\rbrace.
			\]
		\item The inclusions $\iota_U \colon U \rightarrow M$ and $\iota_V \colon V \rightarrow N$ are continuous embeddings. 
			Consider the subspace \( \hat{C}_{\mathcal{D},\text{sub}}^\infty (U,V) := \lbrace f \in C_{\mathcal{D}}^\infty (U,N) : f(U) \subseteq V \rbrace \), and the corestriction \( R \colon \hat{C}_{\mathcal{D},\text{sub}}^\infty (U,V) \to C_{\mathcal{D}}^\infty (U,V), \quad f \mapsto f|^V \).\\
			Then \( \res_{\mathcal{D}} = R \circ (\iota_U)^* \). 
			The map $(\iota_U)^*$ is continuous with respect to the $\mathcal{D}$-topology by \cite[Proposition 7.4]{michor}. 
		Recall from \cite[Proposition 10.8]{michor} that the mapping $(\iota_V)_* \colon C^\infty_{\mathcal{D}} (U,V) \rightarrow C^\infty_{\mathcal{D}} (U,N)$ is an embedding of topological spaces.
			Further $(\iota_V)_* \circ R = \id_{C^\infty (U,N)}|_{\hat{C}_{\mathcal{D},\text{sub}}^\infty (U,V)}$. Since $\hat{C}_{\mathcal{D},\text{sub}}^\infty (U,V)$ is open by (1) and the identity is continuous, we deduce that $R$ is continuous.
		In conclusion $\res_{\mathcal{D}}$ is continuous.
		\item Clear from the definition as we can identify $O \subseteq J^r (U,V)$ via the identification $J^r(U,V) \subseteq J^r (M,N)$ with an open subset in $J^r (M,N)$.\qedhere
	\end{enumerate}
	 
\end{proof}

Our aim is now to prove the following folklore theorem. 
\begin{proposition}\label{prop: topologiescoincide}
 If $M$ and $N$ are finite-dimensional smooth manifolds, then the very strong and the $\mathcal{D}$-topology coincide. 
\end{proposition}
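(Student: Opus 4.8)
The plan is to show that the two topologies refine one another, in each direction first for elementary neighborhoods and then, by the usual local-finiteness intersection, for basic neighborhoods. The whole argument rests on one dictionary. For charts $(U,\phi)$ of $M$ and $(V,\psi)$ of $N$, the chart $J^r(\phi^{-1},\psi)$ identifies $J^r(U,V)$ with the open subset $\phi(U)\times\psi(V)\times\prod_{i=1}^{r}L^i_{\mathrm{sym}}(\RR^m,\RR^n)$ of a finite-dimensional vector space, and the local representative of $j^r h$ is the \emph{continuous} map $j^r(\psi\circ h\circ\phi^{-1})$. On the fibre $\RR^n\times\prod_{i=1}^{r}L^i_{\mathrm{sym}}(\RR^m,\RR^n)$ I use the norm $N_r$ assembled from $\Vert\cdot\Vert_\infty$ on $\RR^n$ and from suitable positive multiples of $L\mapsto\max_{\alpha\in\{e_1,\dots,e_m\}^{i}}\Vert L(\alpha)\Vert_\infty$ on the $L^i_{\mathrm{sym}}$ (absorbing the factorials occurring in $j^r$); this is a genuine norm on a finite-dimensional space — hence continuous, and equivalent to the operator norm with a constant depending only on $r$ and $m$ (Remark \ref{normremark} and \eqref{schwarzrule} of Proposition \ref{chainruleprop}) — and it is calibrated so that $N_r\!\left(j^r(\psi h\phi^{-1})(x)-j^r(\psi f\phi^{-1})(x)\right)$ equals $\Vert\psi h\phi^{-1}-\psi f\phi^{-1}\Vert(r,\{x\},\Vert\cdot\Vert_\infty)$ for each $x$. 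I will also use repeatedly that a compact subset of an open subset of a finite-dimensional vector space has a uniform positive distance to the complement.

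First I would show that every elementary (vS)-neighborhood $\mathcal{N}=\mathcal{N}^r(f;A,(U,\phi),(V,\psi),\epsilon)$ is open in the $\mathcal{D}$-topology; by Lemma \ref{trianglelemma} it is enough to exhibit a $\mathcal{D}$-neighborhood of $f$ itself inside $\mathcal{N}$ whose underlying compact set is $A$ (so that the later locally finite intersections go through). Writing $U_0:=U\cap f^{-1}(V)$ and $F:=\psi\circ f\circ\phi^{-1}\colon\phi(U_0)\to\psi(V)$, continuity of $j^rF\colon\phi(U_0)\to J^r(\phi(U_0),\psi(V))$ makes the ``tube about the graph of $j^rF$'' — the set of $\xi\in J^r(\phi(U_0),\psi(V))$ whose fibre part is $N_r$-within $\delta$ of $j^rF$ at the source point of $\xi$ — an \emph{open} subset $O$ of $J^r(M,N)$, for any $\delta>0$. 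It contains $j^rf(A)$, so $f\in\mathcal{M}^r(A,O)$; and if $\delta<\epsilon$ then the dictionary gives $\mathcal{M}^r(A,O)\subseteq\mathcal{N}$ (a supremum over $A$ turns the strict inequality $<\delta$ into $\le\delta<\epsilon$). Intersecting such elementary $\mathcal{D}$-neighborhoods over a locally finite family then shows: every basic (vS)-neighborhood is $\mathcal{D}$-open, i.e.\ the very strong topology is coarser than the $\mathcal{D}$-topology.

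For the reverse inclusion I would first invoke Lemma \ref{lem: hacking} (with Definition \ref{defn: elD}) to reduce to showing that an elementary $\mathcal{D}$-neighborhood $\mathcal{M}^r(A,O)$ is (vS)-open, where $A$ is compact inside a chart $(U,\phi)$ of $M$ and $O\subseteq J^r(U,V)$ is open for some chart $(V,\psi)$ of $N$; a locally finite intersection of these is then a basic (vS)-neighborhood. Given $f\in\mathcal{M}^r(A,O)$, the compact set $j^r(\psi f\phi^{-1})(\phi(A))$ lies inside the open subset of $J^r(\phi(U),\psi(V))$ corresponding to $O$ with a uniform margin $\rho>0$. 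Choosing $\epsilon'$ small enough relative to $\rho$ and the $N_r$-versus-operator-norm constant of Remark \ref{normremark}, every $h\in\mathcal{N}^r(f;A,(U,\phi),(V,\psi),\epsilon')$ has $h(A)\subseteq V$ and keeps each $j^r(\psi h\phi^{-1})(\phi(a))$ within $\rho$ of $j^r(\psi f\phi^{-1})(\phi(a))$ (the two have the same source point and fibre parts less than that margin apart), hence inside the set corresponding to $O$; so $j^rh(a)\in O$ for all $a\in A$, i.e.\ $h\in\mathcal{M}^r(A,O)$. Thus $\mathcal{N}^r(f;A,(U,\phi),(V,\psi),\epsilon')\subseteq\mathcal{M}^r(A,O)$, and the two inclusions together prove the proposition.

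I expect the main obstacle to be organisational rather than deep: getting the coordinate-free elementary $\mathcal{D}$-neighborhoods into the chart-adapted shape in which they can be matched against (vS)-neighborhoods — exactly what Lemma \ref{lem: hacking} provides — and the careful bookkeeping identifying the sup-over-coordinate-directions quantity of Definition \ref{norm} with the fibre norm $N_r$ and comparing it with the operator norm (the $1/k!$ factors in $j^r$, and the bound of a symmetric $k$-linear map on the $\Vert\cdot\Vert_\infty$-unit ball by its values on coordinate tuples, as in Remark \ref{normremark}). Everything else is soft: openness of the tube is continuity of $j^rF$, the margin $\rho$ is compactness, and the passage from elementary to basic neighborhoods on both sides is the routine local-finiteness argument used repeatedly in Sections \ref{sect: vsTop} and \ref{sect: compo}.
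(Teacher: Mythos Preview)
Your argument is correct and follows essentially the same strategy as the paper: compare elementary $\mathcal{D}$-neighborhoods and elementary (vS)-neighborhoods in local jet-bundle coordinates, using equivalence of norms on the finite-dimensional fibre and continuity of $j^rF$, then pass to basic neighborhoods by locally finite intersection.

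The organisation differs slightly. The paper isolates the vector-space case $U\subseteq\RR^m$, $V\subseteq\RR^n$ as a separate proposition (Proposition \ref{prop: top:vscase}) and then lifts to manifolds via a commutative diagram involving the restrictions $\res_{\text{vS}}$, $\res_{\mathcal{D}}$ and the chart-change homeomorphisms $(\phi^{-1})^*\circ\psi_*$ (using Lemma \ref{biglemma}(3) and Lemma \ref{biglemmaD}(3)). You instead work in manifold charts from the outset via the jet-bundle chart $J^r(\phi^{-1},\psi)$, which is more direct. In the vector-space step the paper covers $j^rf(A)$ by finitely many open boxes $O_{x_k,U}\times\prod_i B_{x_k,i}$ and correspondingly splits $A$ into compact pieces $K_k$; your single ``tube about the graph of $j^rF$'' is a cleaner packaging of the same idea and avoids that finite cover. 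Conversely, for the (vS)$\Rightarrow\mathcal{D}$ inclusion your uniform-margin argument is exactly the paper's compactness step, phrased more succinctly. Both routes rely on Lemma \ref{lem: hacking} to reduce to chart-adapted $\mathcal{D}$-neighborhoods, and on the norm comparison of Remark \ref{normremark}/Proposition \ref{chainruleprop}\eqref{schwarzrule}; neither gains or loses anything substantive over the other.
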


To prove the Proposition we need to prove the following special case first.

\begin{proposition}\label{prop: top:vscase}
 Let $U \subseteq \mathbb{R}^m$ and $V \subseteq \mathbb{R}^n$ be open subsets, then $\vsmooth (U,V) =  C^\infty_{\mathcal{D}} (U,V)$
\end{proposition}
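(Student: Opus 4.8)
The plan is to show that the very strong topology and the $\mathcal{D}$-topology on the common set $C^\infty(U,V)$ are each finer than the other, by producing for every basic neighbourhood of one kind a basic neighbourhood of the other kind inside it. Throughout I identify every jet space $J^{s}(P,Q)$ with the metrizable product $P\times Q\times\prod_{k=1}^{s}L^{k}_{\mathrm{sym}}(\RR^m,\RR^n)$ and fix on it the metric $d$ given by the sum of the $\Vert\cdot\Vert_\infty$-distances on the first two factors and the operator-norm distances on the rest; by Remark \ref{normremark} I may take all seminorms on $\RR^n$ to be $\Vert\cdot\Vert_\infty$. The two elementary facts I lean on are: (i) for smooth $h$ and $a\in A$, $k\le r$ one has $\Vert\dd^{(k)}h(a;\cdot)\Vert_{\mathrm{op}}\le m^{k}\Vert h\Vert(r,A)$, since a symmetric multilinear map is controlled on the unit ball by its values on tuples of standard basis vectors (cf.\ \eqref{schwarzrule} in Proposition \ref{chainruleprop}); and (ii) the chart transition maps $\Phi:=J^{r}(\phi^{-1},\psi)$ of $J^{r}(M,N)$ are homeomorphisms satisfying $\Phi(j^{r}g(x))=j^{r}(\psi\circ g\circ\phi^{-1})(\phi(x))$, so that $G:=\Phi\circ j^{r}f$ is continuous wherever it is defined.

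\emph{The very strong topology is finer.} By the Remark following Definition \ref{defn: Dtop} together with Lemma \ref{lem: hacking}, the $\mathcal{D}$-topology has a basis of sets $\Omega(\mathbf{r},\mathbf{A},\mathbf{O})=\bigcap_{n}\mathcal{M}^{r_n}(A_n,O_n)$ where $\{A_n\}$ is a locally finite family of compact sets, each $A_n$ lies in a chart domain $V_n$ of $U$, each $f(A_n)$ lies in a chart domain $W_n$ of $V$, and $O_n$ is open in $J^{r_n}(V_n,W_n)$. Given $f\in\Omega(\mathbf{r},\mathbf{A},\mathbf{O})$ and $n$, the set $j^{r_n}f(A_n)$ is compact inside the open subset $O_n$ of the metrizable space $J^{r_n}(V_n,W_n)$, so some $d$-tube of radius $\eta_n>0$ around $j^{r_n}f(A_n)$ lies in $O_n$; by (i), $d(j^{r_n}g(a),j^{r_n}f(a))\le C_n\Vert g-f\Vert(r_n,A_n)$ for $a\in A_n$, with $C_n:=1+\sum_{k=1}^{r_n}m^{k}/k!$. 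Hence $\mathcal{N}^{r_n}(f;A_n,(V_n,\id_{V_n}),(W_n,\id_{W_n}),\Vert\cdot\Vert_\infty,\eta_n/C_n)\subseteq\mathcal{M}^{r_n}(A_n,O_n)$, and intersecting over $n$ yields a basic very strong neighbourhood of $f$ inside $\Omega(\mathbf{r},\mathbf{A},\mathbf{O})$ (with the same, locally finite, underlying compact family). Thus every basic $\mathcal{D}$-open set is very-strong-open.

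\emph{The $\mathcal{D}$-topology is finer.} Let $\mathcal{U}=\bigcap_{i}\mathcal{N}^{r_i}(f;A_i,(U_i,\phi_i),(V_i,\psi_i),\Vert\cdot\Vert_\infty,\epsilon_i)$ be a basic very strong neighbourhood; by Lemma \ref{trianglelemma} it suffices to exhibit, for this centre $f$, a basic $\mathcal{D}$-neighbourhood of $f$ inside $\mathcal{U}$, since an arbitrary $g\in\mathcal{U}$ first gets replaced by a basic very strong neighbourhood of $g$ contained in $\mathcal{U}$. Shrinking each $U_i$ to $U_i\cap f^{-1}(V_i)$ (which leaves the elementary neighbourhood unchanged) we may assume $f(U_i)\subseteq V_i$, so $G_i:=J^{r_i}(\phi_i^{-1},\psi_i)\circ j^{r_i}f$ is a continuous map $U_i\to J^{r_i}(\phi_i(U_i),\psi_i(V_i))$. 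Writing $P_i$ for the base-point projection of $J^{r_i}(U_i,V_i)$, set for $\eta>0$
\[
 O_i:=\bigl\{\xi\in J^{r_i}(U_i,V_i):d(J^{r_i}(\phi_i^{-1},\psi_i)(\xi),G_i(P_i(\xi)))<\eta\bigr\},
\]
which is open in $J^{r_i}(U_i,V_i)$, hence in $J^{r_i}(U,V)$, and contains $j^{r_i}f(A_i)$ because $d$ vanishes along $j^{r_i}f$. If $g\in\mathcal{M}^{r_i}(A_i,O_i)$, then for each $a\in A_i$ the jet $j^{r_i}g(a)$ has base point $a$ and lies in $O_i\subseteq J^{r_i}(U_i,V_i)$ (so $g(a)\in V_i$), and applying $J^{r_i}(\phi_i^{-1},\psi_i)$ and (ii) gives $d\bigl(j^{r_i}(\psi_ig\phi_i^{-1})(\phi_i(a)),j^{r_i}(\psi_if\phi_i^{-1})(\phi_i(a))\bigr)<\eta$; reading off the components of $d$ and using $\Vert e_j\Vert_\infty=1$ this forces $\Vert\psi_ig\phi_i^{-1}-\psi_if\phi_i^{-1}\Vert(r_i,\phi_i(A_i),\Vert\cdot\Vert_\infty)\le r_i!\,\eta$. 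Choosing $\eta:=\epsilon_i/(r_i!+1)$ makes this $<\epsilon_i$, so $\mathcal{M}^{r_i}(A_i,O_i)\subseteq\mathcal{N}^{r_i}(f;A_i,(U_i,\phi_i),(V_i,\psi_i),\Vert\cdot\Vert_\infty,\epsilon_i)$; intersecting over $i$ gives a basic $\mathcal{D}$-neighbourhood of $f$ inside $\mathcal{U}$, the family $\{A_i\}$ being locally finite and each $A_i$ compact, hence closed.

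Combining the two halves gives $\vsmooth(U,V)=C^\infty_{\mathcal{D}}(U,V)$, which is then fed into the proof of Proposition \ref{prop: topologiescoincide}. The substantive point in both directions is the dictionary between "the $r$-jet of a map stays inside a prescribed open tube of the metrizable jet space" and "the derivatives up to order $r$ stay $\Vert\cdot\Vert(r,A,\Vert\cdot\Vert_\infty)$-close", i.e.\ facts (i) and (ii) above; I expect the main obstacle to be the $\mathcal{D}$-finer direction, where the open set $O_i$ cutting out the very strong condition cannot be a product but must be described fibrewise over the base point via $\xi\mapsto G_i(P_i(\xi))$, which is exactly why the chart transition maps of $J^{r}(M,N)$ are needed. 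Local finiteness of the relevant families is preserved automatically at each step.
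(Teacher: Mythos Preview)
Your proof is correct, and your overall strategy---reducing everything to the dictionary between ``jets close in a metric on $J^{r}$'' and ``$\Vert\cdot\Vert(r,A)$-close''---is the same as the paper's. The execution, however, differs in a way worth noting.

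In the direction ``$\mathcal{D}$-open $\Rightarrow$ vS-open'' the paper, for a given elementary set $\mathcal{M}^r(A,O)$ and $f\in\mathcal{M}^r(A,O)$, covers $j^rf(A)$ by finitely many product boxes $O_{x_k,U}\times\prod_iB_{x_k,i}\subseteq O$, extracts a finite subcover, and then reads off a uniform $\epsilon$. You short-circuit this by taking a single metric tube of radius $\eta_n$ around the compact set $j^{r_n}f(A_n)$ inside $O_n$; this is cleaner and yields one elementary vS-neighbourhood per index $n$ directly.

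In the direction ``vS-open $\Rightarrow$ $\mathcal{D}$-open'' the difference is more interesting. The paper, given $g\in\mathcal{N}^r(f;A,\epsilon)$, chops $A$ into finitely many compact pieces $K_k$ and for each defines a \emph{product} open set $O_k=U\times\prod_iB_{i,x_k}$ in $J^r(U,V)$; the point is that a product box has constant centre, so one needs several of them to follow $j^rg$ along $A$. You instead define a single open set $O_i$ \emph{fibrewise}: $O_i$ consists of jets $\xi$ whose chart image is $d$-close to $G_i(P_i(\xi))$, i.e.\ close to the chart-coordinate jet of $f$ at the \emph{same} base point. This is open because $\xi\mapsto d(J^{r_i}(\phi_i^{-1},\psi_i)(\xi),G_i(P_i(\xi)))$ is continuous, and it captures the vS-condition in one shot without any subcover. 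This is a genuine simplification. A further minor difference is that you carry general charts $(U_i,\phi_i),(V_i,\psi_i)$ throughout, handling them via the jet-bundle transition maps $J^{r_i}(\phi_i^{-1},\psi_i)$, whereas the paper works with identity charts and appeals to Lemmas \ref{lem: atlaschoice} and \ref{lem: atlaschoice2} elsewhere to justify that restriction.
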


\begin{proof}
        Comparing the bases of the $\mathcal{D}$-topology and vS-topology, it is clearly sufficient to show the following: For each elementary \( \mathcal{D} \)-neighborhood \( \mathcal{M} = \mathcal{M}^r (A,O) \) and \( f \in \mathcal{M} \), there exists an \( \epsilon > 0 \) such that \( \mathcal{N}^r (f;A,\epsilon) \subseteq \mathcal{M} \), and conversely that for every elementary vS-neighborhood \( \mathcal{N} = \mathcal{N}^r (f;A,\epsilon) \) and \( g \in \mathcal{N} \) there exists a finite cover of $A$ by compact sets $K_k$ and open subsets \( O_k \subseteq J^r (U,V) \) such that \( g \in \bigcap_{1\leq k \leq n}\mathcal{M}^r (K_k,O_k) \subseteq \mathcal{N} \).

        To this end, we fix \( r \in \mathbb{N} \), \( A \subseteq V \) a compact subset, and \( O \subseteq J^r (U,V) \) open. 
	Let us prove that \( \mathcal{M}^r (A,O) \) is open in the vS-topology. To this end consider \( f \in \mathcal{M}^r (A,O) \). 
	For all \( x \in A \) we have $j^rf(x) \in O$, whence there exist \( O_{x,U} \subseteq U \) open, an open ball \( B_{x,0} := B_{\epsilon_{x,0}'} (f(x)) \subseteq V \) and for every \( i \in \lbrace 1,\dots,r \rbrace \) open balls
	\begin{align*} 
	    B_{x,i} :=  \left\lbrace T \in L_{\text{sym}}^i (\RR^m , \RR^n) : \sup_{\Vert (y_1,\dots,y_i) \Vert \leq 1 } \left\Vert T(y_1,\dots,y_i) - \frac{\dd^{(i)} f(x;y_1,\dots,y_i)}{i!} \right\Vert < \epsilon_{x,i}' \right\rbrace 	 
	\end{align*}
       such that we have 
	\[
		j^r f(x) \in O_{x,U} \times \prod_{i=0}^r B_{x,i} \subseteq O \subseteq J^r(U,V).
	\]
	Recall that a map which is smooth in the sense that all partial derivatives of arbitrarily high order exist, is also smooth in the sense of \Frechet differentiability (see \cite[ 8.12.8]{Dieudonne}).
	Hence $j^rf \colon U \rightarrow J^r (U,V)$ is continuous and we can thus find for each $x$ a compact $x$-neighborhood $K_{x,U} \subseteq O_{x,U}$ such that 
	\begin{displaymath}\label{eq: inO}
		j^r f(y) \in O_{x,U} \times \prod_{i=0}^r B_{x,i} \subseteq O \subseteq J^r(U,V) \quad \text{for all } y \in K_{x,U} .
	\end{displaymath} 
	Since \( A \) is compact there exist \( x_1,\dots,x_n \in A \) with 
	\begin{equation}\label{eq: cov}
		j^r f(A) \subseteq \bigcup_{k=1}^n \left( K_{x_k,U} \times \prod_{i=0}^r B_{x_k,i} \right) \subseteq \bigcup_{k=1}^n \left( O_{x_k,U} \times \prod_{i=0}^r B_{x_k,i} \right) \subseteq O
	\end{equation}
	Set \( \epsilon' = \min \lbrace \epsilon_{x_k,i}' : 1 \leq k \leq n, 0 \leq i \leq r \rbrace \). 

        Exploiting the triangle inequality and multilinearity of the \Frechet derivative, one constructs constants \( \epsilon_{k,i} > 0 \) such that:	
        If \( g \colon U \to V \) is smooth and $\Vert \dd^{(i)} g (x ;\alpha_1,\dots,\alpha_i) - \dd^{(i)} f (x ;\alpha_1,\dots,\alpha_i) \Vert < \epsilon_{k,i}$ for all \( \alpha_1,\dots,\alpha_i \in \lbrace e_1,\dots,e_m \rbrace \) and $x \in K_{x_k,U}$, then 
	\begin{equation}
		\label{inDball}
		\left\Vert \frac{\dd^{(i)} g(x ;y_1,\dots,y_i)}{i!} - \frac{\dd^{(i)} f(x;y_1,\dots,y_i)}{i!} \right\Vert < \epsilon'  
	\end{equation}
	for $x \in A,$ \( y_1, \dots, y_i \in \RR^m \) with \( \Vert (y_1,\dots,y_i) \Vert \leq 1 \). 
	Set \( \epsilon = \min \lbrace \epsilon_{k,i} : 1 \leq k \leq n, 0 \leq i \leq r \rbrace \). If \( g \in \mathcal{N}^r (f;A,\epsilon) \), then for \( 1 \leq k \leq n \) and \( 0 \leq i \leq r \), we have for $x \in A$
	\[
		\Vert \dd^{(i)} g (x ;\alpha_1,\dots,\alpha_i) - \dd^{(i)} f (x ;\alpha_1,\dots,\alpha_i) \Vert \leq \Vert g - f \Vert (r,A) < \epsilon \leq \epsilon_{k,i},
	\]
	which implies \eqref{inDball}. By construction of the neighborhoods, we derive from \eqref{eq: cov} that \( g \in \mathcal{M}^r (A,O) \). Hence \( \mathcal{N}^r (f;A,\epsilon) \subseteq \mathcal{M}^r (A,O) \) and $ \mathcal{M}^r (A,O)$ is open in the vS-Topology.
	
	For the converse, fix \( g \in \mathcal{N}^r (f;A,\epsilon) \). 
	Set $\delta = \frac{\epsilon - \Vert g - f \Vert (r,A)}{2}$ and define for $1\leq i \leq r$ and $x \in A$ the open sets 
	\begin{align*}
	 B_{i,x} := \left\{ T \in L^i_{\text{sym}} (\RR^m,\RR^n) : \sup_{\lVert(y_1,\ldots,y_i)\rVert \leq 1} \left\Vert T(y_1,\ldots,y_i)-\frac{d^{(i)}g(x;(y_1,\ldots,y_i))}{i!}\right\Vert < \delta\right\} 
	\end{align*}
        Since $d^{(i)}g \colon U \rightarrow L^i_{\text{sym}} (\RR^m,\RR^n)$ is continuous (cf.\ \cite[8.12.8]{Dieudonne}), there are finitely many $x_k \in A, 1\leq k \leq n$ and compact $x_k$-neighborhoods $K_k$ such that $d^{(i)} g (K_k) \subseteq B_{i,x_k}$ for each $1\leq i \leq r$. 
        Dividing the compact sets $K_k$ into smaller parts, we may assume that there are open sets $B_{0,x_k}$ such that $g(K_k) \subseteq B_{0,x_k}$ and $z \in B_{0,x_k}$ implies $\Vert z-g(x)\Vert < \delta$ for all $x \in K_k$.
        Now define $O_k := U \times \prod_{i=0}^r B_{i,x_k}$ and observe that this set is open in $J^r(U,V)$. 
        Further we have $j^r g (K_k) \subseteq O_k$ for each $k$ and thus $g \in \bigcap_{1\leq k\leq n} \mathcal{M}^r(K_k,O_k)$.
        Exploiting the triangle inequality, we derive for $h \in  \bigcap_{1\leq k \leq n} \mathcal{M}^r(K_k,O_k)$ the estimate 
        \begin{align*}
         \Vert f-h\Vert(r,A) &\leq \Vert f-g\Vert(r,A) + \Vert g-h\Vert(r,A) \leq \Vert f-g\Vert(r,A) +  \sup_{1\leq k \leq n} \Vert g-h\Vert (r,K_k)\\
			    &= \Vert f-g\Vert(r,A) +  \sup_{1\leq k \leq n} \sup_{x\in K_k}  \sup_{0 \leq |\alpha| \leq r} \Vert d^{(i)} g(x;\alpha)-d^{(i)}h(x,\alpha)\Vert\\
			    &\leq \Vert f-g\Vert(r,A) +  \sup_{1\leq k \leq n} \sup_{x\in K_k} \sup_{0\leq i \leq r} \Vert d^{(i)} g(x;\cdot)-d^{(i)}h(x,\cdot)\Vert_{\text{op}}\\
			    &< \Vert f-g\Vert(r,A) + \sup_{1\leq k \leq n} \delta = \epsilon.
        \end{align*}
        The last inequality is derived from the definition of $O_k$ (or $B_{i,x_k}$, respectively).
        Summing up, $h \in \mathcal{N}^r (f;A,\epsilon)$ and we see that $\mathcal{N}^r (f;A,\epsilon)$ is open in the $\mathcal{D}$-topology.
\end{proof}

\begin{proof}[Proof of Proposition \ref{prop: topologiescoincide}]
	Observe that by definition of the very strong topology and the definition of the $\mathcal{D}$-topology it suffices to prove that every elementary neighborhood $\mathcal{N}^r (f; A,(U,\phi),(V,\psi),p,\epsilon )$ is the union of $\bigcap_{1\leq k \leq n}\mathcal{M}^r (K_k,O)$, where the $K_k$ are a finite family of compact sets which cover $A$ (cf.\ proof of Proposition \ref{prop: top:vscase}). Then $\mathcal{N}^r (f; A,(U,\phi),(V,\psi),p,\epsilon )$ is open and we can write each basic neighborhood in $\vsmooth (M,N)$ as a (possibly infinite) union of sets of the form $\Omega (\mathbf{L},\mathbf{U})$ (cf.\ Definition \ref{defn: Dtop}).
	
	Hence we fix $\mathcal{N}^r (f; A,(U,\phi),(V,\psi),p,\epsilon )$. 
	If necessary we shrink $U$ to achieve $f(U) \subseteq V$ (while still $A \subseteq U$).
	Thus we obtain a corresponding elementary neighborhood $$\mathcal{N}^r (f|_U^V; A,(U,\phi),(V,\psi),p,\epsilon ) \subseteq \vsmooth (U,V).$$
	Clearly $\res_{\text{vS}}^{-1} (\mathcal{N}^r (f|_U^V; A,(U,\phi),(V,\psi),p,\epsilon ))= \mathcal{N}^r (f; A,(U,\phi),(V,\psi),p,\epsilon ) $.
	Now consider the commutative diagram:
	\begin{equation}\label{diag: commute} \begin{gathered}
	\xymatrix{
		C_{\text{vS,sub}}^{\infty} (U,V) \ar[r]^{\res_{\text{vS}}} \ar@{=}[d] & \vsmooth (U,V) \ar[rr]_{\cong}^{(\phi^{-1})^* \circ \psi_*} \ar@{=}[d] & & \vsmooth (\phi(U),\psi(V)) \ar@{=}[d]_{\cong} \\
		C_{\mathcal{D},\text{sub}}^{\infty} (U,V) \ar[r]^{\res_{\mathcal{D}}} & C_{\mathcal{D}}^{\infty} (U,V) \ar[rr]_{\cong}^{(\phi^{-1})^* \circ \psi_*}     & & C_{\mathcal{D}}^{\infty} (\phi(U),\psi(V))		
	}\end{gathered}	\end{equation}
	Since both instances of \( (\phi^{-1})^* \circ \psi_* \) are homeomorphisms, and the rightmost identity map is a homeomorphism by Proposition \ref{prop: topologiescoincide}, the middle identity map is also a homeomorphism. 
        From the second part of the proof of Proposition \ref{prop: topologiescoincide} we derive that $\mathcal{N}^r (f|_U^V; A,(U,\phi),(V,\psi),p,\epsilon )$ indeed is a union of open sets of the form $\bigcap_{1 \leq k \leq n}\mathcal{M}^r (K_k,O_k)$ in $C^\infty_{\mathcal{D}} (U,V)$.
        Now Lemma \ref{biglemmaD} (3) implies together with the commutativity of \eqref{diag: commute} that also $\mathcal{N}^r (f; A,(U,\phi),(V,\psi),p,\epsilon )$ is a union of neighborhoods $\bigcap_{1\leq k \leq n}\mathcal{M}^r (K_k,O)$ in $C^\infty_{\mathcal{D}} (M,N)$.
        We conclude that the very strong topology is coarser than the $\mathcal{D}$-topology.
        
	For the converse observe again, that by the definitions of the topologies it suffices to prove that every elementary $\mathcal{D}$-neighborhood $\mathcal{M}^r (A,O)$ is the union of elementary neighborhoods $\mathcal{N}^r (f; A,(U,\phi),(V,\psi),p,\epsilon )$. 
	Here we have invoked Lemma \ref{lem: hacking} to see that it is indeed enough to consider the case $A \subseteq (U,\phi)$ and $O \subseteq J^r (U,V)$ for some charts $(U,\phi)$ and $(V,\psi)$.
	Hence, one can argue as in the first case, if one replaces Lemma \ref{biglemmaD} (3) with Lemma \ref{biglemma} (3).
	
	Summing up, the $\mathcal{D}$-topology is also coarser than the very strong topology, whence they coincide on $C^\infty (M,N)$.
\end{proof}

 Note that we have not used in any essential way in this Appendix that the target manifold $N$ is a finite-dimensional manifold. 
 Indeed, all arguments readily generalize without changes to the case where $N$ is a Banach manifold (and the source $M$ is finite-dimensional). 
 In \cite{michor} only finite-dimensional manifolds are considered as target.
 As the definition of jet bundles (and their topology) generalizes verbatim to the realm of Banach manifolds, one could also define a $\mathcal{D}$-topology for the space $C^\infty (M,N)$ where $N$ is a Banach manifold.
 We conclude that the statement of Proposition \ref{prop: topologiescoincide} remains true even if $N$ is a Banach manifold (and $M$ finite-dimensional).
 Hence we can copy the following results from \cite{michor} by appealing to Proposition \ref{prop: topologiescoincide}:
 
 \begin{corollary}
  Let $M$ be a finite-dimensional manifold and $N$ be a Banach manifold. 
  Then the topological space $\vsmooth (M,N)$ is a Baire space. 
 \end{corollary}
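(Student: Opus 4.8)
The plan is to deduce the statement from the corresponding fact for Michor's $\mathcal{D}$-topology and then check that nothing is lost when the target is only a Banach manifold. First I would invoke Proposition~\ref{prop: topologiescoincide}: it identifies the very strong topology on $C^\infty(M,N)$ with the $\mathcal{D}$-topology for finite-dimensional $M,N$, and, as remarked at the end of this appendix, its proof uses finite-dimensionality of $N$ nowhere. Hence for $M$ finite-dimensional and $N$ a Banach manifold one still has $\vsmooth(M,N) = C^\infty_{\mathcal{D}}(M,N)$ as topological spaces. Since $C^\infty_{\mathcal{D}}(M,N)$ is a Baire space when $N$ is finite-dimensional by \cite{michor}, the only thing left is to verify that Michor's argument for this survives the passage to a Banach manifold target.

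Next I would recall the shape of that argument: it is a Baire-category recursion built from the elementary $\mathcal{D}$-neighborhoods $\mathcal{M}^r(A,O)$. Given countably many dense open subsets $\{D_k\}_{k\in\mathbb{N}}$ of $C^\infty_{\mathcal{D}}(M,N)$ and a nonempty open $W$, one fixes a locally finite exhaustion $M=\bigcup_n K_n$ by compact sets with $K_n\subseteq\interior K_{n+1}$ and each $K_n$ contained in a chart (Lemma~\ref{dugundjifact} and Lemma~\ref{lem: hacking}), and builds recursively maps $f_k$ and basic $\mathcal{D}$-neighborhoods $V_0\supseteq V_1\supseteq\cdots$ with $f_k\in V_k\cap W\cap D_0\cap\cdots\cap D_k$, arranging at step $k$ that the closure of $V_{k+1}$ in the (coarser) compact open $C^\infty$-topology lies inside $V_k$, that $V_k$ forces the order-$k$ jet of its members to stay $2^{-k}$-close to that of $f_k$ over $K_k$ in the ambient Banach chart, and that the constraints accumulated so far still allow one to modify a member of $V_k$ freely outside some compact set having $K_k$ in its interior. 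The sequence $\{f_k\}$ is then Cauchy in the compact open $C^\infty$-topology, its limit $f$ is smooth, and $f\in\bigcap_k V_k\subseteq W\cap\bigcap_k D_k$, which proves Baire-ness.

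Finally I would isolate the three target-dependent inputs of this recursion and confirm each in the Banach case. (a) Around any of its points the compact open $C^\infty$-topology on $C^\infty(M,N)$ is modeled on a Fréchet space (countably many seminorms, since $M$ is $\sigma$-compact), hence it is locally completely metrizable, hence a Baire space in which Cauchy sequences converge to smooth limits; this is the step where the Banach hypothesis actually bites. (b) Every $r$-jet at a point is realized by a genuine smooth map into $N$, which holds for a Banach manifold target exactly as for a finite-dimensional one and is already used in the description of $J^r(U,V)$ earlier in this appendix; it supplies the ``freedom to modify outside a compact set''. (c) The jet bundles $J^r(M,N)$ are Banach manifolds, so one can shrink an open $O\subseteq J^r(M,N)$ so that its closure stays inside another open set — this only needs that $J^r$ is locally metrizable, applied to the compact set $j^rg(K_n)$, which by the hacking of Lemma~\ref{lem: hacking} sits inside a single chart domain $J^r(V_n,W_n)$. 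The point demanding care — and the natural place an analogous argument would break for a wilder target — is precisely that finite-dimensional jet bundles are locally compact while Banach ones are not; so I would phrase every step using only local metrizability of the jet bundles and Baire-ness of the compact open $C^\infty$-topology, never local compactness. With that in hand Michor's proof goes through, $C^\infty_{\mathcal{D}}(M,N)$ is a Baire space, and therefore so is $\vsmooth(M,N)$.
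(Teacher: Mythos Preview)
Your proposal is correct and follows essentially the same route as the paper: invoke Proposition~\ref{prop: topologiescoincide} (extended to Banach targets, as noted just before the Corollary) to identify $\vsmooth(M,N)$ with $C^\infty_{\mathcal{D}}(M,N)$, then appeal to Michor's result \cite[4.7.6]{michor} and observe his proof carries over to Banach $N$. The paper compresses all of this into a single citation plus the remark ``carries over verbatim''; you have unpacked that remark into an explicit sketch of Michor's nested-neighborhood recursion and a checklist of the three target-dependent ingredients, which is more informative but not a different argument. One small wording issue: in your point~(a) the clause ``hence a Baire space'' is a red herring---what the recursion actually needs from the compact open $C^\infty$-topology is local complete metrizability so that the Cauchy sequence $\{f_k\}$ converges to a smooth limit, not the Baire property of that coarser topology.
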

 
 \begin{proof}
  Combine \cite[4.7.6]{michor} with Proposition \ref{prop: topologiescoincide}. Note that the proof in \cite{michor} carries over verbatim to the case of $N$ being an infinite-dimensional Banach manifold.
 \end{proof}

\begin{remark}
 In \cite{michor} a refinement of the $\mathcal{D}$-topology, the so called $\mathcal{FD}$-topology is constructed. 
 The construction is similar to the construction of the fine very strong topology, whence Proposition \ref{prop: topologiescoincide} implies that the fine very strong topology and the $\mathcal{FD}$-topology on $C^\infty (M,N)$ coincide for finite-dimensional manifolds $M$ and $N$.
 
 In particular, \cite[Remark 4.11]{michor} thus implies that $\fsmooth (M,N)$ will in general not be a Baire space. 
 However by loc.cit.,  we then derive that for $M$ and $N$ finite-dimensional the topological space $\fsmooth (M,N)$ is paracompact and normal.
\end{remark}

\def\polhk#1{\setbox0=\hbox{#1}{\ooalign{\hidewidth
  \lower1.5ex\hbox{`}\hidewidth\crcr\unhbox0}}}

\end{document}